\documentclass[10pt]{amsart}

\usepackage[all]{xy}
\usepackage{appendix}

\addtolength{\voffset}{-.3in}
\addtolength{\textheight}{.4in}

\setlength{\textwidth}{6.5in}
\setlength{\hoffset}{-.75in}

\usepackage{mathpazo}

\usepackage{amsmath, amssymb, amsthm, latexsym, amscd}

\numberwithin{equation}{section}

\newtheorem{theorem}[subsection]{Theorem}
\newtheorem{prop}[subsection]{Proposition}
\newtheorem{lemma}[subsection]{Lemma}
\newtheorem{corollary}[subsection]{Corollary}

\theoremstyle{definition}
\newtheorem{define}[subsection]{Definition}

\newtheorem{examples}[subsection]{Examples}
\newtheorem{remark}[subsection]{Remark}

\title{Vertex operator algebras associated to modified regular representations of the Virasoro algebra}

\author{Igor Frenkel, Minxian Zhu}
\thanks{${}^*$The first author was partially supported by the NSF grant DMS-04574444.}

\address{Department of Mathematics, 10 Hillhouse Ave, Yale University, New Haven, CT 06520}
\email{frenkel@math.yale.edu}

\address{Department of Mathematics, 110 Frelinghuysen Rd, Rutgers University, Piscataway, NJ 08854}
\email{minxian@math.rutgers.edu}

\begin{document}
\maketitle

\begin{abstract}
We give an abstract construction, 
based on the Belavin-Polyakov-Zamolodchikov equations, 
of a family of vertex operator algebras 
of rank $26$
associated to the modified regular representations of the Virasoro algebra. 
The vertex operators are obtained from the 
products of intertwining operators for a pair of 
Virasoro algebras.
We explicitly determine the structure coefficients
that yield the axioms of VOAs.
In the process of our construction, 
we obtain new hypergeometric identities. 
\end{abstract}

\tableofcontents

\section{Introduction}

Two classes of vertex operator algebras associated to vacuum representations of affine and Virasoro algebras were constructed and studied in [FZ]. 
The regular representations of these vertex operator algebras yield well-known examples of two-dimensional conformal field theories
(see e.g. [DF]), 
but do not possess the structure of vertex operator algebras. 
It was shown in [FS] that one can modify this regular representation in a way that produces a VOA structure.
The construction in [FS] was based on the free bosonic realization and was explicitly verified only for the affine Lie algebra $\widehat {sl_2}$ and the Virasoro algebra. 
The VOA structure for a general affine Lie algebra was obtained in [Z] using the abstract theory of intertwining operators and the Knizhnik-Zamolodchikov equations for the correlation functions. 

In the present paper, 
we use a similar approach, 
now based on the Belavin-Polyakov-Zamolodchikov equations, 
to give another construction of the vertex operator algebras associated to the modified regular representations of the Virasoro algebra. 
We also explicitly compute the structure coefficients of this class of vertex operator algebras. 
The construction uses a pair of the Virasoro algebras $\text{Vir}_c, \text{Vir}_{\bar c}$ with central charges 
\begin{eqnarray}
c = 13 - 6 \varkappa - 6 \varkappa^{-1}, & &
\bar c = 13 + 6 \varkappa + 6 \varkappa^{-1}
\end{eqnarray}
for $\varkappa \in \mathbb C \backslash \mathbb Q$, 
so that the rank of our vertex operator algebras is always equal to 
$c + \bar c = 26$. 
The modified regular representation $W$ 
is comprised of 
the irreducible highest weight representations 
$L(\triangle(\lambda), c)$, 
$L(\bar \triangle(\lambda), \bar c)$, $\lambda\in \mathbb N$, 
of the two Virasoro algebras
$\text{Vir}_c, \text{Vir}_{\bar c}$
with the highest weights given by (1.1) and (1.2)
\begin{eqnarray}
\triangle(\lambda) = \frac{\lambda (\lambda+2)}{4 \varkappa} - \frac{\lambda}{2}, & &
\bar \triangle(\lambda) = - \frac{\lambda (\lambda+2)}{4 \varkappa} - \frac{\lambda}{2}
\end{eqnarray}
for $\lambda \in \mathbb N$. 
As a $\text{Vir}_c \oplus \text{Vir}_{\bar c}$-module, 
it has the form 
\begin{eqnarray}
W & = & \bigoplus_{\lambda \in \mathbb N} W(\lambda) 
= \bigoplus_{\lambda \in \mathbb N} 
L(\triangle(\lambda), c) \otimes 
L(\bar \triangle(\lambda), \bar c). 
\end{eqnarray}
Furthermore, 
$W$ is equipped with a VOA structure as follows: 
for any $ v \otimes v' \in W(\lambda)$, $u\otimes u' \in W(\mu)$, 
we have 
\begin{eqnarray}
Y( v \otimes v', z) u\otimes u' & = & 
\sum_{\nu \in \mathbb N} X_{\lambda \mu}^\nu
\Phi_{\lambda, \mu}^\nu (v, z) u \otimes \Psi_{\lambda, \mu}^\nu (v', z) u',
\end{eqnarray}
where 
$\Phi_{\lambda, \mu}^\nu (\cdot, z)$, $\Psi_{\lambda, \mu}^\nu (\cdot, z) $
are normalized intertwining operators of type 
$\left( \begin{array}{c}
L(\triangle(\nu), c) \\
L(\triangle(\lambda), c) \,\, L(\triangle(\mu), c)
\end{array}
\right)$
and 
$\left( \begin{array}{c}
L(\bar \triangle(\nu), \bar c) \\
L(\bar \triangle(\lambda), \bar c) \,\, L(\bar \triangle(\mu),\bar c)
\end{array}
\right)$
respectively. 
The structure constants $X_{\lambda \mu}^\nu$ vanish
unless $|\lambda - \mu| \leq \nu \leq \lambda + \mu$ 
and $\nu \equiv \lambda + \mu \, (\text{ mod } 2)$, 
in which case they are given by the following formula
\begin{eqnarray}
X_{\lambda \mu}^\nu & = &
\frac{ {\lambda \choose \ell} { \mu \choose \ell} }{ { \lambda + \mu - \ell +1 \choose \ell} }
\frac{1}{[ (\nu+2)^2 - \varkappa^2 ]  [ (\nu+3)^2 - \varkappa^2 ] \ldots 
 [ ( \nu + \ell +1 )^2 - \varkappa^2 ] }
\end{eqnarray}
where $\ell = \frac{\lambda + \mu -\nu}{2}$. 


The formula (1.5) clearly indicates the special role of integral values of $\varkappa$. 
It is an interesting problem to construct certain restricted versions of our vertex operator algebras in these cases. 
It is also an interesting question to rederive our formula (1.5) for the structure coefficients using the original construction of the vertex operator algebras in [FS] and the free bosonic realization of the intertwining operators. 

This article is organized as follows. 
In Section 2, 
we compute explicitly the bimodule associated to $L(\triangle(\lambda), c)$, $\lambda \in \mathbb N$, 
and then prove the fusion rules for this class of Virasoro representations. 
In Section 3, 
we recall the derivation of BPZ equations for the correlation functions of intertwining operators associated to 
$L(\triangle(1), c)$. 
We define the vertex operators $Y(v \otimes v', z)$ for $v \otimes v' \in W(1)$, 
and prove the locality using properties of the 
Gauss hypergeometric function. 
In Section 4, 
we use the reconstruction theorem 
for vertex operator algebras 
and induction on $\lambda$ 
to extend
formula (1.4) to a general $\lambda$. 
In the process of our analysis of the locality properties of vertex operators, 
we deduce (1.5) for the structure constants of our construction.  
Finally, in Section 5, 
we collect the hypergeometric identities used at the various steps of our construction. 

\section{Fusion rules}

The Virasoro algebra is the Lie algebra 
$$
\text{Vir} = \mathbb C L_n \oplus \mathbb C \underline c
$$
with commutation relations
$$
[L_m, L_n] = (m-n) L_{m+n} + \delta_{m+n, 0} \frac{m^3-m}{12} \underline c
$$
$$
[L_m, \underline c] = 0. 
$$
Set 
$$
L_- = \bigoplus_{n=1}^\infty \mathbb C L_{-n}, \qquad 
L_+ = \bigoplus_{n=1}^\infty \mathbb C L_{n}. 
$$

Given complex numbers $h$ and $c$, 
the Verma module $M(h, c)$ is a free $U(L_-)$-module 
generated by a highest weight vector $v= v_{h, c}$ 
satisfying 
$$
L_0 \cdot v = h v, \quad \underline c \cdot v = c v,  \quad \text{and} \quad L_n \cdot v = 0 \hspace{.05in}\text{ for } n >0. 
$$ 
A vector $u \in M(h, c)$ is called a singular (or null) vector 
if $L_+ \cdot u =0$ and $u$ is an eigenvector of $L_0$. 
Obviously, 
the highest weight vector $v_{h, c}$ is singular.  
Feigin and Fuchs in [FF] found all the singular vectors of the Verma modules and 
gave complete descriptions of their structure. 
When $h=0$, 
the vector $L_{-1} v_{0, c}$ is also singular. 
We denote by $M_c$ the quotient module of $M(0, c)$ 
divided by the submodule generated by $L_{-1} v_{0, c}$. 
It is shown in [FZ] that $M_c$ is a vertex operator algebra of rank $c$ 
with Virasoro element  $\omega = L_{-2} \mathbf 1$. 
The Verma module $M(h, c)$ admits a unique maximal proper submodule $J(h, c)$
with irreducible quotient $$L(h, c) = M(h, c) /J(h, c). $$
The modules $M(h, c), L(h, c)$ are all representations of $M_c$.

We fix $\varkappa \notin \mathbb Q$ and set 
$$
c = 13 - 6\varkappa -6\varkappa^{-1}, \qquad 
\triangle(\lambda) = \frac{\lambda(\lambda+2)}{4\varkappa} - \frac{\lambda}{2}, 
\qquad \text{for  } \lambda \in \mathbb Z. 
$$

\begin{lemma}\label{sing}
For $\lambda \in \mathbb N$, 
the maximal proper submodule $J(\triangle(\lambda), c)$ 
of $M(\triangle(\lambda), c)$ 
is generated by a single singular vector 
$S_{1, n+1} v_{\triangle(n), c}$
where 
$$
S_{1, n+1} = \sum_{n+1 = p_1 + \cdots + p_r, p_i \geq 1} 
\frac{n!^2}{\prod_{i=1}^{r-1} (p_1 + \cdots p_i)( p_{i+1} + \cdots p_r)} (-\varkappa^{-1})^{n + 1 - r}
L_{-p_1} \cdots L_{-p_r} . 
$$
For $\lambda <0, \lambda \in \mathbb Z$, 
the Verma module $M( \triangle(\lambda), c)$ is irreducible. 
Moreover, 
$J(\triangle(\lambda), c) \cong M(\triangle(-\lambda-2), c)$ for $\lambda \geq 0$. 
\end{lemma}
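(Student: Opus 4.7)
The plan is to identify $\triangle(\lambda)$ with a weight in Kac's standard parametrization, use the Kac determinant together with the irrationality of $\varkappa$ to pin down exactly when and where singular vectors appear, and then invoke (or verify) the Benoit--Saint-Aubin closed form for $S_{1,n+1}$.

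First I would compute directly that, with the Kac weights $h_{r,s} = \frac{(r\varkappa - s)^2 - (\varkappa - 1)^2}{4\varkappa}$ attached to $c = 13 - 6\varkappa - 6\varkappa^{-1}$, one has
$$\triangle(\lambda) = h_{1,\lambda+1}$$
for $\lambda \geq 0$; this follows from the telescoping identity $(\varkappa-\lambda-1)^2 - (\varkappa-1)^2 = \lambda(\lambda+2) - 2\lambda\varkappa$. Hence a singular vector at level $\lambda+1$ exists. To show that no further singular vectors appear, and to treat $\lambda < 0$, I would solve $\triangle(\lambda) = h_{r,s}$ for $r,s \geq 1$ in general: clearing $4\varkappa$ turns this into
$$(r^2 - 1)\varkappa^2 + 2(1 - rs)\varkappa + (s^2 - 1) = -2\lambda \varkappa + \lambda(\lambda + 2).$$
Since $\varkappa \notin \mathbb{Q}$, the three coefficients must match, forcing $r = 1$ and $s = \lambda + 1$. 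Consequently: if $\lambda < 0$ no Kac weight is hit, the Kac determinants are all nonzero, and $M(\triangle(\lambda), c)$ is irreducible by the Feigin--Fuchs classification in [FF]; if $\lambda \geq 0$ then the singular vector at level $\lambda + 1$ is (up to scalar) the only one, so it alone generates $J(\triangle(\lambda), c)$.

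For the explicit closed form of $S_{1,n+1}$ I would invoke the classical Benoit--Saint-Aubin formula. If one prefers a self-contained verification, it suffices to check by induction on $n$ that $L_1$ and $L_2$ annihilate $S_{1,n+1} v_{\triangle(n), c}$, since $L_1, L_2$ generate $L_+$; this I would carry out using the Virasoro commutation relations and a composition-by-composition analysis. The main obstacle is the combinatorics: the denominators $\prod_{i=1}^{r-1}(p_1 + \cdots + p_i)(p_{i+1} + \cdots + p_r)$ must telescope correctly when contracted against the commutators $[L_k, L_{-p_j}] = (k+p_j)L_{k-p_j} + \delta_{k,p_j}\tfrac{k^3-k}{12} c$, and the factor $(-\varkappa^{-1})^{n+1-r}$ must align with the contribution of the central term evaluated at the specific weight $\triangle(n)$.

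Finally, for the isomorphism $J(\triangle(\lambda), c) \cong M(\triangle(-\lambda - 2), c)$ when $\lambda \geq 0$: the singular vector $S_{1,\lambda+1} v_{\triangle(\lambda),c}$ has weight $\triangle(\lambda) + (\lambda + 1)$, and a direct calculation gives $\triangle(\lambda) + (\lambda + 1) = \frac{\lambda(\lambda+2)}{4\varkappa} + \frac{\lambda+2}{2} = \triangle(-\lambda - 2)$. Hence the submodule it generates is a highest-weight quotient of $M(\triangle(-\lambda - 2), c)$. Since $-\lambda - 2 < 0$, the first part of the lemma already proved shows $M(\triangle(-\lambda - 2), c)$ is irreducible, so the surjection has zero kernel and is an isomorphism.
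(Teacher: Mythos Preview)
The paper's own proof is simply a citation to [FF] and [BS], and your proposal is a faithful elaboration of precisely those references: Feigin--Fuchs for the submodule structure, Benoit--Saint-Aubin for the explicit singular vector, plus the direct weight computation for the final isomorphism.

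One step needs more care, however: from $\varkappa \notin \mathbb{Q}$ alone you cannot conclude that $1, \varkappa, \varkappa^2$ are $\mathbb{Q}$-linearly independent (take $\varkappa = \sqrt{2}$), so ``the three coefficients must match'' is not immediate. The conclusion is nonetheless correct, for a slightly different reason: the quadratic $(r^2-1)\varkappa^2 + 2(1-rs+\lambda)\varkappa + (s^2 - (\lambda+1)^2) = 0$ has discriminant $4(r(\lambda+1)-s)^2$, a perfect square, so for $r \geq 2$ its roots $\varkappa = \frac{s+\lambda+1}{r+1}$ and $\varkappa = \frac{s-\lambda-1}{r-1}$ are rational. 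Hence $\varkappa \notin \mathbb{Q}$ does force $r=1$, and with this patch your argument goes through.
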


\begin{proof}
See [FF], [BS]. 
\end{proof}


\begin{examples} \label{null}
For $\lambda=0$, 
$J(0, c)$ is generated by $L_{-1} v_{0, c}$, 
so $L(0, c) = M_c$. 
For $\lambda=1$, 
$J(\frac{3}{4\varkappa} - \frac{1}{2}, c)$ is generated by 
$$
(L_{-1}^2 - \varkappa^{-1} L_{-2}) v_{\frac{3}{4\varkappa} - \frac{1}{2}, c}.
$$
For $\lambda=2$, 
$J(\frac{2}{\varkappa}-1, c)$ is generated by 
$$
(L_{-1}^3 - 2 \varkappa^{-1} L_{-1}L_{-2} - 2 \varkappa^{-1} L_{-2}L_{-1} + 4 \varkappa^{-2} L_{-3})
v_{\frac{2}{\varkappa}-1, c}. 
$$

\end{examples}


For every vertex operator algebra $V$, 
Y. Zhu defined in [Zh] an associative algebra $A(V)$
and established a one-to-one correspondence between irreducible representations of $V$ and irreducible representations of $A(V)$. 
In general, 
given a $V$-module $M$,
he defined a space $A(M)$ which becomes an $A(V)$-bimodule. 
Frenkel and Y. Zhu used the $A(V)$-theory 
to prove the rationality of vertex operator algebras 
associated to the irreducible vacuum representations of affine Lie algebras 
in positive integral levels 
and computed the fusion rules between their irreducible modules. 
We recall the definitions of $A(V)$ and $A(M)$ from [Zh] 
and use them to determine fusion rules 
among certain irreducible representations of the Virasoro algebra.

\begin{define}
Let $V$ be a vertex operator algebra. 
We define a multiplication in $V$ as follows: 
for any homogeneous $a\in V$, let 
$$
a \ast b = \text{Res}_z \left( Y(a, z) \frac{(z+1)^{\text{deg} a}}{z} b \right). 
$$
Moreover, 
let $O(V)$ be the linear span of elements 
$$
\text{Res}_z \left(Y(a, z) \frac{(z+1)^{\text{deg} a}}{z^2} b\right).
$$
As a vector space, 
$A(V) = V/O(V)$ and the operation $\ast$ induces a multiplication on $A(V)$ making it an associative algebra. 
We denote by $[a]$ the image of $a$ in $A(V)$. 
Then $[\mathbf 1]$ is the identity element and 
$[\omega]$ is central in $A(V)$. 
\end{define}







\begin{lemma} \label{a}
There is an isomorphism of associative algebras: 
$A(M_c) \cong \mathbb C[x]$ 
given by 
$[\omega]^n \mapsto x^n$.  
\end{lemma}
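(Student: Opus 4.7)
The plan is to exhibit a surjective algebra homomorphism $\mathbb{C}[x] \to A(M_c)$, $x \mapsto [\omega]$, and then prove injectivity by evaluating on Virasoro Verma modules.

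For surjectivity, the main tool is the general identity from Zhu's construction: for any homogeneous $a \in V$, any $b \in V$, and any $n \geq 0$,
$$\text{Res}_z\, Y(a, z)\,\frac{(z+1)^{\text{deg}\, a + n}}{z^{2+n}}\,b \;\in\; O(V).$$
Taking $b = \mathbf{1}$ (and using $Y(a, z)\mathbf{1} = e^{zL_{-1}} a$) yields the ``$L_{-1}$--$L_0$'' relation $L_{-1}a + (\text{deg}\, a)\, a \in O(V)$, that is, $[L_{-1}a] = -[L_0 a]$. Taking $a = \omega$ and varying $n$ yields, for each $n \geq 0$, a congruence
$$\sum_{j=0}^{n+2}\binom{n+2}{j}\,L_{j-n-3}\,b \;\equiv\; 0 \pmod{O(V)};$$
for $n = 0$ this reads $L_{-3}b + 2L_{-2}b + L_{-1}b \equiv 0$, and recursively these relations express $[L_{-k}b]$ for $k \geq 3$ in terms of $[L_{-2}b]$ and $[L_{-1}b]$. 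Combined with the PBW basis $L_{-n_1}\cdots L_{-n_r}\mathbf{1}$ ($n_1 \geq \cdots \geq n_r \geq 2$) of $M_c$, and a double induction on total weight and on the largest part $n_1$, every class in $A(M_c)$ reduces to a $\mathbb{C}$-linear combination of $[L_{-2}^k\mathbf{1}]$, $k \geq 0$.

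A short direct calculation gives $\omega \ast b = L_{-2}b + 2L_{-1}b + L_0 b$; combined with $[L_{-1}b] = -[L_0 b]$ this becomes $[\omega]\ast[b] = [L_{-2}b] - [L_0 b]$. Specializing to $b = L_{-2}^{k-1}\mathbf{1}$ (on which $L_0$ acts by $2(k-1)$) yields the recursion
$$[L_{-2}^k\mathbf{1}] \;=\; \bigl([\omega] + 2(k-1)\bigr)\ast[L_{-2}^{k-1}\mathbf{1}],$$
so each $[L_{-2}^k\mathbf{1}]$ is a monic polynomial of degree $k$ in the central element $[\omega]$. Hence $A(M_c)$ is spanned by $\{[\omega]^k\}_{k \geq 0}$, giving surjectivity of $\mathbb{C}[x] \to A(M_c)$. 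For injectivity: for every $h \in \mathbb{C}$, the Verma module $M(h, c)$ is an $M_c$-module, and on its one-dimensional top level $\mathbb{C} v_{h,c}$ the zero mode $o(\omega) = L_0$ acts as multiplication by $h$. Any polynomial identity $p([\omega]) = 0$ in $A(M_c)$ would therefore force $p(h) = 0$ for all $h \in \mathbb{C}$, whence $p = 0$.

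The main obstacle lies in the surjectivity step, namely the combinatorial bookkeeping needed to verify that the Zhu relations, which feature $L_{-n}$ acting on the left of $b$, suffice to eliminate every $L_{-n}$ with $n \geq 3$ occurring anywhere inside a PBW monomial. The fact that $M_c$ is spanned by monomials with all $n_i \geq 2$ --- a consequence of $L_{-1}\mathbf{1} = 0$ in any VOA --- is what lets the double induction terminate cleanly, with no $L_{-1}$ or $L_0$ factors needing to be reintroduced at the final step.
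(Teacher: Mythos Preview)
Your argument is correct and is essentially the standard proof from the references [DMZ, Proposition~3.1], [W, Lemma~4.1], [L, Proposition~2.15] that the paper cites in lieu of a proof. The surjectivity via the Zhu relations $(L_{-n-3}+2L_{-n-2}+L_{-n-1})b\in O(M_c)$ (your relations are linear combinations of these) together with $[L_{-1}b]=-[L_0 b]$, and the injectivity via evaluation on the family $\{M(h,c)\}_{h\in\mathbb C}$, are exactly the ingredients used there.
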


\begin{proof}
See [DMZ, Proposition 3.1],  also [W, Lemma 4.1], [L, Proposition 2.15]. 
\end{proof}

Let $M =\bigoplus_{n=0}^\infty M_n$ be a $V$-module. 
For any homogeneous $a \in V$, 
the vertex operator $a(\text{deg} a-1)$ preserves the grading on $M$. 

\begin{lemma}
The top level $M_0$ becomes an $A(V)$-module where $[a]$ acts by $a(\text{deg} a-1)$. 
\end{lemma}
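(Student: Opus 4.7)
The plan is to verify three claims: (i) the operator $o(a) := a(\deg a - 1)$ restricts to $M_0$ for every homogeneous $a \in V$; (ii) $o(\mathbf{1}) = \mathrm{id}_{M_0}$; (iii) $o$ descends to $A(V)$ and intertwines the $\ast$-product, i.e.\ $o(a \ast b) = o(a) o(b)$ on $M_0$ and $o(u) = 0$ on $M_0$ for every $u \in O(V)$.

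Claims (i) and (ii) are immediate: the mode $a(n)$ shifts conformal weight by $\deg a - n - 1$, so $a(\deg a - 1)$ preserves $M_0$; and the vacuum axiom $Y(\mathbf{1}, z) = \mathrm{id}_M$ forces $o(\mathbf{1}) = \mathbf{1}(-1) = \mathrm{id}$.

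The substance is (iii), and I would prove both halves by the same mechanism. The essential input is a grading vanishing: for $w \in M_0$ and homogeneous $c \in V$, the vector $c(n) w$ lies in $M_{\deg c - n - 1}$ and hence vanishes whenever $n \geq \deg c$. So only low modes of $a$ and $b$ can act nontrivially on $M_0$. For $o(a \ast b) = o(a) o(b)$, I would expand $a \ast b = \sum_{i \geq 0} \binom{\deg a}{i} a(i-1) b$, then use the Borcherds--Li associativity relation
\begin{equation*}
(a(i-1) b)(m) = \sum_{j \geq 0} (-1)^j \binom{i-1}{j} \Bigl[ a(i-1-j) b(m+j) - (-1)^{i-1} b(i-1+m-j) a(j) \Bigr]
\end{equation*}
to rewrite every $o(a(i-1) b)$ as a combination of $a(\cdot) b(\cdot)$ and $b(\cdot) a(\cdot)$ compositions. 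The vanishing discards all but finitely many contributions, and a binomial resummation collapses the survivors to $a(\deg a - 1) b(\deg b - 1) = o(a) o(b)$. The statement $o(u) = 0$ for $u \in O(V)$ is parallel: the extra factor $1/z$ in the definition of $O(V)$ shifts the relevant indices by one, and the analogous surviving sum telescopes to zero.

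The main obstacle is the combinatorial bookkeeping when resumming the Borcherds relation against the binomial weights; the grading vanishing, however, confines the calculation to a short finite sum. The full argument is the one in [Zh].
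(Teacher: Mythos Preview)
The paper does not give a proof of this lemma at all; it is stated without proof as a standard fact from Zhu's $A(V)$-theory. Your outline is correct and is precisely the argument in [Zh]: the grading forces $o(a)$ to preserve $M_0$, and the Borcherds associativity formula together with the vanishing $c(n)w=0$ for $n\geq\deg c$, $w\in M_0$, reduces both $o(a\ast b)=o(a)o(b)$ and $o(O(V))=0$ to short binomial identities. There is nothing to compare against in the paper itself, and your sketch is the standard one.
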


\begin{lemma} \label{aa}
For $V = M_c$ and $M =  M(h, c)$, 
we have $M_0 \cong \mathbb C_h$, 
where $\mathbb C_h$ is the $ \mathbb C[x]$-module such that $x$ acts as multiplication by $h$. 
\end{lemma}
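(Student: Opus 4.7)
The plan is to combine Lemma \ref{a} with the action described in the preceding lemma and to identify how the generator $[\omega]$ of $A(M_c)$ operates on the top level $M_0$ of $M(h,c)$.

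First, I would observe that $M(h,c)$ is free over $U(L_-)$ on the highest weight vector, so its $L_0$-grading (shifted so that the minimal eigenvalue sits in degree $0$) has one-dimensional top level $M_0 = \mathbb{C} v_{h,c}$. Consequently the $A(M_c)$-module structure on $M_0$ is simply a character, and by Lemma \ref{a} it is completely determined by the single scalar in $\mathbb{C}$ specifying the action of the generator $[\omega]$.

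Second, by the preceding lemma, $[\omega]$ acts on $M_0$ as $\omega(\deg \omega - 1) = \omega(1)$. The Virasoro element satisfies $Y(\omega,z) = \sum_n L_n z^{-n-2} = \sum_n \omega(n) z^{-n-1}$, from which one reads off $\omega(n) = L_{n-1}$, and in particular $\omega(1) = L_0$. Since $L_0$ scales $v_{h,c}$ by $h$, the scalar in question is exactly $h$. Transporting along the isomorphism $[\omega]^n \mapsto x^n$ of Lemma \ref{a}, this is precisely the statement that $x$ acts on $M_0$ by multiplication by $h$, i.e.\ $M_0 \cong \mathbb{C}_h$ as a $\mathbb{C}[x]$-module.

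The lemma is essentially a bookkeeping exercise once Lemma \ref{a} is in hand, so there is no serious obstacle. The one point that requires a moment of care is the mode shift $\omega(n) = L_{n-1}$ between the generic VOA indexing $Y(a,z) = \sum_n a(n) z^{-n-1}$ and the traditional Virasoro indexing $Y(\omega,z) = \sum_n L_n z^{-n-2}$, which pins down $L_0$ (rather than $L_{-1}$ or $L_1$) as the operator representing $[\omega]$ on the top level; getting this shift right is what fixes the eigenvalue at $h$ rather than anything else.
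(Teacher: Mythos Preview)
Your proof is correct and follows exactly the same reasoning as the paper, which simply notes that it is obvious since $\omega(1) = L_0$. You have merely spelled out the details that the paper leaves implicit.
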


\begin{proof}
It is obvious since $\omega(1) = L_0$. 
\end{proof}

\begin{define}
Let $M$ be a $V$-module. 
We define two bilinear operations $a \ast v$ and $v \ast a$ 
for homogeneous $a \in V$, $v\in M$ as follows: 
\begin{eqnarray}
a \ast v & = & \text{Res}_z \left( Y(a, z) \frac{(z+1)^{\text{deg} a}}{z} v \right), \nonumber \\ 
v \ast a  & = & \text{Res}_z \left( Y(a, z) \frac{(z+1)^{\text{deg} a-1}}{z} v \right). \nonumber
\end{eqnarray}
Define $A(M) = M/O(M)$ 
where $O(M)$ is the linear span of elements of type 
$$
\text{Res}_z \left( Y(a, z) \frac{(z+1)^{\text{deg} a}}{z^2} v \right). 
$$
We denote the image of $v \in M$ in $A(M)$ by $[v]$. 
\end{define}

\begin{lemma}
The two operations defined above induce an $A(V)$-bimodule structure on $A(M)$. 
\end{lemma}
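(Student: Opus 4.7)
The plan is to follow the strategy Zhu used in [Zh] for the construction of $A(V)$ and extend it from $V$ acting on itself to $V$ acting on a module. Three independent points must be verified: the operations $\ast$ descend to well-defined bilinear maps on $A(V) \times A(M)$ and $A(M) \times A(V)$; each action is associative, i.e.\ $(a\ast b)\ast v \equiv a\ast(b\ast v) \pmod{O(M)}$ and $(v\ast a)\ast b \equiv v\ast(a\ast b) \pmod{O(M)}$; and the two actions commute, $(a\ast v)\ast b \equiv a\ast(v\ast b) \pmod{O(M)}$.

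First I would prove a generalized vanishing statement: for any homogeneous $a\in V$, any $v\in M$, and any integers $m \geq n \geq 0$,
$$
\text{Res}_z\, Y(a,z)\, \frac{(z+1)^{\deg a + n}}{z^{m+2}}\, v \ \in\ O(M).
$$
The case $m = n = 0$ is the definition of $O(M)$; the general case follows by induction using the identity
$$
\frac{(z+1)^{n+1}}{z^{m+2}} \ = \ \frac{(z+1)^{n}}{z^{m+1}} + \frac{(z+1)^{n}}{z^{m+2}},
$$
combined with the $L_{-1}$-derivative property $Y(L_{-1}a, z) = \partial_z Y(a, z)$ and an integration by parts in $z$. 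This lemma supplies all the ``error terms'' that appear later.

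Next, the descent and associativity checks all reduce to one template computation based on the Jacobi identity. For example, to compare $(a\ast b)\ast v$ with $a\ast(b\ast v)$, I would write the former as a double residue using the weak associativity
$$
Y(Y(a, z_0) b, z_2)\, v \ = \ Y(a, z_0 + z_2)\, Y(b, z_2)\, v
$$
in the appropriate expansion domain, and the latter as a direct iterated residue of $Y(a, z_1) Y(b, z_2)\, v$ (with $z_0 = z_1 - z_2$). The Jacobi identity bridges the two radial orderings by a commutator supported at $z_0 = 0$; binomial expansion of $(z_1+1)^{\deg a} = ((z_2+1)+z_0)^{\deg a}$ rewrites the commutator as an expression of exactly the shape treated by the generalized vanishing lemma, so it vanishes in $A(M)$. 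The other associativity and the commutativity of left and right actions are handled by the same scheme.

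The main obstacle will be the asymmetric exponent bookkeeping: $a \ast v$ carries $(z+1)^{\deg a}$, while $v \ast a$ carries $(z+1)^{\deg a - 1}$, and $O(V)$ and $O(M)$ both use the variant $(z+1)^{\deg a}/z^2$. Keeping track of exactly which combination of $(z+1)$-powers and inverse powers of $z$ appears after each substitution and Jacobi move, and verifying in each case that the residual error lies in the range $m \geq n \geq 0$ covered by the generalized vanishing lemma, is where the real work lies; everything else is formal contour manipulation.
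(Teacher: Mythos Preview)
The paper states this lemma without proof; it is quoted as a known structural fact from Zhu's $A(V)$-theory (the bimodule version is originally due to Frenkel--Zhu [FZ]), so there is no proof in the paper to compare against. Your outline correctly reproduces the standard argument from [Zh] and [FZ]: the generalized vanishing lemma you state first is exactly Zhu's Lemma~2.1.2--2.1.3, and the associativity and left/right compatibility checks via the Jacobi identity together with binomial expansion of $(z_2+1+z_0)^{\deg a}$ are the content of [FZ, Theorem~1.5]. The exponent bookkeeping you flag at the end is indeed the only delicate point, and the range $m\ge n\ge 0$ in your vanishing lemma is precisely what absorbs all the error terms; your sketch is sound.
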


\begin{lemma} \label{2}
If $M$ is a module of $V$ and $M'$ is a submodule of $M$, 
then the bimodule $A(M/M')$ associated to the quotient module $M/M'$
is isomorphic to $A(M)/[M']$ 
where $[M']$ is the image of $M'$ under the projection from $M$ to $A(M)$. 
\end{lemma}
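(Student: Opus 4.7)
The plan is to realize the isomorphism as the map induced by the obvious projection $\pi : M \to M/M'$, and to identify its kernel with $[M']$.

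First, I would observe that $\pi$ is a $V$-module homomorphism, and the definitions of $O(M)$ and of the two bilinear operations $a \ast v$, $v \ast a$ from the preceding definition depend only on the $V$-module structure. Hence $\pi$ carries $O(M)$ into $O(M/M')$ and is compatible with both operations, so it descends to a well-defined map of $A(V)$-bimodules
\[
\bar\pi : A(M) \longrightarrow A(M/M'), \qquad [v] \longmapsto [\pi(v)].
\]
Surjectivity is immediate, since any class in $A(M/M')$ is represented by some $\pi(v)$ for $v \in M$.

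Next, I would analyze the kernel. The key observation is that every homogeneous element of $M/M'$ is of the form $\pi(v)$ for some $v \in M$, so $O(M/M') = \pi(O(M))$ (spanned by the same expressions $\mathrm{Res}_z Y(a,z)(z+1)^{\deg a} z^{-2} \pi(v)$). Consequently, $[\pi(v)] = 0$ in $A(M/M')$ iff $\pi(v) \in \pi(O(M))$, iff $v \in O(M) + M'$. Reading this back in $A(M) = M/O(M)$, it says exactly that $[v] \in [M']$. Thus $\ker \bar\pi = [M']$ and the first isomorphism theorem gives the required bimodule isomorphism $A(M)/[M'] \cong A(M/M')$.

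I do not expect any serious obstacle; the whole argument is a formal consequence of the functoriality of $A(-)$ in $V$-module maps. The only place where one must be slightly careful is the identity $O(M/M') = \pi(O(M))$, which uses surjectivity of $\pi$ together with the fact that $\pi$ intertwines the $V$-action used in the defining formulas of $O$; once this is noted, the rest is bookkeeping.
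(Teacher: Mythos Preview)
Your proof is correct. The paper itself does not supply a proof of this lemma --- it is stated as a standard structural fact about $A(V)$-bimodules, alongside the neighboring lemmas --- so your argument actually provides what the paper omits. The approach you take (descending the quotient map $\pi$ to a surjective bimodule map $A(M)\to A(M/M')$ and identifying its kernel via the equality $O(M/M')=\pi(O(M))$) is the natural one and is exactly what one would write to justify the statement.
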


\begin{lemma}\label{3}
As an $A(M_c)$-bimodule, 
$A(M(h, c))$ is isomorphic to $\mathbb C[x, y]$ 
where $A(M_c) \cong \mathbb C[x] \cong \mathbb C[y]$ acts as multiplications by polynomials in $x$ and $y$. 
\end{lemma}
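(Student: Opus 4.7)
The plan is to exhibit an explicit bimodule isomorphism $\Phi\colon \mathbb{C}[x,y]\to A(M(h,c))$ by defining $\Phi(x^ay^b) = [\omega]^a\ast[v_{h,c}]\ast[\omega]^b$ and extending linearly. Since $[\omega]$ is central in $A(M_c)\cong\mathbb{C}[x]$ (Lemma~\ref{a}), this is a well-defined $A(M_c)$-bimodule map sending left/right multiplication by $x$ and $y$ to left/right multiplication by $[\omega]$. The two halves of the proof are surjectivity (a straightforward Zhu-style reduction) and injectivity (which requires a separation argument).

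\textbf{Surjectivity.} For any $u\in M(h,c)$ I first record
\begin{align*}
\omega\ast u &= L_{-2}u + 2L_{-1}u + L_0 u,\\
u\ast\omega &= L_{-2}u + L_{-1}u,\\
\mathrm{Res}_z\,Y(\omega,z)\,\tfrac{(z+1)^2}{z^2}u &= L_{-3}u+2L_{-2}u+L_{-1}u \;\in\; O(M).
\end{align*}
These give in $A(M(h,c))$ the relations $[L_{-3}u]\equiv -2[L_{-2}u]-[L_{-1}u]$ and, for any $\xi\in O(M)$, $[\omega\ast\xi]=[\xi\ast\omega]=0$, whence $L_{-1}\xi+L_0\xi\in O(M)$. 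Combining this secondary relation with the Virasoro bracket $[L_{-1},L_{-n}]=(n-1)L_{-n-1}$, induction on $n$ produces universal scalars $\alpha_n,\beta_n$ (independent of $u$) with $[L_{-n}u]\equiv \alpha_n[L_{-2}u]+\beta_n[L_{-1}u]$ for every $u$ and $n\ge 3$. Applying this recursively to each factor of a PBW monomial $L_{-n_1}\cdots L_{-n_k}v$ reduces every class in $A(M(h,c))$ to a combination of $[L_{-2}^aL_{-1}^bv]$. Solving the two basic formulas above for $L_{-2}v$ and $L_{-1}v$ gives $[L_{-1}v] = [\omega][v]-[v][\omega]-h[v]$ and $[L_{-2}v] = 2[v][\omega]-[\omega][v]+h[v]$, and iterating places every such generator inside $\mathrm{im}\,\Phi$.

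\textbf{Injectivity.} For each $(h_1,h_2)\in\mathbb{C}^2$, make $\mathbb{C}_{h_1}$ (resp.\ $\mathbb{C}_{h_2}$) into a right (resp.\ left) $A(M_c)$-module via Lemma~\ref{aa} and form the specialization
\[ E_{h_1,h_2} \;=\; \mathbb{C}_{h_2}\otimes_{A(M_c)}A(M(h,c))\otimes_{A(M_c)}\mathbb{C}_{h_1}. \]
From the surjectivity computation, $\Phi(p(x,y))$ maps to $p(h_1,h_2)\cdot(1\otimes[v]\otimes 1)$ in $E_{h_1,h_2}$. Hence if $1\otimes[v]\otimes 1$ is nonzero on a Zariski-dense subset of $(h_1,h_2)$, then $\Phi(p)=0$ forces $p(h_1,h_2)=0$ on that set, hence $p=0$. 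The required nonvanishing follows from the standard existence of a nontrivial intertwining operator of Virasoro Verma modules at generic triples of highest weights (equivalently, from the nondegeneracy of the Shapovalov form at generic weight), which provides the top-level pairing realizing $E_{h_1,h_2}$ as one-dimensional and nonzero.

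The principal obstacle is exactly this last nonvanishing step: the combinatorial Zhu-reductions of Step 1 are routine once the correct relations in $O(M)$ are identified, but injectivity is the only part of the argument that is not purely formal, requiring the representation-theoretic input that generic intertwining operators between Verma modules exist and act nontrivially on highest weight vectors.
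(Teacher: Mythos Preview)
Your proof is correct. For surjectivity you follow the paper's line: both arguments rest on knowing that $(L_{-n-3}+2L_{-n-2}+L_{-n-1})u\in O(M(h,c))$ for all $n\ge 0$ and all $u$; the paper quotes this from [W], while you derive the $n\ge 1$ cases from the $n=0$ case by induction, using the closure of $O(M)$ under $L_{-1}+L_0$. From there the reduction to the span of $[\omega]^a[v_{h,c}][\omega]^b$ is identical.

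For injectivity the approaches genuinely diverge. The paper defers to the combinatorial filtration arguments of [DMZ, Proposition~3.1] and [L, Proposition~2.15], which pin down $O(M(h,c))$ exactly and exhibit $\{L_{-2}^aL_{-1}^b v_{h,c}\}$ as a basis of the quotient. You instead separate points: evaluate at generic $(h_1,h_2)$ and use the top-level pairing furnished by an intertwining operator among Verma modules to see that $1\otimes[v_{h,c}]\otimes 1$ is nonzero in the specialization. This is valid and more conceptual, but watch the circularity hazard: in this paper the one-dimensionality of $I^{M(h_3,c)^*}_{M(h_1,c)\,M(h_2,c)}$ is itself \emph{deduced} from the present lemma (via Lemma~\ref{generalized} and the computation following it), so you must, as your last paragraph indicates, appeal to an independent construction of the intertwining operator---direct construction of Virasoro primary fields, or the universal-property argument for generalized Verma modules---rather than to anything downstream here. (A minor point: you only need $E_{h_1,h_2}\ne 0$, not that it is one-dimensional; the intertwining operator gives the former but not the latter.) With that external input in hand your argument stands, trading a self-contained combinatorial check for a cleaner representation-theoretic one.
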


\begin{proof}
By definition, 
for any $v \in M(h, c)$ we have 
$$
[\omega] \cdot [v] = [ (L_0 + 2 L_{-1} + L_{-2}) v], \qquad 
[v] \cdot [\omega] = [ (L_{-2} + L_{-1}) v]. 
$$
Moreover, 
it can be shown that $O(M(h, c))$ is spanned by elements 
$$
(L_{-n-3} + 2 L_{-n-2} + L_{-n-1}) v 
= \text{Res}_z \left( Y(\omega, z) \frac{(z+1)^2}{z^{2+n}} v \right)
$$
where $n \geq 0$, $v \in M(h, c)$ (see [W, Lemma 4.1]). 
This implies that the map 
$$
\varphi: \mathbb C[x, y] \to A(M(h, c)); \qquad  x^m y^n \mapsto [\omega]^m \cdot [v_{h, c}] \cdot [\omega]^n
$$
is surjective. 
In fact, 
it is also injective (see [DMZ, Proposition 3.1], [L, Proposition 2.15]).  
\end{proof}

\begin{lemma} \label{poly}
$A(L(\triangle(n), c))$ is isomorphic to $\mathbb C[x, y]/(f_n (x, y))$, 
where the polynomial $f_n(x, y)$ is given by 
$$
\sum_{n+1 = p_1 + \cdots + p_r, p_i \geq 1} 
\frac{n!^2}{\prod_{i=1}^{r-1} (p_1 + \cdots p_i)( p_{i+1} + \cdots p_r)} \varkappa^{-n - 1 + r}
$$
$$
(x - p_1 y - p_2 - \cdots - p_r - \triangle(n)) \cdots
(x - p_r y - \triangle(n)). 
$$
\end{lemma}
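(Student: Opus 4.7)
The plan is to combine Lemmas \ref{2}, \ref{3}, and \ref{sing} to turn the statement into a finite computation inside $\mathbb{C}[x,y]$. By Lemma \ref{2}, $A(L(\triangle(n),c))$ equals $A(M(\triangle(n),c))/[J(\triangle(n),c)]$; by Lemma \ref{3} the latter is identified with $\mathbb{C}[x,y]$ so that the left and right $[\omega]$-actions become multiplication by $x$ and $y$ respectively. Since $\mathbb{C}[x,y]$ is commutative, sub-bimodules coincide with ordinary ideals, and by Lemma \ref{sing} the submodule $J(\triangle(n),c)$ is generated by the single singular vector $w := S_{1,n+1}\,v_{\triangle(n),c}$. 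Thus it suffices to show that $[w] = f_n(x,y)$ and that every $[L_{-k_1}\cdots L_{-k_s}w]$ lies in the principal ideal $(f_n)$.

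The key input will be the following reduction formula: for any $L_0$-eigenvector $u \in M(\triangle(n),c)$ of weight $H$, and any $k\geq 1$,
$$[L_{-k}u] \;=\; (-1)^{k+1}(x - ky - H)\cdot [u].$$
For $k=1,2$ this is obtained by simultaneously solving $[\omega]\cdot[u] = [(L_0 + 2L_{-1} + L_{-2})u]$ and $[u]\cdot[\omega] = [(L_{-2}+L_{-1})u]$; for $k\geq 3$ it follows by induction from the three-term relation $[L_{-k}u] = -2[L_{-k+1}u] - [L_{-k+2}u]$, which comes from the description of $O(M)$ recalled in the proof of Lemma \ref{3}.

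With this formula in hand I would apply it iteratively to each monomial $L_{-p_1}\cdots L_{-p_r} v_{\triangle(n),c}$, peeling off one operator at a time from the left and tracking the eigenvalue shift $q_i := p_{i+1}+\cdots+p_r$. Commutativity of $\mathbb{C}[x,y]$ lets the factors accumulated at successive stages be multiplied in any order, yielding
$$[L_{-p_1}\cdots L_{-p_r}\, v_{\triangle(n),c}] \;=\; \prod_{i=1}^{r}(-1)^{p_i+1}\bigl(x - p_i y - q_i - \triangle(n)\bigr).$$
Multiplying by the coefficient of $L_{-p_1}\cdots L_{-p_r}$ inside $S_{1,n+1}$ and summing over all compositions $n+1 = p_1+\cdots+p_r$, the signs $(-1)^{n+1-r}$ and $(-1)^{n+1+r}$ cancel and one recovers exactly $[w]=f_n(x,y)$. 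The very same iterative identity, applied now with $w$ (an $L_0$-eigenvector of weight $\triangle(n)+n+1$) in place of $v_{\triangle(n),c}$, shows that each $[L_{-k_1}\cdots L_{-k_s}w]$ is a polynomial multiple of $[w]$, so $[J(\triangle(n),c)] \subseteq (f_n)$; the reverse containment is immediate since $[w]=f_n$ lies in $[J(\triangle(n),c)]$ and generates $(f_n)$.

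The main technical obstacle is the iterated computation producing the product formula: the eigenvalue $H$ appearing in the reduction must be updated to $\triangle(n)+q_i$ at each peeling, and it is precisely the commutativity of $\mathbb{C}[x,y]$ that allows the linear factors produced at successive stages to assemble in the order $i=1,\ldots,r$ matching the definition of $f_n$. Once this product formula is in place, the rest of the argument reduces to bookkeeping of signs and powers of $\varkappa$.
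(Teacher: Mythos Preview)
Your proposal is correct and follows essentially the same route as the paper's proof: reduce to computing the image of the singular vector via Lemmas \ref{2}, \ref{3}, \ref{sing}, establish the reduction formula $[L_{-k}u] = (-1)^{k+1}(x-ky-H)[u]$ for an $L_0$-eigenvector $u$ of weight $H$ (the paper writes it as $(-1)^k(-x+ky+h)g_u$), and then iterate. Your argument is in fact slightly more explicit than the paper's in one respect: you spell out why $[J(\triangle(n),c)]$ equals the full principal ideal $(f_n)$, by noting that the same reduction formula applied with the singular vector $w$ in place of $v_{\triangle(n),c}$ forces every $[L_{-k_1}\cdots L_{-k_s}w]$ into $(f_n)$, whereas the paper simply asserts that computing $[w]$ suffices.
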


\begin{proof}
By Lemma \ref{2} and \ref{3}, 
to determine $A(L(\triangle(n), c))$
it suffices to compute the polynomial 
which corresponds to the singular vector $S_{1, n+1} v_{\triangle(n), c}$ in Lemma \ref{sing}. 
Since 
$$
[(L_{-n-3} + 2L_{-n-2} + L_{-n-1}) v] =0$$ 
for $n>0, v \in M(\triangle(n), c)$,  
it can be shown by induction that 
$$
[L_{-n} v] = (-1)^n [ ((n-1) L_{-2} + (n-2) L_{-1}) v]
$$ 
for $n \geq 1$. 
If $[v]$ is mapped to $g_v (x, y)$ 
under the isomorphism 
$\varphi: A(M(\triangle(n), c)) \to \mathbb C[x, y]$ in Lemma \ref{3}, 
then 
$$
\varphi ( [(L_{-1} + L_0)v ] ) = (x -y) g_v, \qquad 
\varphi( [(L_{-2} - L_0) v ] ) =  (2y-x) g_v. 
$$
If furthermore 
$L_0 \cdot v = h \, v$, 
then 
$$
[L_{-n} v]  = (-1)^n [((n-1)(L_{-2} -L_0) + (n-2) (L_{-1} + L_0) + L_0 )v ] 
$$
is mapped to 
$$
(-1)^n ( (n-1)(2y-x) + (n-2) (x-y) + h ) g_v
= (-1)^n ( -x + ny + h) g_v
$$
under the isomorphism $\varphi$. 
It follows that 
the polynomial corresponding to $S_{1, n+1} v_{\triangle(n), c}$ 
is 
\begin{eqnarray}
& & \sum_{n+1 = p_1 + \cdots + p_r, p_i \geq 1} 
\frac{n!^2}{\prod_{i=1}^{r-1} (p_1 + \cdots p_i)( p_{i+1} + \cdots p_r)} (-\varkappa^{-1})^{n + 1 - r}
\nonumber \\
& & (-1)^{p_1} (-x + p_1 y + p_2 + \cdots + p_r + \triangle(n)) 
\cdots
(-1)^{p_r} (-x + p_r y + \triangle(n)) 
\nonumber 
\end{eqnarray}
\end{proof}

\begin{examples}
We list the polynomials $f_n (x, y)$ for small values of $n$: 
\begin{eqnarray}
f_0 & = & x-y; \hspace{.1in}
\text{in this case $\mathbb C[x, y]/(f_0) \cong \mathbb C[x]$, we recover the algebra $A(L(0, c))$.}\nonumber \\
f_1 & = & (x-y-\triangle(1)-1)(x-y-\triangle(1)) + \varkappa^{-1} (x- 2y- \triangle(1))  \nonumber \\
& = & (x - y - \triangle(1)) (x - y - \triangle(-1)) - \frac{1}{\varkappa} y;  \nonumber \\
f_2 & = & (x -y - \triangle(2) -2 ) (x - y -  \triangle(2) -1) ( x -y -  \triangle(2)) \nonumber \\
&  & + \quad 2 \varkappa^{-1} (x - y -  \triangle(2) -2) (x -2y -  \triangle(2)) \nonumber \\
& & + \quad 2 \varkappa^{-1} (x - 2y -  \triangle(2) -1) ( x - y -  \triangle(2))  \nonumber \\
& & + \quad 4 \varkappa^{-2} (x - 3y -  \triangle(2)) \nonumber \\
& = & (x -y ) [(x -y -  \triangle(2)) (x - y -  \triangle(-2)) - \frac{4}{\varkappa} y]. \nonumber 
\end{eqnarray}
\end{examples}

In fact, 
all $f_n$ can be decomposed into products of linear and quadratic polynomials.  

\begin{define}
We define
$g_0 (x, y) = x-y$ and  
\begin{eqnarray} 
g_s (x, y) & = & (x - y)^2 - (\triangle(s) + \triangle(-s)) (x+y) + \triangle(s) \triangle(-s)  \nonumber \\
& =  & (x - y - \triangle(s)) (x - y - \triangle(-s)) - \frac{s^2}{\varkappa} y \nonumber 
\end{eqnarray}
for $s \in \mathbb N^+$ 
(note that $\triangle(s) + \triangle(-s) = \frac{s^2}{2\varkappa}$). 
\end{define} 

\begin{lemma} \label{matrix}
For $k \in \mathbb N$, 
we have 
$f_{2k} = g_0 g_2 \cdots g_{2k}$ and 
$f_{2k+1} = g_1 g_3 \cdots g_{2k+1}$. 
\end{lemma}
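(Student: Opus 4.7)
The strategy is to prove, for each integer $s$ with $0 \le s \le n$ and $s \equiv n \pmod{2}$, that the polynomial $g_s(x,y)$ divides $f_n(x,y)$; the two identities then follow from a degree count together with a comparison of leading coefficients. The polynomials $g_s$ with $s \geq 1$ are irreducible (nondegenerate parabolic) quadrics in $\mathbb{C}[x,y]$, as one checks using $\triangle(s)+\triangle(-s)=s^2/2\varkappa$ and $\varkappa \notin \mathbb{Q}$; the polynomial $g_0 = x-y$ is linear; and for distinct admissible $s_1 \ne s_2$, the polynomials $g_{s_1}$ and $g_{s_2}$ are pairwise coprime (they cut out distinct curves, since $\triangle$ is injective on the integers). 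Hence the product $\prod_s g_s$ divides $f_n$. Inspection of the defining sum in Lemma \ref{poly} shows that $f_n$ has total degree $n+1$ with leading term $x^{n+1}$ (coming from the all-ones composition $p_1=\cdots=p_{n+1}=1$), and the same is true of the product on the right, so matching degrees and leading coefficients forces the claimed equalities.

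The divisibility $g_s \mid f_n$ is equivalent to the vanishing of $f_n$ on the zero curve $\{g_s = 0\}$. A direct computation verifies that the polynomial map $\mu \mapsto (\triangle(\mu+s), \triangle(\mu))$ takes values in $\{g_s = 0\}$, and, since its image is Zariski-dense in this irreducible curve, it suffices to establish the polynomial identity
\[
f_n(\triangle(\mu+s), \triangle(\mu)) = 0 \qquad \text{for all } \mu \in \mathbb{C}.
\]
Substituting $x = \triangle(\mu+s)$ and $y = \triangle(\mu)$ into the explicit formula of Lemma \ref{poly} reduces the claim to a summation identity over compositions of $n+1$, in which each summand is a product of linear factors in $\mu$ scaled by a composition-dependent weight rational in $\varkappa$.

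The main obstacle is establishing precisely this summation identity; it is a genuine hypergeometric-flavored identity reflecting the fusion structure under development in this section, of the kind that the authors announce they collect in Section 5. I would attempt it by induction on $n$, with base cases $n=0,1,2$ supplied by the Examples following Lemma \ref{poly}, and try to isolate the recursion $f_n = g_n \cdot f_{n-2}$; once available, this single recursion yields both stated factorizations at once. Alternatively, one may operate directly on the Benoit--Saint-Aubin composition sum, splitting off the ``first block'' $p_1$ or the ``last block'' $p_r$ and recognizing the resulting telescoping as a specialization of a hypergeometric summation. Either way, the crux is producing nontrivial cancellation in a sum indexed by compositions of $n+1$.
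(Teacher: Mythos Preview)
Your outline is sound in principle---divisibility by each irreducible $g_s$ plus a degree and leading-coefficient match would indeed force the factorization---but as you yourself acknowledge, the argument is not complete: you have not established the vanishing $f_n(\triangle(\mu+s),\triangle(\mu))=0$. That identity is the entire content of the lemma, and your proposal only gestures at possible methods (induction on $n$, or telescoping the Benoit--Saint-Aubin composition sum) without carrying any of them out. The suggestion that this is one of the hypergeometric identities assembled in Section~5 is also off the mark: those are identities among Gauss functions ${}_2F_1$ arising from the BPZ equations, not composition-sum identities of the present type.

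The paper's proof takes a route you do not mention. It realizes $f_n$ as the determinant of an explicit $(n{+}1)\times(n{+}1)$ matrix $A_n$ with entries linear in $x,y$, performs row operations to obtain a matrix $A_n'$, and then finds an explicit intertwiner $P_n$ with $A_n'P_n=P_nA_n''$, where $A_n''$ has a block structure: its central $(n{-}1)\times(n{-}1)$ block coincides with $A_{n-2}'$, and the complementary $2\times 2$ block has determinant $g_n(x,y)$. This gives directly the recursion $\det A_n=g_n\cdot\det A_{n-2}$---precisely the relation $f_n=g_n f_{n-2}$ you hoped to isolate---with the base cases $n=0,1$ immediate. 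No summation over compositions is ever confronted head-on; the determinantal repackaging makes the recursion visible through linear algebra alone.
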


\begin{proof}
The polynomial $f_n$ can be realized as the determinant of the following matrix $A_n$: 
$$
\left( 
\begin{array}{cccccc} 
t -n & t - y -n+1 & t - 2y - n+ 2 & \ddots & \ddots & t- ny \\
-\frac{1\cdot n}{\varkappa} & t - n+1 & t- y  -n +2 & t - 2y  - n +3 & \ddots & t - (n-1)y  \\
0 & - \frac{ 2(n-1) }{\varkappa} & t  - n+2 & \ddots & \ddots & t - (n-2)y  \\
0 &  0 & \ddots & \ddots & \ddots & \ddots \\
0 & 0 & 0 & \ddots & t  -1 & t - y  \\
0 & 0 & 0 & 0 & - \frac{n \cdot 1}{\varkappa} & t \\
\end{array}
\right)
$$
where $t = x -y - \triangle(n)$. 
Subtracting the second row from the first, then the third row from the second, and so on, finally the $(n+1)$-th row from the $n$-th, we obtain the following matrix $A_n'$ with the same determinant
$$
\left( 
\begin{array}{cccccc} 
t -n + \frac{1\cdot n}{\varkappa} & -y & -y & \ddots & \ddots & -y \\
-\frac{1\cdot n}{\varkappa} & t  - n+1 + \frac{ 2(n-1) }{\varkappa} & - y  & -y  & \ddots & -y \\
0 & - \frac{ 2(n-1) }{\varkappa} & t - n+2 + \frac{ 3(n-2) }{\varkappa}  & \ddots & \ddots & -y \\
0 &  0 & \ddots & \ddots & \ddots & \vdots \\
0 & 0 & 0 & \ddots & t -1+ \frac{ n \cdot 1 }{\varkappa} & - y \\
0 & 0 & 0 & 0 & - \frac{n \cdot 1}{\varkappa} & t  \\
\end{array}
\right). 
$$
Let $P_n = (p_{i, j})_{1 \leq i, j \leq n+1}$ be the following matrix:
$$
\left(
\begin{array}{cccccc}
n - \frac{n}{\varkappa} + (n-1)y & y & y & \cdots & \cdots & y \\
n-1  - \frac{n-2}{\varkappa} + (n-2) y & \frac{2}{\varkappa}-1 & y & \ddots & \ddots & y \\
n-2 - \frac{n-4}{\varkappa} + (n-3) y & - \frac{2}{\varkappa} & \frac{6}{\varkappa} -2 & y & \cdots &  y \\
\vdots & \ddots & \ddots & \ddots & \ddots & \vdots \\
1+ \frac{n-2}{\varkappa} &  \cdots & \ddots & -\frac{(n-2)(n-1)}{\varkappa} & \frac{(n-1)n}{\varkappa} - n +1 & y \\
\frac{n}{\varkappa} & \cdots & \cdots & 0 & -\frac{(n-1)n}{\varkappa} & 0
\end{array}
\right)
$$
i.e.
$p_{i, 1} = n - i+1 - \frac{ n -2i+2}{ \varkappa} + (n-i) y$ for $1\leq i \leq n-1$; 
$p_{n, 1} = 1 + \frac{n-2}{\varkappa}$; 
$p_{n+1, 1} = \frac{n}{\varkappa}$; 
$p_{i, j} = y$ for $1 \leq i < j \leq n+1$; 
$p_{i, i} = \frac{(i-1)i}{\varkappa} - i+1$ for $2 \leq i \leq n$; 
$p_{n+1, n+1} =0$; 
$p_{i+1, i} = - \frac{(i-1)i}{\varkappa}$ for $2 \leq i \leq n$; 
$p_{i, j} =0$ for $2 \leq j \leq n-1, i > j+1$. 
Then 
$$
A_n' P_n = P_n A_n''
$$
where $A_n''$ is the matrix
$$
\left( 
\begin{array}{cccccc}
t - n + \frac{n}{\varkappa} &  0 & \cdots & \cdots & 0 & -y \\
\ast & t - n +1 + \frac{2(n-1)}{\varkappa} & -y & \cdots & -y & 0 \\
\ast & - \frac{n-2}{\varkappa} & t - n + 2 + \frac{3(n-2)}{\varkappa} & \ddots & -y & 0 \\
\vdots & \ddots & \cdots & \ddots & \ddots & \vdots \\
\ast & 0 & \cdots & - \frac{n-2}{\varkappa} & t-1+ \frac{n}{\varkappa} & 0 \\
-\frac{n^2}{\varkappa} & 0 & \cdots & 0 & 0 & t 
\end{array}
\right). 
$$
Note that 
$$
\triangle(n) - \triangle(n-2i) = \frac{i(n+1-i)}{\varkappa} - i, 
$$
hence for $1\leq i \leq n-1$, 
we have
\begin{eqnarray}
A_n'' [i+1, i+1] 
& = &  t - i + \frac{i(n+1-i)}{\varkappa} 
= x - y - \triangle(n)  - i + \frac{i(n+1-i)}{\varkappa} 
\nonumber \\
& = & x - y - \triangle(n-2i)
= x - y - \triangle(n-2) - i+1 + \frac{(i-1)(n-i)}{\varkappa}. 
\nonumber 
\end{eqnarray}
This means that 
$$
A_n''[2, \cdots, n; 2, \cdots, n] = A_{n-2}', 
$$
where $A_n''[2, \cdots, n; 2, \cdots, n]$ is the submatrix of $A_n''$ 
obtained by deleting the first and $n+1$-th rows and columns. 
Now, 
$$
\text{det} A_n' = \text{det} A_n'' 
= \text{det} A_{n-2}' \text{det} 
\left(
\begin{array}{cc}
t-n+\frac{n}{\varkappa} & -y \\
-\frac{n^2}{\varkappa} & t
\end{array}
\right). 
$$
Since $t = x - y - \triangle(n)$, 
we have 
$t - n + \frac{n}{\varkappa} = x - y - \triangle(-n)$, 
hence
$$
\text{det} 
\left(
\begin{array}{cc}
t-n+\frac{n}{\varkappa} & -y \\
-\frac{n^2}{\varkappa} & t
\end{array}
\right)
= g_n (x, y). 
$$
We obtained a recursion 
$$
\text{det} A_n' = g_n \, \text{det} A_{n-2}'. 
$$
Obviously
$\text{det} A_0' = g_0$ and $\text{det} A_1' = g_1$, 
the lemma is now clear. 
\end{proof}


\begin{lemma}
Suppose $M^i = \oplus_{n=0}^\infty M^i_n (i = 1, 2, 3)$ are irreducible representations of a VOA $V$ 
and $L_0|_{M^i_n} =(h_i +n) \text{Id}$. 
An intertwining operator $I(\cdot, z)$ of type 
$\left( \begin{array}{ccc} \quad & M^{3} & \quad \\  M^{1} & \quad & M^{2} \end{array} \right)$ 
can be written as 
$$
I(v, z) = \sum_{n \in \mathbb Z} v(n) z^{-n-1} z^{-h_1-h_2+h_3}
$$ 
so that for homogeneous $v \in M^1$, 
$$
v(n) M^2_m \subset M^3_{m-n-1+\text{deg} v}
$$
where 
$\text{deg} v = k$ means that $v \in M^1_k$. 
In particular, 
we have 
$v (\text{deg} \, v -1) M^2_0 \subset M^3_0$.
The map 
$$
A(M^1) \otimes_{A(V)} M^2_0 \to M^3_0; \qquad [v] \otimes m \mapsto v(\text{deg} v-1) m
$$
is a map of $A(V)$-modules. 
\end{lemma}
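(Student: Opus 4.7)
The plan is to verify, in sequence, the degree formula for the modes $v(n)$, the preservation of top levels by $o(v) := v(\deg v - 1)$, and the bimodule compatibility of the resulting map to $M^3_0$.

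For the first two assertions, I would apply the standard $L_0$-bracket for an intertwining operator, $[L_0, I(v,z)] = z\, d/dz\, I(v,z) + I(L_0 v, z)$. Substituting the mode expansion $I(v,z) = \sum_n v(n) z^{-n-1+h_3-h_1-h_2}$ and matching coefficients for $v$ homogeneous gives $[L_0, v(n)] = (-n-1+h_3-h_2+\deg v)\, v(n)$; combined with $L_0 m = (h_2+\ell)m$ for $m \in M^2_\ell$, this forces $v(n) m \in M^3_{\ell-n-1+\deg v}$, and hence $o(v)(M^2_0) \subset M^3_0$.

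The core of the argument is the bimodule compatibility, which reduces to three identities for $m \in M^2_0$, $a \in V$ homogeneous, and $v \in M^1$ homogeneous: the vanishing $o(w)m = 0$ for $w = \text{Res}_z\!\left(Y_{M^1}(a,z)(z+1)^{\deg a} z^{-2} v\right) \in O(M^1)$; the left equivariance $o(a \ast v)m = o(a)\, o(v)\, m$ with $o(a)$ on the right acting on $M^3_0$; and the balancing identity $o(v \ast a) m = o(v)\, o(a)\, m$ with $o(a)$ on the right acting on $M^2_0$ through its $A(V)$-module structure. Each follows by applying the Jacobi identity for intertwining operators to $Y(a, z_1)$ and $I(v, z_2)$, multiplying by $(z_1+1)^{\deg a}$ together with $z_1^{-1}$ or $z_1^{-2}$, taking residues in $z_0, z_1$, and projecting to the top level in $z_2$. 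These are direct adaptations of Zhu's residue computations in [Zh] for the case $M^1 = V$: the factor $(z+1)^{\deg a}/z$ reproduces the multiplications $\ast$, while $(z+1)^{\deg a}/z^2$ produces elements of $O(\cdot)$.

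I expect the main technical hurdle to be the noninteger shift $z_2^{h_3-h_1-h_2}$ carried by $I(v, z_2)$. The clean way to handle it is to perform all residues in $z_0$ and $z_1$ first, treating $z_2$ as a formal variable with this fractional factor kept separate, so that it contributes only an overall shift in the mode extracted at the end. With this convention the three identities reduce to finite binomial sums (using that sufficiently positive modes of $Y_{M^2}(a, z)$ annihilate $m \in M^2_0$), and together they show that $[v] \otimes m \mapsto o(v) m$ descends to $A(M^1) \otimes_{A(V)} M^2_0$ and is $A(V)$-linear on the left, completing the proof.
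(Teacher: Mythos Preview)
The paper does not supply a proof for this lemma; it is stated as a known fact from Zhu's $A(V)$-theory (see [Zh], [FZ]), and the subsequent discussion simply uses the resulting linear map $\pi$. Your proposal is a correct outline of the standard proof: the $L_0$-commutator computation giving the grading shift is right, and the three residue identities you isolate---vanishing on $O(M^1)$, left $A(V)$-equivariance, and the balancing over $A(V)$---are exactly what one checks via the Jacobi identity for intertwining operators, following Zhu's original argument for $M^1=V$. Since the paper omits the proof entirely, there is nothing to compare beyond noting that your sketch reproduces the classical argument.
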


We denote by $I_{M^1 M^2}^{M^3}$ the space of intertwining operators. 
The above lemma implies that there is a linear map 
$$
\pi: I_{M^1 M^2}^{M^3} \to \text{Hom}_{A(V)} (A(M^1) \otimes_{A(V)} M^2_0, M^3_0). 
$$


\begin{lemma}{[FZ]} \label{fzf}
If the vertex operator algebra $V$ is rational, 
then the map $\pi$ is an isomorphism. 
\end{lemma}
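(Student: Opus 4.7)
The plan is to establish injectivity and surjectivity of $\pi$ separately; injectivity follows from general properties of intertwining operators, while surjectivity is the substantive content and relies essentially on the rationality assumption to construct an intertwining operator from any $A(V)$-bimodule homomorphism. For injectivity, suppose $I \in I_{M^1 M^2}^{M^3}$ satisfies $\pi(I) = 0$, so that $v(\text{deg } v - 1) m = 0$ for every homogeneous $v \in M^1$ and every $m \in M^2_0$. The Jacobi identity for intertwining operators, together with the fact that $M^1$ is generated by its top level $M^1_0$ under the negative modes of $V$, reduces showing $I = 0$ to showing $I(v, z) m = 0$ for $v \in M^1_0$ and $m \in M^2_0$. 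For such $v$ the grading $v(n) M^2_0 \subset M^3_{\text{deg } v - 1 - n}$ forces $v(n) m = 0$ whenever $n > \text{deg } v - 1$, while the hypothesis handles $n = \text{deg } v - 1$. For $n < \text{deg } v - 1$, one applies the $L_{-1}$-derivative identity $\frac{d}{dz} I(v, z) = I(L_{-1} v, z)$ together with the commutator and associativity formulas for intertwining operators to propagate the vanishing to lower modes, invoking the fact that $M^3$ is generated by $M^3_0$ under the $V$-action. A parallel argument extends the vanishing from $M^2_0$ to all of $M^2$.

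For surjectivity, given $\phi \in \text{Hom}_{A(V)}(A(M^1) \otimes_{A(V)} M^2_0, M^3_0)$, the strategy is to construct an intertwining operator $I_\phi$ such that $\pi(I_\phi) = \phi$. The Frenkel-Zhu approach is to attach to any $A(V)$-module $U$ a ``bottom-induced'' $V$-module $\bar M(U)$ whose top level is $U$, together with a canonical intertwining operator of the required type valued in $\bar M(U)$. Applying this to $U = A(M^1) \otimes_{A(V)} M^2_0$ and invoking rationality, $\bar M(U)$ decomposes as a finite direct sum of irreducible $V$-modules; the homomorphism $\phi$ selects the component isomorphic to $M^3$, yielding a $V$-module map $\bar M(U) \to M^3$ that restricts to $\phi$ on top levels. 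Post-composing the canonical intertwining operator with this map produces $I_\phi \in I_{M^1 M^2}^{M^3}$, and a direct computation on top levels confirms $\pi(I_\phi) = \phi$.

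The main obstacle is the construction of $\bar M(U)$ itself and the verification that the operators it carries satisfy the full Jacobi identity for an intertwining operator. This is the heart of [FZ] and requires a delicate analysis of how the $A(V)$-bimodule structure on $A(M^1)$ encodes the compatibility between the modes of $I(v, z)$ and the $V$-action on $M^2$ and $M^3$. Rationality enters decisively at the decomposition step; without it, there would be no guarantee that a homomorphism from the top level of $\bar M(U)$ into $M^3_0$ lifts to a $V$-module map $\bar M(U) \to M^3$ itself, and surjectivity of $\pi$ would fail.
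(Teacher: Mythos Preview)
The paper does not prove this lemma at all; it is stated with the citation [FZ] and no proof environment follows. Your proposal is a reasonable outline of the original Frenkel--Zhu argument (induced module from an $A(V)$-module, canonical intertwining operator, then rationality to split off the $M^3$-summand), so in that sense you have supplied more than the paper does. One caveat: your injectivity sketch is a bit loose---the claim that vanishing on top levels propagates to all of $M^2$ and $M^3$ via the $L_{-1}$-derivative and commutator formulas is correct in spirit but requires the contragredient-module trick (pairing against ${M^3}^*$) rather than a direct inductive argument on modes, and this is where irreducibility of $M^3$ (or the generalized-Verma hypothesis in Li's refinement, cf.\ the Remark following the lemma) is actually used.
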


\begin{remark}
If $V$ is not rational, 
it is proved in [L, Theorem 2.11] that 
$\pi$ is an isomorphism if we require that $M^1, M^2, M^3$ are lowest weight $V$-modules 
such that $M^2$ and the contragredient module of $M^3$ are generalized Verma $V$-modules. 
For definitions of lowest weight modules and generalized Verma modules, see [L]. 
Under these assumptions, 
the module $M^3$ is irreducible. 
Examination of the proof shows that 
the theorem still holds 
if we forsake the lowest-weight assumption on $M^3$, 
instead only requiring that $M^3$ has a generalized Verma $V$-module as its dual; 
this way, $M^3$ does not have to be irreducible either. 
In the Virasoro case, 
generalized Verma modules coincide with the Verma modules we defined. 
\end{remark}

\begin{define}\label{contra}
Let $M = \bigoplus_{h\in \mathbb C} M_h$ be a $V$-module. 
Let $M^* = \bigoplus_h M_h^*$ and define
$$
\langle Y^*(a, z) \, f,  \,v  \, \rangle
= \langle  \,f  \,,  Y(e^{z L_1} (-z^{-2})^{L_0} a, z^{-1})  \, v \rangle
$$
for $a \in V, f \in M^*, v \in M$. 
Then $(M^*, Y^*)$ carries the structure of a $V$-module. 
We call $M^*$ the contragredient module of $M$. 
\end{define}

We have the following generalization of Lemma \ref{fzf} and [L, Theorem 2.11]. 

\begin{lemma}\label{generalized}
If $M^1$ is a lowest weight $V$-module, 
$M^2$ and ${M^3}^*$ (the contragredient module of $M^3$) 
are generalized Verma $V$-modules, 
then the linear map $\pi$ is an isomorphism. 
\end{lemma}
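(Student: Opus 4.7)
The plan is to revisit the proof of [L, Theorem 2.11]. Li establishes the same isomorphism under the additional hypothesis that $M^3$ is a lowest weight module, and I would argue that this hypothesis is used only through consequences of it that already hold under our weaker assumption that $(M^3)^*$ is a generalized Verma module. The key reformulation is to work entirely with matrix coefficients $\langle f, I(v,z) m\rangle$, where $f$ ranges over $(M^3)^*$, rather than with values in $M^3$ itself: under the hypothesis on $(M^3)^*$, such coefficients are determined by their values at $f \in (M^3)^*_0$ and they separate intertwining operators, which are precisely the two properties of $M^3$ that Li's argument exploits.

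For injectivity, I would take $I \in I_{M^1 M^2}^{M^3}$ with $\pi(I) = 0$ and show that every matrix coefficient $\langle f, I(v,z) m\rangle$ vanishes. The vanishing of $\pi(I)$ gives the base case with $f \in (M^3)^*_0$, $v$ a lowest weight generator of $M^1$, and $m \in M^2_0$. To propagate, I would alternately apply the commutator formula $[a(k), I(v,z)] = \sum_{j \geq 0} \binom{k}{j} z^{k-j} I(a(j) v, z)$ and the contragredient action from Definition \ref{contra}, using the lowest weight hypothesis on $M^1$ to expand $v$, the generalized Verma hypothesis on $M^2$ to expand $m$, and the generalized Verma hypothesis on $(M^3)^*$ to expand $f$.

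For surjectivity, given an $A(V)$-module homomorphism $\phi: A(M^1) \otimes_{A(V)} M^2_0 \to M^3_0$, I would construct a candidate $I$ by first specifying its matrix coefficients $\langle f, I(v_0, z) m\rangle$ at the base level (lowest weight generator $v_0$, $m \in M^2_0$, $f \in (M^3)^*_0$) directly from $\phi$, with leading term $\langle f, \phi([v_0] \otimes m)\rangle z^{h_3 - h_1 - h_2}$ and lower modes forced by the $L_0$- and $L_{-1}$-conjugation rules. Then I would extend in three stages, each governed by one hypothesis: extend $f$ using that $(M^3)^*$ is generalized Verma, extend $m$ using that $M^2$ is generalized Verma, and extend $v$ using that $M^1$ is lowest weight. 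At each stage the extension is forced by the required Jacobi identity for $I$, so well-definedness and the intertwining axioms are verified simultaneously.

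The main obstacle will be establishing the full intertwining-operator Jacobi identity for the candidate $I$, rather than only its weaker commutator form. This is where the generalized Verma hypotheses do their essential work: because $M^2$ and $(M^3)^*$ are each freely generated from their top components, Jacobi-type identities descend uniquely from the tops, where they reduce to the $A(V)$-bimodule compatibility built into $\phi$. The remaining task is to confirm, by inspection, that Li's descent argument never genuinely requires $M^3$ to be lowest weight; each step goes through using only the dual Verma structure on $(M^3)^*$.
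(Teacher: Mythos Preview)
Your proposal is correct and follows exactly the approach the paper indicates: the paper itself gives no detailed proof of this lemma, but the preceding Remark states that ``examination of the proof'' of [L, Theorem 2.11] shows the lowest-weight hypothesis on $M^3$ can be dropped once $(M^3)^*$ is assumed generalized Verma, which is precisely what you outline. Your sketch is in fact more explicit than what the paper provides, but it is the same argument.
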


The Verma module $M(h, c)$ has the following $\mathbb N$-grading: 
$$
\text{deg } L_{-i_1} L_{-i_2} \cdots L_{-i_n} v_{h, c} = i_1 + \cdots i_n. 
$$
There is a unique symmetric bilinear form $(, )$ on $M(h, c)$ which respects the above grading and satisfies 
$$
(v_{h, c}, v_{h, c}) = 1,  \qquad
(L_n v, v') = (v, L_{-n} v'), 
$$ 
for $n \in \mathbb Z, v, v' \in M(h, c)$. 
The kernel of this bilinear form is $J(h, c)$. 
Let $M(h, c)^*$ be the contragredient module of $M(h, c)$ as defined in \ref{contra}. 
The Virasoro algebra acts on $M(h, c)^*$ as follows: 
$$
\langle L_n \, f, v \rangle  = \langle f, \, L_{-n} \, v \rangle, 
\qquad \underline c \cdot f = c \, f, 
$$
for $f \in M(h, c)^*, v \in M(h, c)$. 
The bilinear form on $M(h, c)$
induces a map of Virasoro modules 
$$
\tau: M(h, c) \to M(h, c)^*
$$
with kernel $J(h, c)$. 
Its image, 
isomorphic to $L(h, c)$, 
is the unique irreducible submodule of $M(h, c)^*$. 

\begin{lemma}
For any $h_i \in \mathbb C, i = 1, 2, 3$, $c \in \mathbb C$,  
the space of intertwining operators 
$
I_{M(h_1, c) M(h_2, c)}^{M(h_3, c)^*}
$
has dimension $1$.
\end{lemma}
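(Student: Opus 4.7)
The plan is to invoke Lemma~\ref{generalized} and reduce the problem to a concrete $A(M_c)$-module computation, which in turn reduces, via Lemmas \ref{a}, \ref{aa}, \ref{3}, to the very simple fact that $\mathrm{Hom}_{\mathbb C[x]}(\mathbb C[x], \mathbb C_{h_3})$ is one-dimensional.

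First I would verify the hypotheses of Lemma~\ref{generalized} with $M^1 = M(h_1,c)$, $M^2 = M(h_2,c)$, and $M^3 = M(h_3,c)^*$. Here $M^1$ is lowest-weight, $M^2$ is a generalized Verma module for $M_c$ (in the Virasoro case the generalized Verma modules coincide with the Verma modules $M(h,c)$), and ${M^3}^* = M(h_3,c)^{**} \cong M(h_3,c)$, which is also a generalized Verma module. The required canonical identification $M(h_3,c)^{**} \cong M(h_3,c)$ uses only that $M(h_3,c)$ is $\mathbb N$-graded with finite-dimensional homogeneous components. Hence Lemma~\ref{generalized} gives
\[
I_{M(h_1,c)\,M(h_2,c)}^{M(h_3,c)^*} \;\cong\; \mathrm{Hom}_{A(M_c)}\!\bigl(A(M(h_1,c)) \otimes_{A(M_c)} M(h_2,c)_0,\; M(h_3,c)^*_0\bigr).
\]

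Next I would identify each factor explicitly. By Lemma~\ref{3}, $A(M(h_1,c)) \cong \mathbb C[x,y]$ as an $A(M_c)$-bimodule, with $[\omega]$ acting on the left as multiplication by $x$ and on the right as multiplication by $y$. By Lemma~\ref{aa}, $M(h_2,c)_0 \cong \mathbb C_{h_2}$, the one-dimensional $\mathbb C[x]$-module on which $[\omega]$ acts as $h_2$. Finally, $M(h_3,c)^*_0$ is the linear dual of $M(h_3,c)_0 \cong \mathbb C$, and by Definition~\ref{contra} the action of $L_0 = \omega(1)$ on the top component is preserved under taking the contragredient, so $M(h_3,c)^*_0 \cong \mathbb C_{h_3}$ as an $A(M_c)$-module. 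Substituting these,
\[
A(M(h_1,c)) \otimes_{A(M_c)} M(h_2,c)_0 \;\cong\; \mathbb C[x,y] \otimes_{\mathbb C[y]} \mathbb C_{h_2} \;\cong\; \mathbb C[x],
\]
where in the last isomorphism $y$ has been specialized to $h_2$.

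It remains to compute $\mathrm{Hom}_{\mathbb C[x]}(\mathbb C[x], \mathbb C_{h_3})$. Any such homomorphism $\phi$ is determined by $\phi(1)$, and the requirement $\phi(x \cdot p) = h_3\,\phi(p)$ imposes $\phi(x^n) = h_3^n \phi(1)$, which is automatically consistent. Hence this Hom space is exactly one-dimensional, and the lemma follows.

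The only real obstacle is the appeal to Lemma~\ref{generalized}: one must check that $M(h_3,c)^{**}$ genuinely is a generalized Verma module, not merely one abstractly, so that the hypothesis of the lemma is satisfied on the nose. Everything else is bookkeeping in identifying the $A(M_c)$-bimodule structure and the left/right variables $x, y$ correctly when forming the tensor product.
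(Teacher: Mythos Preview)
Your proof is correct and follows essentially the same route as the paper: invoke Lemma~\ref{generalized}, then use the identifications $A(M_c)\cong\mathbb C[x]$, $A(M(h_1,c))\cong\mathbb C[x,y]$, and $M(h_i,c)_0\cong\mathbb C_{h_i}$ to reduce to the one-dimensionality of $\mathrm{Hom}_{\mathbb C[x]}(\mathbb C[x],\mathbb C_{h_3})$. You are simply more explicit than the paper in verifying the hypotheses of Lemma~\ref{generalized} (in particular the point that ${M^3}^*=M(h_3,c)^{**}\cong M(h_3,c)$ via finite-dimensionality of the graded pieces) and in identifying the top level of the contragredient, which the paper leaves implicit.
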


\begin{proof}
It follows from Lemma \ref{generalized} that the map $\pi$ is an isomorphism. 
Previous lemmas show that 
$$
A(M_c) \cong \mathbb C[x] \cong \mathbb C[y], 
\qquad 
A(M(h_1, c)) \cong \mathbb C[x, y]. 
$$
It is obvious that 
$$
\text{dim} \,\text{Hom}_{\mathbb C[x]} 
(\mathbb C[x, y] \otimes_{\mathbb C[y]} \mathbb C_{h_2},  \mathbb C_{h_3}) = 1. 
$$
Here the tensor product over $\mathbb C[y]$ is obtained by specializing $y$ to $h_2$, 
and $\mathbb C[x]$ acts on $\mathbb C_{h_3}$ by specializing $x$ to $h_3$.  
\end{proof} 

For each permutation $\{i, j, k\}$ of $\{ 1, 2, 3\}$, 
choose an intertwining operator 
$$
I_{i, j}^k(\cdot, z): M(h_i, c) \otimes M(h_j, c) \to M(h_k, c)^* \{ z \}
$$
such that 
$\langle v_{h_k, c}, v_{h_i, c}(-1) v_{h_j, c} \rangle =1$ and 
define a map 
$$
\psi_{i, j}^k: A(M(h_i, c)) \cong \mathbb C[x, y] \to \mathbb C;  \quad g(x, y) \mapsto g(h_k, h_j). 
$$
Note that the exponents of $z$ appearing in $I_{i, j}^k(\cdot, z)$ 
are all equal to $(h_k - h_i -h_j)$ ( mod $\mathbb Z$). 


\begin{lemma} \label{coeff}
Let $m_i \in M(h_i, c)$, $i =1, 2, 3$, be homogeneous vectors. 
Let $\rho = h_3 - h_1 - h_2$, 
then 
\begin{eqnarray}
\langle I_{1, 2}^3 (m_1, z) v_{h_2, c}, v_{h_3, c} \rangle 
& = & z^{\rho} z^{- \text{deg}\, m_1} \psi_{1, 2}^3 ([m_1]),  
\\
\langle I_{1, 2}^3 (v_{h_1, c}, z) m_2, v_{h_3, c} \rangle 
& = & z^{\rho} (-z)^{ -  \text{deg} \, m_2} \psi_{2, 1}^3 ([m_2]),
\\
\langle I_{1, 2}^3 (v_{h_1, c}, z) v_{h_2, c}, m_3 \rangle
& = & z^{\rho}  (-z)^{ \text{deg} \, m_3} \psi_{3, 1}^2 ([m_3]). 
\end{eqnarray}
\end{lemma}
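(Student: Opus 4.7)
The plan is to prove (2.6), (2.7), (2.8) in succession, using (2.6) as the key input for (2.7) and (2.8). For (2.6) I would expand $I_{1,2}^3(m_1, z) v_{h_2,c} = \sum_n m_1(n) v_{h_2,c}\, z^{-n-1+\rho}$. If $\deg m_1 = k$, the grading lemma preceding Lemma~\ref{fzf} places $m_1(n) v_{h_2,c}$ in $M(h_3,c)^*_{k-n-1}$, so pairing against $v_{h_3,c} \in M(h_3,c)_0$ selects only $n = k-1$:
$$\langle I_{1,2}^3(m_1, z) v_{h_2,c}, v_{h_3,c}\rangle = z^{\rho - k}\,\langle m_1(k-1) v_{h_2,c}, v_{h_3,c}\rangle.$$
By Lemma~\ref{generalized} the map $\pi$ is an isomorphism, so $[m_1]\otimes v_{h_2,c}\mapsto m_1(k-1)v_{h_2,c}$ descends to an $A(M_c)$-linear map. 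Under the identifications of Lemmas~\ref{a}, \ref{aa}, \ref{3} this is a $\mathbb{C}[x]$-linear map $\mathbb{C}[x,y]/(y-h_2)\to\mathbb{C}_{h_3}$, necessarily evaluation at $(x,y) = (h_3, h_2)$ up to a scalar; the normalization $\langle v_{h_1,c}(-1) v_{h_2,c}, v_{h_3,c}\rangle = 1$ pins this scalar to $1$, giving (2.6).

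For (2.7), I would invoke skew-symmetry of intertwining operators, which yields an identity of operators
$$e^{zL_{-1}}\, I_{1,2}^3(w_1, e^{i\pi} z) w_2 = \beta\, I_{2,1}^3(w_2, z) w_1$$
for some scalar $\beta$ fixed by one-dimensionality of the intertwining spaces. Specializing to $w_1 = v_{h_1,c}$, $w_2 = v_{h_2,c}$ and pairing against $v_{h_3,c}$ (using that $e^{zL_{-1}}$ on $M(h_3,c)^*$ is adjoint to $e^{zL_1}$ on $M(h_3,c)$ and $L_1 v_{h_3,c} = 0$), then applying (2.6), forces $\beta = e^{i\pi\rho}$. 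Setting $w_1 = v_{h_1,c}$, $w_2 = m_2$, rearranging as $I_{1,2}^3(v_{h_1,c}, z) m_2 = e^{i\pi\rho} e^{zL_{-1}} I_{2,1}^3(m_2, e^{-i\pi} z) v_{h_1,c}$ (substitute $z \mapsto e^{-i\pi}z$), pairing with $v_{h_3,c}$, and applying (2.6) to the right-hand side produces $e^{i\pi \deg m_2} z^{\rho - \deg m_2}\psi_{2,1}^3([m_2]) = z^\rho (-z)^{-\deg m_2}\psi_{2,1}^3([m_2])$.

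For (2.8), I would induct on $\deg m_3$. The base case $m_3 = v_{h_3,c}$ is (2.6) at $m_1 = v_{h_1,c}$. For the step, write $m_3 = L_{-n} m_3'$ with $n \geq 1$ and pass $L_{-n}$ to the dual side via $\langle f, L_{-n} v\rangle = \langle L_n f, v\rangle$. Using $L_n v_{h_2,c} = 0$, the answer reduces to $\langle [L_n, I_{1,2}^3(v_{h_1,c}, z)] v_{h_2,c}, m_3'\rangle$, where the intertwining commutator truncates (because $L_{k-1} v_{h_1,c} = 0$ for $k \geq 2$) to
$$[L_n, I_{1,2}^3(v_{h_1,c}, z)] = z^{n+1}\,\partial_z I_{1,2}^3(v_{h_1,c}, z) + (n+1) h_1 z^n\, I_{1,2}^3(v_{h_1,c}, z).$$
Substituting the inductive formula and using $\rho = h_3 - h_1 - h_2$, the resulting scalar $\rho + \deg m_3' + (n+1) h_1$ simplifies to $-h_2 + n h_1 + h_3 + \deg m_3'$. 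This matches the recursion $\varphi([L_{-n} v]) = (-1)^n (-x + n y + h) g_v$ for $L_0 v = h v$ from the proof of Lemma~\ref{poly}, evaluated at $(x,y) = (h_2, h_1)$ by $\psi_{3,1}^2$; the sign $(-1)^n$ combines with $(-z)^{\deg m_3'}$ to give $(-z)^{\deg m_3}$.

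The main obstacle is the careful sign and branch bookkeeping in the skew-symmetry step (2.7), where one must verify that $\beta = e^{i\pi\rho}$ recombines correctly with $(e^{-i\pi} z)^{\rho - \deg m_2}$ to yield $(-z)^{-\deg m_2}$. The induction for (2.8), while essentially arithmetic, depends critically on the explicit recursion for $[L_{-n} v]$ in $A(M(h,c))$ from the proof of Lemma~\ref{poly}.
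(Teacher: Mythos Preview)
Your argument is correct. For the first two identities your approach coincides with the paper's: the paper declares (2.1) clear and deduces (2.2) via the Huang--Lepowsky skew-symmetry isomorphism $\Omega_r$, which is exactly the operator identity $e^{zL_{-1}} I_{1,2}^3(w_1, e^{i\pi}z)w_2 = e^{i\pi\rho}\, I_{2,1}^3(w_2,z)w_1$ you invoke, with the same normalization check to pin down the scalar.

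For (2.3) the paper takes a different route: it applies, in addition to $\Omega_r$, the contragredient isomorphism $\mathcal A_s : I_{M(h_i,c)\,M(h_j,c)}^{M(h_k,c)^*} \to I_{M(h_i,c)\,M(h_k,c)}^{M(h_j,c)^*}$ from [HL], which swaps $M(h_2,c)$ and $M(h_3,c)$ and again reduces everything to (2.1). Your induction on $\deg m_3$ via the truncated commutator $[L_n, I_{1,2}^3(v_{h_1,c},z)] = z^{n+1}\partial_z I + (n+1)h_1 z^n I$ is a genuinely different, more elementary argument: it never leaves the single intertwining operator $I_{1,2}^3$, and the matching with $\psi_{3,1}^2$ comes out of the explicit recursion $[L_{-n} v]\mapsto (-1)^n(-x+ny+h)g_v$ in $A(M(h_3,c))$ rather than from an abstract symmetry. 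The paper's approach is more uniform and treats all three identities on the same footing; yours trades the machinery of $\mathcal A_s$ for a direct computation tied to the concrete bimodule description, which has the advantage of being self-contained within the paper's earlier lemmas.
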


\begin{proof}
(2.1) is clear.
Let 
$$
\Omega_r: I_{M(h_i, c) M(h_j, c)}^{M(h_k, c)^*} \to I_{M(h_j, c) M(h_i, c)}^{M(h_k, c)^*}, 
\qquad 
\mathcal A_s: I_{M(h_i, c) M(h_j, c)}^{M(h_k, c)^*} \to I_{M(h_i, c) M(h_k, c)}^{M(h_j, c)^*}
$$
be the isomorphisms defined in [HL].  
It is easy to check that 
$$
\Omega_r (I_{i, j}^k) = e^{(2r+1) \pi i [h_k - h_i - h_j] } I_{j, i}^k, 
\qquad 
\mathcal A_s (I_{i, j}^k) = e^{ (2s+1) \pi i h_i } I_{i, k}^j. 
$$
By the definition of $\Omega_r$, 
we have 
\begin{eqnarray}
& & 
\langle \Omega_r (I_{1, 2}^3) (m_2, z) v_{h_1, c}, v_{h_3, c} \rangle
= \langle e^{z L_{-1} } I_{1, 2}^3 (v_{h_1, c}, e^{(2r+1) \pi i} z) m_2, v_{h_3, c} \rangle
\nonumber \\
& = & 
\langle I_{1, 2}^3 (v_{h_1, c}, e^{(2r+1) \pi i} z) m_2, v_{h_3, c} \rangle
= e^{(2r+1) \pi i [\rho - \text{deg} \, m_2] } (I_{1, 2}^3 (v_{h_1, c}, z) m_2, v_{h_3, c} \rangle. 
\nonumber 
\end{eqnarray}
Since 
$$
\Omega_r (I_{1, 2}^3) = e^{(2r+1) \pi i \rho} I_{2, 1}^3
$$
and 
$$
\langle I_{2, 1}^3 (m_2, z) v_{h_1, c}, v_{h_3, c} \rangle 
= z^{\rho - \text{deg} \, m_2} \psi_{2, 1}^3 ([m_2]), 
$$
(2.2) follows. 
(2.3) can be proved similarly using $\Omega_r$ and $\mathcal A_s$. 
\end{proof}


\begin{lemma} \label{descend}
The fusion rule 
$\text{dim} \,\, I_{L(h_1, c) L(h_2, c)}^{L(h_3, c)} \leq 1$ and the equality holds if and only if the maps 
$\psi_{1, 2}^3$, $\psi_{2, 1}^3$, and $\psi_{3, 1}^2$ kill the subspaces $[J(h_1, c)]$, $[J(h_2, c)]$, and $[J(h_3, c)]$ respectively. 
\end{lemma}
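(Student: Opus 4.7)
The plan is to leverage the one-dimensionality of the Verma intertwining space from the preceding lemma. I would first produce a natural linear map
$$
I_{L(h_1, c) L(h_2, c)}^{L(h_3, c)} \longrightarrow I_{M(h_1, c) M(h_2, c)}^{M(h_3, c)^*}
$$
sending an operator $\bar I$ to its composition with the quotient maps $M(h_i, c) \to L(h_i, c)$ ($i=1,2$) on the two module arguments and the canonical inclusion $L(h_3, c) \hookrightarrow M(h_3, c)^*$ on the target. Because these factors are respectively surjective and injective, $\bar I$ is recovered from its image, so the map is injective; the codomain is one-dimensional, giving $\dim I_{L(h_1, c) L(h_2, c)}^{L(h_3, c)} \leq 1$ immediately.

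For equality, I would characterize when the distinguished Verma operator $I_{1,2}^3$ lies in the image of this embedding, i.e.\ when it descends to an intertwining operator between irreducibles. Descent is equivalent to three conditions: (a) $I_{1,2}^3$ annihilates $J(h_1, c) \otimes M(h_2, c)$; (b) it annihilates $M(h_1, c) \otimes J(h_2, c)$; (c) its image lies in $L(h_3, c)\{z\}$ viewed inside $M(h_3, c)^*\{z\}$. Each reduces to a matrix-element condition at the highest-weight vectors $v_{h_i, c}$ via the intertwining relation $[L_n, I(v, z)] = \sum_k \binom{n+1}{k+1} z^{n-k} I(L_k v, z)$, applied repeatedly to shift $L_{\pm n}$'s between the module arguments and the operator. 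Crucially, because each $J(h_i, c)$ is a Virasoro submodule, every shift originating inside $J(h_i, c)$ stays inside $J(h_i, c)$, so it suffices to test the vanishing on the highest-weight vectors of the unaffected slots.

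After this reduction, formulas (2.1)--(2.3) of Lemma \ref{coeff} translate conditions (a), (b), (c) directly into the vanishing of $\psi_{1,2}^3$, $\psi_{2,1}^3$, and $\psi_{3,1}^2$ on $[J(h_1, c)]$, $[J(h_2, c)]$, and $[J(h_3, c)]$ respectively, as required. The main subtle point is condition (c): turning ``image lies in $L(h_3, c)$'' into a pairing condition against $J(h_3, c)$ requires identifying the image of $\tau \colon M(h_3, c) \to M(h_3, c)^*$ with the annihilator of $J(h_3, c) \subset M(h_3, c)$ inside $M(h_3, c)^*$. This identification follows from $\ker \tau = J(h_3, c)$ together with the nondegeneracy of the induced form on the quotient $L(h_3, c)$. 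With it in hand, (c) becomes $\langle I_{1,2}^3(v_{h_1, c}, z) v_{h_2, c}, j_3 \rangle = 0$ for all $j_3 \in J(h_3, c)$, which is exactly the content of (2.3); the other two conditions are more direct consequences of (2.1) and (2.2).
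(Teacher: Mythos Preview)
Your proposal is correct and follows essentially the same approach as the paper's proof: both establish the injection into the one-dimensional Verma intertwining space, then reduce the descent conditions to matrix-element vanishing at highest-weight vectors and invoke (2.1)--(2.3). Your write-up is somewhat more explicit about the mechanics of the reduction (via the commutator formula) and about why the image of $\tau$ coincides with the annihilator of $J(h_3,c)$, but the argument is the same.
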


\begin{proof}
Since $L(h_i, c)$ is a quotient of $M(h_i, c)$ and a submodule of $M(h_i, c)^*$, 
it is clear that 
$$
\text{dim} I_{L(h_1, c) L(h_2, c)}^{L(h_3, c)} 
\leq \text{dim} I_{M(h_1, c) M(h_2, c)}^{M(h_3, c)^*} 
=1. 
$$
The equality holds if and only if $I_{1, 2}^3$ descends to an intertwining operator of the previous type. 
First, the image of $I_{1, 2}^3(\cdot, z)$ lying in $L(h_3, c)$ is equivalent to 
\begin{eqnarray}
\langle I_{1, 2}^3( m_1, z) m_2, m_3 \rangle =0 
\end{eqnarray}
for all $m_3 \in J(h_3, c)$ and $m_i \in M(h_i, c)$, $i=1, 2$. 
Since $M(h_1, c)$, $M(h_2, c)$ are generated by the lowest weight subspaces and $J(h_3, c)$ is a submodule of $M(h_3, c)$, 
(2.4) is equivalent to 
\begin{eqnarray}
\langle I_{1, 2}^3( v_{h_1, c}, z) v_{h_2, c}, m_3 \rangle =0 
\end{eqnarray}
for all $m_3 \in J(h_3, c)$. 
(2.3) implies that (2.5) holds 
iff the image of $J(h_3, c)$ in $A(M(h_3, c))$ is killed by the map $\psi_{1, 2}^3$. 
Similarly the intertwining operator 
$I_{1, 2}^3$ descends to $L(h_1, c)$ and $L(h_2, c)$ on the other two factors if and only if
$$
\langle I_{1, 2}^3(m_1, z) v_{h_2, c}, v_{h_3, c} \rangle 
= \langle I_{1, 2}^3(v_{h_1, c}, z) m_2, v_{h_3, c} \rangle =0
$$
for all $m_i \in J(h_i, c)$, $i = 1, 2$. 
Again by Lemma \ref{coeff}, 
this is equivalent to the images of $J(h_i, c)$ in $A(M(h_i, c))$, $i =1, 2$, 
being killed by the corresponding $\psi$-maps. 
\end{proof}

\begin{lemma} \label{bb1}
For any $n \in \mathbb Z$ and $s \in \mathbb N$, 
we have 
$$
\triangle (n+s) + \triangle(n -s) = 2 \triangle(n) + \triangle(s) + \triangle(-s), 
$$
$$
\triangle (n+s)  \triangle(n -s) = ( \triangle(n) - \triangle(s))  ( \triangle(n) -  \triangle(-s)). 
$$
\end{lemma}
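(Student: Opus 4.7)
The plan is to verify both identities as polynomial identities in $n$ and $s$ by direct substitution of the defining formula. The key convenience is to rewrite
$$\triangle(\lambda) = \frac{\lambda(\lambda - \beta)}{4\varkappa}, \qquad \beta := 2\varkappa - 2,$$
obtained by combining the two terms of the defining formula over a common denominator. This immediately gives $\triangle(-\lambda) = \frac{\lambda(\lambda + \beta)}{4\varkappa}$, making the symmetry $\lambda \leftrightarrow -\lambda$ transparent and reducing everything to routine algebra in $n, s, \beta$.

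For the additive identity, I would simply expand both sides using the factored form. On the left,
$$\triangle(n+s) + \triangle(n-s) = \frac{(n+s)(n+s-\beta) + (n-s)(n-s-\beta)}{4\varkappa} = \frac{2n^2 + 2s^2 - 2\beta n}{4\varkappa},$$
while the analogous expansion of $2\triangle(n) + \triangle(s) + \triangle(-s)$ yields the same expression; the linear-in-$s$ cross terms cancel automatically between $\triangle(s)$ and $\triangle(-s)$.

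For the multiplicative identity, the factored form gives
$$\triangle(n+s)\triangle(n-s) = \frac{(n+s)(n-s)(n+s-\beta)(n-s-\beta)}{16\varkappa^2}.$$
The crucial step is to factor the two differences $\triangle(n) - \triangle(\pm s)$ on the right side: grouping common linear factors yields
$$\triangle(n) - \triangle(s) = \frac{(n-s)(n+s-\beta)}{4\varkappa}, \qquad \triangle(n) - \triangle(-s) = \frac{(n+s)(n-s-\beta)}{4\varkappa}.$$
Multiplying these two expressions and rearranging the four linear factors reproduces exactly the formula for $\triangle(n+s)\triangle(n-s)$ above.

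There is no real obstacle—the lemma is purely computational. The only thing to spot is the factored form $\triangle(\lambda) = \frac{\lambda(\lambda - \beta)}{4\varkappa}$, after which both identities reduce to elementary manipulation of difference-of-squares patterns. These relations are presumably tailored for the polynomials $g_s(x,y)$ introduced just before Lemma \ref{matrix}, whose definition is built from exactly the combinations $\triangle(s) + \triangle(-s)$ and $\triangle(s)\triangle(-s)$ appearing on the right-hand sides.
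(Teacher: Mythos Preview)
Your proof is correct and is exactly the kind of direct computation the paper has in mind; the paper's own proof consists of the single sentence ``This is pure calculation.'' Your introduction of $\beta = 2\varkappa - 2$ and the factorizations $\triangle(n)-\triangle(\pm s) = \frac{(n\mp s)(n\pm s-\beta)}{4\varkappa}$ are a clean way to carry it out.
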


\begin{proof}
This is pure calculation. 
\end{proof}

\begin{prop}
For $k_i \in \mathbb N, i = 1, 2, 3$, we have 
the fusion rule 
$$
\text{dim} \, I_{L(\triangle(k_1), c) L(\triangle(k_2), c)}^{L(\triangle(k_3), c)} \leq 1. 
$$
The equality holds if and only if 
$k_1 + k_2 + k_3 \equiv 0 \text{ (mod } 2 \mathbb Z)$ and 
$| k_1 - k_2 | \leq k_3 \leq k_1 + k_2$. 
\end{prop}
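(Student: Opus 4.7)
The plan is to invoke Lemma \ref{descend} and then compute when the resulting vanishing conditions hold. Lemma \ref{descend} already delivers the bound $\dim \leq 1$ and reduces the equality case to showing that the three evaluation maps $\psi_{1,2}^3$, $\psi_{2,1}^3$, $\psi_{3,1}^2$ annihilate the images of $J(\triangle(k_1),c)$, $J(\triangle(k_2),c)$, $J(\triangle(k_3),c)$ in the respective $A(M(\triangle(k_i),c)) \cong \mathbb{C}[x,y]$. By Lemma \ref{sing} each such $J(\triangle(n),c)$ is generated as a Virasoro module by the single singular vector $S_{1,n+1}v_{\triangle(n),c}$, and Lemmas \ref{poly} and \ref{2} identify its image in $\mathbb{C}[x,y]$ with the principal ideal $(f_n(x,y))$. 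Since each $\psi$-map is just evaluation at a pair $(\triangle(k_a),\triangle(k_b))$, equality $\dim=1$ becomes the three scalar conditions
\[
f_{k_1}\bigl(\triangle(k_3),\triangle(k_2)\bigr)=f_{k_2}\bigl(\triangle(k_3),\triangle(k_1)\bigr)=f_{k_3}\bigl(\triangle(k_2),\triangle(k_1)\bigr)=0.
\]

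Next I would use Lemma \ref{matrix} to factor $f_n$ as a product of the quadratics $g_s$ with $s$ ranging over $\{0,2,\dots,n\}$ or $\{1,3,\dots,n\}$ according to the parity of $n$, so that each condition reduces to the vanishing of some factor $g_s$ at the relevant point. The core computation is to identify the zeros of $g_s(x,\triangle(b))$ as a quadratic in $x$: its sum of roots is $2\triangle(b)+\triangle(s)+\triangle(-s)$ and its product of roots is $(\triangle(b)-\triangle(s))(\triangle(b)-\triangle(-s))$, both read off directly from the explicit formula for $g_s$. Lemma \ref{bb1} (taken with $n=b$) says precisely that these two expressions coincide with $\triangle(b+s)+\triangle(b-s)$ and $\triangle(b+s)\triangle(b-s)$, so the roots are $\triangle(b+s)$ and $\triangle(b-s)$. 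Because $\triangle(a)-\triangle(a')=(a-a')(a+a'+2-2\varkappa)/(4\varkappa)$ is nonzero for distinct integers $a,a'$ when $\varkappa\notin\mathbb{Q}$, the map $\triangle$ is injective on $\mathbb{Z}$, whence $g_s(\triangle(a),\triangle(b))=0$ if and only if $|a-b|=s$.

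Assembling these facts gives $f_n(\triangle(a),\triangle(b))=0$ exactly when $|a-b|\leq n$ and $|a-b|\equiv n\pmod 2$. Applied to the three evaluation conditions, the parity constraints $|k_i-k_j|\equiv k_k\pmod 2$ collapse to the single condition $k_1+k_2+k_3\equiv 0\pmod 2$, and the three inequalities $|k_3-k_2|\leq k_1$, $|k_3-k_1|\leq k_2$, $|k_1-k_2|\leq k_3$ are jointly equivalent to $|k_1-k_2|\leq k_3\leq k_1+k_2$ (the third is redundant once the first two are imposed). This yields exactly the conditions in the proposition. The main technical step is the identification of the roots of $g_s(x,\triangle(b))$ via Lemma \ref{bb1}; everything else is bookkeeping involving the bimodule structure and the triangle-inequality/parity combinatorics.
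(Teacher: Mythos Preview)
Your proposal is correct and follows essentially the same route as the paper: invoke Lemma \ref{descend} to reduce to three scalar vanishing conditions on the $f_{k_i}$, factor via Lemma \ref{matrix} into the quadratics $g_s$, and use Lemma \ref{bb1} to identify the roots of $g_s(x,\triangle(b))$ as $\triangle(b\pm s)$, yielding the parity and triangle-inequality constraints. Your explicit remark that $\triangle$ is injective on $\mathbb{Z}$ (needed to pass from $\triangle(a)=\triangle(b\pm s)$ to $|a-b|=s$) is a detail the paper leaves implicit, and your variable ordering in the $f_{k_i}$ conditions matches the definition of the $\psi$-maps more literally than the paper's display (2.6)--(2.8), though the resulting conditions are equivalent.
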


\begin{proof}
By Lemma \ref{descend} and Lemma \ref{poly}, 
the fusion rule is $1$ if and only if 
\begin{eqnarray}
f_{k_1} (\triangle(k_2), \triangle(k_3)) & = & 0, \\
f_{k_2} (\triangle(k_1), \triangle(k_3)) & = & 0, \\
f_{k_3} (\triangle(k_1), \triangle(k_2)) & = & 0. 
\end{eqnarray}
Recall the polynomials 
$$
g_s (x, y) 
= (x - y)^2 - (\triangle(s) + \triangle(-s)) (x+y) + \triangle(s) \triangle(-s) 
$$
$$
= x^2 - (2y + \triangle(s) + \triangle(-s)) x + (y - \triangle(s)) (y- \triangle(-s)). 
$$
Lemma \ref{bb1} shows that if we set $y = \triangle(n)$ for some $n \in \mathbb Z$, 
then the solutions to the equation $g_s (x, \triangle(n)) = 0$ are
$\triangle(n+s)$ and $\triangle(n-s)$. 
Therefore, 
by Lemma \ref{matrix}, 
(2.6) is true iff $k_2 \in \{ k_3 + k_1, k_3 + k_1 -2, \cdots, k_3 - k_1\}$, 
i.e. 
\begin{eqnarray}
k_1 + k_2 + k_3 \equiv 0 \text{ (mod } 2 \mathbb Z)
\end{eqnarray}
and  
$$
k_3 - k_1 \leq k_2 \leq k_3 + k_1. 
$$
Similarly, 
(2.7) (resp. 2.8) is true iff (2.9) is true and 
$$
k_3 - k_2 \leq k_1 \leq k_3 + k_2 \quad 
(\text{ resp. } k_2 - k_3 \leq k_1 \leq k_2 + k_3). 
$$
The proposition is now clear. 
\end{proof}

\section{BPZ-equations and locality}

The matrix coefficients of the iterates of intertwining operators between irreducible Virasoro modules satisfy the so-called BPZ-equations ([BPZ]). 
These differential equations come from the null vectors. 
The null vector in $L(\triangle(1), c)$ yields a second-order differential equation; 
null vectors of higher conformal weights give higher-order differential equations. 

We first derive the operator version of the second-order equation. 

\begin{lemma}
Let $\Phi(\cdot, z): L(\triangle(1), c) \to \text{End} (M, M')  \{z\}$ be an intertwining operator. 
Then the formal power series $\Phi(v_{\triangle(1), c}, z)$ with coefficients in $\text{End} (M, M')$ satisfies the following equation
$$
\frac{d^2}{d z^2} \Phi(v_{\triangle(1), c}, z) 
= \frac{1}{\varkappa} : L(z) \Phi(v_{\triangle(1), c}, z): .
$$
\end{lemma}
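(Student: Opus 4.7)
The plan is to derive the equation directly from the level-two null vector in $L(\triangle(1), c)$. By Example \ref{null}, the vector
$$
(L_{-1}^2 - \varkappa^{-1} L_{-2}) v_{\triangle(1), c}
$$
is singular in $M(\triangle(1), c)$ and generates $J(\triangle(1), c)$; hence it vanishes in $L(\triangle(1), c)$. Applying the intertwining operator $\Phi(\cdot, z)$ to this vector yields the identity
$$
\Phi(L_{-1}^2 v_{\triangle(1), c}, z) \;=\; \varkappa^{-1} \, \Phi(L_{-2} v_{\triangle(1), c}, z)
$$
as formal series in $\mathrm{End}(M, M')\{z\}$.

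Next I would translate each side into the form appearing in the statement. The first step is the standard $L_{-1}$-derivative property for intertwining operators, $\Phi(L_{-1} u, z) = \frac{d}{dz}\Phi(u, z)$, applied twice to rewrite the left-hand side as $\frac{d^{2}}{dz^{2}}\Phi(v_{\triangle(1), c}, z)$. The second step is the key identification
$$
\Phi(L_{-2} v_{\triangle(1), c}, z) \;=\; :L(z)\,\Phi(v_{\triangle(1), c}, z):\;,
$$
which holds because $v_{\triangle(1), c}$ is a highest-weight (primary) vector: $L_{n} v_{\triangle(1), c} = 0$ for $n > 0$ and $L_0 v_{\triangle(1), c} = \triangle(1) \, v_{\triangle(1), c}$. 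To establish this identification I would expand the commutator formula for an intertwining operator with the Virasoro modes, namely
$$
[L_{m}, \Phi(v, z)] \;=\; \sum_{k \geq 0} \binom{m+1}{k} z^{m+1-k} \, \Phi(L_{k-1} v, z),
$$
and specialize to the primary vector $v = v_{\triangle(1), c}$, so that only the terms $k = 0, 1, 2$ survive on the right. Then, separating $L(z) = L(z)_{-} + L(z)_{+}$ into its annihilation and creation parts and comparing modes, the regular part of the OPE of $L(z_{1})$ with $\Phi(v_{\triangle(1), c}, z)$ at $z_{1} = z$ is precisely the normal-ordered product, which equals $\Phi(\omega_{-1} v_{\triangle(1), c}, z) = \Phi(L_{-2} v_{\triangle(1), c}, z)$.

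Combining these two steps gives
$$
\frac{d^{2}}{dz^{2}}\Phi(v_{\triangle(1), c}, z) \;=\; \frac{1}{\varkappa} \, :L(z)\,\Phi(v_{\triangle(1), c}, z):
$$
as required. The main obstacle is the bookkeeping in verifying the identity $\Phi(L_{-2} v, z) = :L(z)\Phi(v, z):$ for primary $v$: one must be careful with the two distinct expansion regions (for $L(z)_{-}$ and $L(z)_{+}$) and track which terms contribute to the normal-ordered product versus the singular part of the OPE. Once the primary condition kills the singular terms, however, the identification drops out cleanly and the BPZ equation follows immediately from the null-vector relation.
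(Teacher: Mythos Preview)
Your proof is correct and follows essentially the same approach as the paper: use the vanishing of the null vector $(L_{-1}^2 - \varkappa^{-1} L_{-2}) v_{\triangle(1), c}$ in $L(\triangle(1), c)$, then invoke the $L_{-1}$-derivative property and the normal-ordered product identity $\Phi(L_{-2} v, z) = {:}L(z)\Phi(v, z){:}$ to rewrite both sides. The paper simply cites ``properties of intertwining operators'' for these two facts; you supply more detail. One small remark: the identity $\Phi(\omega_{-1} v, z) = {:}L(z)\Phi(v, z){:}$ holds for \emph{any} $v$ in the module (it is the general iterate/associativity formula for intertwining operators), so the primality of $v_{\triangle(1),c}$ is not actually needed at that step---your commutator argument establishes it in this special case, but the justification ``because $v$ is primary'' is slightly misleading.
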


\begin{proof}
The null vector $( L_{-1}^2 - \varkappa^{-1} L_{-2}) v_{\triangle(1), c}$ in $M(\triangle(1), c)$ 
(see Example \ref{null}) 
is killed in the irreducible module $L(\triangle(1), c)$. 
Therefore, 
the intertwining operator associated to it
$$
\Phi(( L_{-1}^2 - \varkappa^{-1} L_{-2}) v_{\triangle(1), c}, z)
$$ 
is zero. 
The properties of intertwining operators imply that 
\begin{eqnarray}
\Phi ( L_{-1}^2 v_{\triangle(1), c}, z) & = & 
\frac{d^2}{d z^2} \Phi(v_{\triangle(1), c}, z), 
\nonumber \\
\Phi ( L_{-2} v_{\triangle(1), c}, z) & = & 
: L(z) \Phi(v_{\triangle(1), c}, z): .
\nonumber
\end{eqnarray}
The lemma is now clear. 
\end{proof}

For any $n\geq 0$, 
denote by $\Phi^+(\cdot, z)$ and $\Phi^-(\cdot, z)$ the intertwining operators 
$$
\Phi^+(\cdot, z): L(\triangle(1), c) \otimes L( \triangle(n), c) \to L( \triangle(n+1), c) \{z\}
$$
$$
\Phi^-(\cdot, z): L(\triangle(1), c) \otimes L( \triangle(n), c) \to L( \triangle(n-1), c) \{z\}
$$ 
such that 
$$
(v_{\triangle(n+1), c}^*,  \Phi^+(v_{\triangle(1), c}, z) v_{\triangle(n), c} )
= z^{\triangle(n+1) - \triangle(n) - \triangle(1)} \cdot 1
$$
$$
(v_{\triangle(n-1), c}^*,  \Phi^-(v_{\triangle(1), c}, z) v_{\triangle(n), c} )
= z^{\triangle(n-1) - \triangle(n) - \triangle(1)} \cdot 1. 
$$
If $n=0$, only $\Phi^+$ exists. 
Consider the following matrix coefficients of iterates of intertwining operators 
$$
\Phi(z, w)= (v_{\triangle(m), c}^*, \Phi^{\pm} (v_{\triangle(1), c}, z) \Phi^{\pm} (v_{\triangle(1), c}, w) v_{\triangle(n), c})
$$
whenever it is well defined. 

\begin{prop} \label{BPZ}
$\Phi(z, w)$ satisfies the following differential equations: 
\begin{eqnarray}
\partial_z^2 \Phi(z, w)
& = & 
\frac{1}{\varkappa} 
\left( \frac{w}{(z-w)z} \partial_w 
-\frac{1}{z} \partial_z + \frac{\triangle(n)}{z^2} + \frac{\triangle(1)}{(z-w)^2} \right) 
\Phi
\\
\partial_w^2 \Phi(z, w)
& = & 
\frac{1}{\varkappa} 
\left( \frac{z}{(w-z)w} \partial_z 
-\frac{1}{w} \partial_w + \frac{\triangle(n)}{w^2} + \frac{\triangle(1)}{(z-w)^2} \right) 
\Phi
\end{eqnarray}
where the functions on the right-hand side are understood to be power series expansions in the domain 
$|z| > |w| >0$. 
\end{prop}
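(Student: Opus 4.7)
The plan is to deduce (3.1) from the second-order null-state identity of the previous lemma, and to obtain (3.2) by the same argument applied to the second intertwining operator. I would insert the operator identity $\partial_z^2 \Phi^\pm(v_{\triangle(1),c},z) = \varkappa^{-1}{:}L(z)\Phi^\pm(v_{\triangle(1),c},z){:}$ into the matrix coefficient and split the normal-ordered product as $\sum_{k\le -2} L_k z^{-k-2}\Phi^\pm + \sum_{k\ge -1}\Phi^\pm L_k z^{-k-2}$. The ``creation'' half dies against the functional $v^*_{\triangle(m),c}$: the contragredient rule $\langle L_k f, v\rangle = \langle f, L_{-k}v\rangle$ together with the fact that $v^*_{\triangle(m),c}$ pairs nontrivially only with the top graded component of $M(\triangle(m),c)$ forces $L_k v^*_{\triangle(m),c} = 0$ for every $k\ge 1$, and hence $\langle v^*_{\triangle(m),c}, L_k\,\cdot\,\rangle = 0$ for every $k\le -2$.

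In the surviving ``annihilation'' sum I would commute each $L_k$ ($k\ge -1$) across $\Phi^\pm(v_{\triangle(1),c},w)$ via the standard intertwining-operator commutator $[L_k,\Phi^\pm(v_{\triangle(1),c},w)] = w^{k+1}\partial_w\Phi^\pm + (k+1)w^k \triangle(1)\Phi^\pm$ and then let $L_k$ act on $v_{\triangle(n),c}$: terms with $k\ge 1$ vanish, $k=0$ contributes the $\triangle(n)\Phi/z^2$ piece, and the delicate case $k=-1$ produces $\langle v^*_{\triangle(m),c},\Phi^\pm_1\Phi^\pm_2 L_{-1}v_{\triangle(n),c}\rangle$. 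I would eliminate the last term by a global $L_{-1}$-Ward identity: commuting $L_{-1}$ leftward through both intertwining operators via $[L_{-1},\Phi(v,\zeta)] = \partial_\zeta\Phi$ and using $L_1 v^*_{\triangle(m),c} = 0$ (by the same contragredient argument as above) yields $\langle v^*_{\triangle(m),c},\Phi^\pm_1\Phi^\pm_2 L_{-1}v_{\triangle(n),c}\rangle = -(\partial_z+\partial_w)\Phi(z,w)$.

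The proof concludes by summing the two geometric series $\sum_{k\ge -1}w^{k+1}z^{-k-2} = (z-w)^{-1}$ and $\sum_{k\ge -1}(k+1)w^k z^{-k-2} = (z-w)^{-2}$ (both valid in the prescribed region $|z|>|w|>0$), and combining the two $\partial_w$ contributions via the partial fraction $(z-w)^{-1}-z^{-1} = w/[z(z-w)]$, which reproduces the right-hand side of (3.1) exactly. For (3.2) the same procedure applies to the null state at $w$: the annihilation sum now acts directly on $v_{\triangle(n),c}$, while the creation sum must instead be commuted leftward through $\Phi^\pm(v_{\triangle(1),c},z)$ before being annihilated by $v^*_{\triangle(m),c}$, after which the same geometric summation and the same $L_{-1}$ Ward identity close the argument. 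The principal obstacle is purely bookkeeping — tracking which side of which intertwining operator each Virasoro mode sits on, expanding every rational coefficient as a formal Laurent series in the correct region, and ensuring that the $L_{-1}$-Ward-identity contribution is absorbed cleanly into the coefficient of $\partial_w$ (respectively $\partial_z$) in the final rational function.
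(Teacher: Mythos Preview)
Your proposal is correct and follows essentially the same approach as the paper. The paper packages the mode-by-mode commutators into the field notation $L^+(z)=\sum_{n\ge -1}L_nz^{-n-2}$ and uses the pre-summed commutator $[L^+(z),\Phi^\pm(v,w)]=(z-w)^{-1}\partial_w\Phi^\pm+(z-w)^{-2}\triangle(1)\Phi^\pm$, whereas you compute each $[L_k,\Phi^\pm]$ separately and then sum the geometric series; the $L_{-1}$ Ward identity and the handling of the remaining $L_0,L_{-1}$ on $v_{\triangle(n),c}$ are identical in both arguments.
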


\begin{proof}
By properties of intertwining operators,
we have 
$$
[L_{-1}, \Phi^{\pm} (v_{\triangle(1), c}, z) ] 
= \partial_z \Phi^{\pm} (v_{\triangle(1), c}, z), 
$$
$$
[L^+(z), \Phi^{\pm} (v_{\triangle(1), c}, w)] 
= \frac{ 
\partial_w \Phi^{\pm} (v_{\triangle(1), c}, w)  }
{z-w} 
+ \frac{ 
\triangle(1) \Phi^{\pm} (v_{\triangle(1), c}, w) }
{(z-w)^2}, 
$$
where
$$
L^+ (z) = \sum_{n \geq  -1} L_n z^{-n-2}, \qquad
L^- (z)  = \sum_{n <  -1} L_n z^{-n-2}. 
$$ 
Now, 
\begin{eqnarray} 
& &  \partial^2_z \Phi(z, w)
\nonumber \\ 
&=& 
(v_{\triangle(m), c}^*, \partial_z^2 \Phi^{\pm} (v_{\triangle(1), c}, z) \Phi^{\pm} (v_{\triangle(1), c}, w) v_{\triangle(n), c})
\nonumber \\
&=&  
\frac{1}{\varkappa} (v_{\triangle(m), c}^*, :L(z) \Phi^{\pm} (v_{\triangle(1), c}, z): \Phi^{\pm} (v_{\triangle(1), c}, w) v_{\triangle(n), c})
\nonumber \\
&=&  
\frac{1}{\varkappa} (v_{\triangle(m), c}^*, 
[ L^-(z) \Phi^{\pm} (v_{\triangle(1), c}, z) + \Phi^{\pm} (v_{\triangle(1), c}, z) L^+ (z) ]
 \Phi^{\pm} (v_{\triangle(1), c}, w) v_{\triangle(n), c})
\nonumber \\
&=& 
\frac{1}{\varkappa} (v_{\triangle(m), c}^*, 
\Phi^{\pm} (v_{\triangle(1), c}, z) L^+ (z) 
\Phi^{\pm} (v_{\triangle(1), c}, w) v_{\triangle(n), c})
\nonumber \\
& =&
\frac{1}{\varkappa} (v_{\triangle(m), c}^*, 
\Phi^{\pm} (v_{\triangle(1), c}, z) 
[L^+ (z), \Phi^{\pm} (v_{\triangle(1), c}, w)] 
v_{\triangle(n), c})
\nonumber \\ 
& & 
+ \quad \frac{1}{\varkappa} (v_{\triangle(m), c}^*, 
\Phi^{\pm} (v_{\triangle(1), c}, z) \Phi^{\pm} (v_{\triangle(1), c}, w) L^+(z) v_{\triangle(n), c})
\nonumber \\
& = & 
\frac{1}{\varkappa} (v_{\triangle(m), c}^*, 
\Phi^{\pm} (v_{\triangle(1), c}, z) 
\left[ \frac{  \partial_w \Phi^{\pm} (v_{\triangle(1), c}, w)  } {z-w} 
+ \frac{ \triangle(1) \Phi^{\pm} (v_{\triangle(1), c}, w) } {(z-w)^2} \right]
v_{\triangle(n), c})
\nonumber \\
& & 
+ \quad \frac{1}{\varkappa} (v_{\triangle(m), c}^*, 
\Phi^{\pm} (v_{\triangle(1), c}, z) \Phi^{\pm} (v_{\triangle(1), c}, w) 
[L_{-1} z^{-1} + L_0 z^{-2} ] 
v_{\triangle(n), c})
\nonumber \\
& = &
\frac{1}{\varkappa} \left( \frac{\partial_w}{z-w}  + \frac{\triangle(1)} {(z-w)^2} \right) \Phi(z, w)
+ \frac{1}{\varkappa} \left( \frac{ - \partial_z - \partial_w}{z} + \frac{\triangle(n)}{z^2} \right) \Phi(z, w)
\nonumber \\
&=&
\frac{1}{\varkappa} 
\left( \frac{w}{(z-w)z} \partial_w 
-\frac{1}{z} \partial_z + \frac{\triangle(n)}{z^2} + \frac{\triangle(1)}{(z-w)^2} \right) 
\Phi(z, w) 
\nonumber 
\end{eqnarray} 
The second equation is derived analogously. 
\end{proof}

Since 
$$
\triangle(n+1) - \triangle (n) - \triangle(1) = \frac{n}{2 \varkappa}, 
\qquad 
\triangle(n-1) - \triangle (n) - \triangle (1) = 1 - \frac{n+2}{2\varkappa}, 
$$
we look for solutions of the form
\begin{eqnarray}
\Phi (z, w) & = & z^{\alpha} w^{\beta} f(t)
\end{eqnarray}
where 
$f(t) \in \mathbb C[[t]]$ with $t = w/z$, $f(0)=1$, 
and $(\alpha, \beta)$ takes the following values
\begin{eqnarray}
( \frac{n+1}{2 \varkappa}, \frac{n}{2 \varkappa}),  
& &
(1 - \frac{n+3}{2 \varkappa}, \frac{n}{2 \varkappa}), 
\\
( \frac{n-1}{2 \varkappa}, 1 - \frac{n+2}{2 \varkappa}), 
& & 
( 1- \frac{n+1}{2 \varkappa}, 1- \frac{n+2}{2 \varkappa}). 
\end{eqnarray}
Plugging (3.3) into (3.1), 
we obtain 
$$
t f''(t) + \left[ 2 (1-\alpha) - \frac{1}{\varkappa} \left(1+ \frac{1}{1-t} \right) \right] f'(t) 
- \frac{1}{\varkappa} \left [ \frac{\beta}{1-t} + \frac{(2-t) \triangle(1)}{(1-t)^2} \right] f(t) = 0. 
$$
Let 
$$f(t) = (1-t)^{\frac{1}{2 \varkappa}} g(t), 
$$
then $g$ satisfies 
\begin{eqnarray}
 t(1-t) g''(t) + [ 2(1-\alpha - \frac{1}{\varkappa} ) - 2(1 - \alpha) t ] g'(t) 
+  \frac{1}{\varkappa} (\alpha - \beta - \frac{1}{2\varkappa} )  g(t) & = &  0. 
\end{eqnarray}
When $\alpha, \beta$ take the specified values in (3.4-3.5) from left to right, up and down, 
(3.6) is reduced to the following:  
\begin{eqnarray} 
t(1-t) g''(t) + \left[ \left(2 - \frac{n+3}{\varkappa} \right) -  \left( 2 - \frac{n+1}{\varkappa} \right) t \right] g'(t) 
& = & 0, 
\\
%
 t(1-t) g''(t) + \left[ \frac{n+1}{\varkappa} -  \frac{n+3}{\varkappa} t  \right] g'(t) 
+ \frac{1}{\varkappa} \left[ 1 - \frac{n+2}{\varkappa} \right] g(t) 
& = & 0, 
\\
%
t(1-t) g''(t) + \left[ \left( 2- \frac{n+1}{\varkappa} \right) - \left(2 - \frac{n-1}{ \varkappa} \right) t \right]  g'(t) 
+ \frac{1}{\varkappa} \left[ \frac{n}{\varkappa} -1 \right] g(t) 
& = & 0, 
\\
%
t(1-t) g''(t) + \left[ \frac{n-1}{\varkappa} - \frac{n+1}{\varkappa} t \right] g'(t) 
& = & 0, 
\end{eqnarray}
Analogously, 
if we substitute $\Phi(z, w) = z^\alpha w^\beta (1-t)^{\frac{1}{2 \varkappa}} g(t)$ into (3.2), 
we obtain
\begin{eqnarray}
t(1-t) g''(t) + \left[ \frac{n+1}{\varkappa} - \frac{n+3}{\varkappa} t \right] g'(t) 
& = & 0, 
\end{eqnarray}
%
%
%
(3.8), (3.9), and 
\begin{eqnarray}
t(1-t) g''(t) +  \left[ \left( 2- \frac{n+1}{\varkappa} \right) - \left(2 - \frac{n-1}{\varkappa} \right) t \right]  g'(t) 
& = & 0. 
\end{eqnarray}


Recall the hypergeometric series 
$$
{}_2F_1(a, b, c; t) = \sum_{n \geq 0} \frac{(a)_n (b)_n}{ n! (c)_n} t^n
$$
where 
$(a)_n = a(a+1) \ldots (a+n-1)$. 
It is well-known that the Gauss hypergeometric equation
$$
t(1-t) F''(t) + [ c- (a+b+1) t ] F'(t) - ab F =0
$$
has two linearly independent solutions around $0$ 
$$
{}_2F_1(a, b, c; t) 
\qquad \text{and} \qquad 
t^{1-c} {}_2F_1(a-c+1, b-c+1, 2-c; t). 
$$
(3.7) - (3.12) are all hypergeometric equations. 
Now,
we can express the matrix coefficients of the iterates of the intertwining operators 
$\Phi^\pm(\cdot, z)$
in terms of the hypergeometric series. 

\begin{prop}\label{matrixcoefficients}
For $n \geq 2$, 
we have
\begin{eqnarray}
(v_{\triangle(n+2), c}^*,  \Phi^+(v_{\triangle(1), c}, z) \Phi^+ (v_{\triangle(1), c}, w) v_{\triangle(n), c})
& = & 
z^{\frac{n+1}{2 \varkappa}} w^{\frac{n}{2 \varkappa}} (1-t)^{\frac{1}{2\varkappa}}
\nonumber \\
(v_{\triangle(n), c}^*,  \Phi^-(v_{\triangle(1), c}, z) \Phi^+ (v_{\triangle(1), c}, w) v_{\triangle(n), c})
& = & 
z^{1 - \frac{n+3}{2 \varkappa}} w^{\frac{n}{2 \varkappa}} (1-t)^{\frac{1}{2\varkappa}}
{}_2F_1(\frac{1}{\varkappa}, \frac{n+2}{\varkappa} -1, \frac{n+1}{\varkappa}; t)
\nonumber \\
(v_{\triangle(n), c}^*,  \Phi^+(v_{\triangle(1), c}, z) \Phi^- (v_{\triangle(1), c}, w) v_{\triangle(n), c})
& = &
z^{\frac{n-1}{2 \varkappa}} w^{1 - \frac{n+2}{2 \varkappa} } (1-t)^{\frac{1}{2\varkappa}}
{}_2F_1(1- \frac{n}{\varkappa}, \frac{1}{\varkappa}, 2- \frac{n+1}{\varkappa}; t)
\nonumber \\
(v_{\triangle(n-2), c}^*,  \Phi^-(v_{\triangle(1), c}, z) \Phi^- (v_{\triangle(1), c}, w) v_{\triangle(n), c})
& = &
z^{1- \frac{n+1}{2 \varkappa}} w^{ 1- \frac{n+2}{2 \varkappa}} (1-t)^{\frac{1}{2\varkappa}}
\nonumber 
\end{eqnarray}
where $t = w/z$. 

If $n =1$, 
we only have the first three; 
if $n=0$,
we only have the first two: 
\begin{eqnarray}
(v_{\triangle(2), c}^*,  \Phi^+(v_{\triangle(1), c}, z) \Phi^+ (v_{\triangle(1), c}, w) v_{0, c})
& = &
(z-w)^{\frac{1}{2\varkappa}} 
\nonumber \\
(v_{0, c}^*,  \Phi^-(v_{\triangle(1), c}, z) \Phi^+ (v_{\triangle(1), c}, w) v_{0, c})
& =& 
(z-w)^{1 - \frac{3}{2 \varkappa}}. 
\nonumber
\end{eqnarray}
\end{prop}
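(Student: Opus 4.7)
The plan is to solve the BPZ equation of Proposition \ref{BPZ} for each of the four matrix coefficients with the asymptotics dictated by the normalizations of $\Phi^{\pm}$. Each $\Phi(z,w)$ in the statement satisfies (3.1); near $w = 0$ the solution space of (3.1) is two-dimensional, and the correct solution is singled out by matching the leading $w$-behaviour of the intermediate state $\Phi^{\pm}(v_{\triangle(1),c},w) v_{\triangle(n),c}$.

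First I would identify the prefactor $z^{\alpha} w^{\beta}$ in the ansatz (3.3). Expanding $\Phi^{\pm}(v_{\triangle(1),c}, w) v_{\triangle(n),c}$ to leading order as $w \to 0$ produces $w^{\triangle(n\pm 1) - \triangle(n) - \triangle(1)}\, v_{\triangle(n \pm 1),c} + \cdots$, and pairing the subsequent $\Phi^{\pm}(v_{\triangle(1),c}, z)$ with $v^*_{\triangle(m),c}$ contributes the $z$-exponent $\triangle(m) - \triangle(n \pm 1) - \triangle(1)$. Using $\triangle(k+1) - \triangle(k) - \triangle(1) = k/(2\varkappa)$ and $\triangle(k-1) - \triangle(k) - \triangle(1) = 1 - (k+2)/(2\varkappa)$, each of the four sign patterns produces exactly one of the pairs $(\alpha,\beta)$ in (3.4)--(3.5), and the normalizations $(v^*_{\triangle(m),c}, v_{\triangle(m),c}) = 1$ enforce $f(0) = 1$.

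Next, with the substitution $f(t) = (1-t)^{1/(2\varkappa)} g(t)$, the BPZ equation (3.1) reduces to one of the ODEs (3.7)--(3.10). Equations (3.7) and (3.10) have vanishing constant term, so $g \equiv 1$ is the unique solution with $g(0) = 1$, giving the first and fourth formulas. Equations (3.8) and (3.9) are genuine Gauss hypergeometric equations; matching against the standard form $t(1-t) g'' + [c - (a+b+1)t] g' - ab\, g = 0$ gives, for (3.8), $c = (n+1)/\varkappa$, $a+b = (n+3)/\varkappa - 1$, $ab = (n+2)/\varkappa^2 - 1/\varkappa$, whence $\{a,b\} = \{1/\varkappa,\, (n+2)/\varkappa - 1\}$; the analogous computation for (3.9) gives $(a,b,c) = (1 - n/\varkappa,\, 1/\varkappa,\, 2 - (n+1)/\varkappa)$. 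Since $\varkappa \notin \mathbb{Q}$ the second Frobenius solution $t^{1-c}\, {}_2F_1(\cdots)$ has a non-integral leading exponent at $t = 0$, so $g(0) = 1$ forces $g(t) = {}_2F_1(a,b,c;t)$.

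The degenerate cases $n = 0, 1$ follow by direct specialization: for $n = 0$ the first formula becomes $z^{1/(2\varkappa)}(1-t)^{1/(2\varkappa)} = (z-w)^{1/(2\varkappa)}$, and Euler's identity ${}_2F_1(a,b,a;t) = (1-t)^{-b}$ collapses ${}_2F_1(1/\varkappa, 2/\varkappa - 1, 1/\varkappa; t) = (1-t)^{1 - 2/\varkappa}$, which combined with the $(1-t)^{1/(2\varkappa)}$ prefactor yields $(z-w)^{1 - 3/(2\varkappa)}$; the $n = 1$ case is straightforward specialization of the first three formulas. The main obstacle is purely bookkeeping: correctly matching each sign pattern $\Phi^{\pm}\Phi^{\pm}$ to its exponent pair, to the corresponding hypergeometric ODE, and to the specific ${}_2F_1$ branch selected by the normalization, while verifying that the reduction of the BPZ equation under the ansatz $z^{\alpha} w^{\beta}(1-t)^{1/(2\varkappa)} g(t)$ indeed produces exactly the four listed ODEs.
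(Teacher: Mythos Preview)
Your proposal is correct and follows exactly the route the paper intends: the paper's own proof is the single sentence ``This is clear from the above discussion,'' and that discussion is precisely the derivation of the BPZ equation (3.1), the ansatz $\Phi=z^{\alpha}w^{\beta}(1-t)^{1/(2\varkappa)}g(t)$, and the reduction to the hypergeometric ODEs (3.7)--(3.10), which you have reproduced and solved with the correct normalization $g(0)=1$. Your treatment of the $n=0$ collapse via ${}_2F_1(a,b,a;t)=(1-t)^{-b}$ is also right and makes explicit a step the paper leaves to the reader.
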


\begin{proof}
This is clear from the above discussion. 
\end{proof}


Recall that we have set
$$
c = 13 - 6 \varkappa - 6\varkappa^{-1}, \qquad \triangle(n) = \frac{n(n+2)}{4\varkappa} - \frac{n}{2}, 
$$ 
for some $\varkappa \notin \mathbb Q$, $n \geq 0$. 
Now, 
define 
$$
\bar c = 13 + 6 \varkappa + 6 \varkappa^{-1}, \qquad \bar \triangle(n) = - \frac{n(n+2)}{4\varkappa} - \frac{n}{2}, 
$$
and consider the following $\text{Vir}_c \oplus \text{Vir}_{\bar c}$-module:
$$
W = \bigoplus_{n \geq 0} L(\triangle(n), c) \otimes L(\bar \triangle(n), \bar c). 
$$
Our goal is to construct a vertex operator algebra structure on $W$ with rank $26$. 
The main idea of the construction is to define the vertex operators by pairing the intertwining operators from the two copies of the Virasoro actions together 
(see [Z] for the same idea applied to affine Lie algebras). 
We want the $0$-component 
$W(0) = L(0, c) \otimes L(0, \bar c)$ 
be a vertex subalgebra of $W$ 
so that the Virasoro module structure of $W$ agrees with the $W(0)$-module structure of $W$.  
Let us make a simplification of notations:
$$
{\bf v}_n = v_{\triangle(n), c} \otimes v_{\bar \triangle(n), \bar c} \in W(n) = L(\triangle(n), c) \otimes L(\bar \triangle(n), \bar c). 
$$
${\bf v}_0$ will be the vacuum vector of $W$. 
The vertex operators of elements of $W(0)$ are defined in the usual way using 
normal ordering of 
$L(z) = \sum_n L_n z^{-n-2}$, 
$\bar L(z) = \sum_n \bar L_n z^{-n-2}$, 
and their derivatives. 
We use bar to denote the second copy of the Virasoro action with the central charge $\bar c$. 
The Virasoro element is given by $(L_{-2} + \bar L_{-2}) {\bf v}_0$. 

We define the vertex operators for elements of $W(1)$. 
For any $v \otimes v' \in W(1)$, $u \otimes u' \in W(n)$, 
we set 
\begin{eqnarray}
\hspace{.6 in} Y(v \otimes v', z) (u \otimes u') 
& =  &
\Phi^+(v, z) u \otimes \Psi^+(v', z) u' + X_{1, n}^{n-1} 
\Phi^-(v, z) u \otimes \Psi^-(v', z) u'
\end{eqnarray}
for some constant $X_{1, n}^{n-1}$ to be determined later 
if $n \geq 1$, 
and 
\begin{eqnarray}
Y(v \otimes v', z) (u \otimes u') 
& = & 
\Phi^+(v, z) u \otimes \Psi^+(v', z) u'
\end{eqnarray}
if $n = 0$. 
(3.13) and (3.14) can certainly be unified by choosing $X_{1, 0}^{-1} =0$. 
Here, 
$\Phi^+(\cdot, z)$, $\Phi^-(\cdot, z)$ are intertwining operators of type 
$\left( \begin{array}{c} L(\triangle(n+1), c) \\ L(\triangle(1), c) \,\, L(\triangle(n), c) \end{array}  \right)$
and 
$\left( \begin{array}{c} L(\triangle(n-1), c) \\ L(\triangle(1), c) \,\, L(\triangle(n), c) \end{array}  \right)$
such that 
$$
v_{\triangle(1), c}^+(-1) v_{\triangle(n), c} = v_{\triangle(n+1), c}, \qquad
v_{\triangle(1), c}^-(-1) v_{\triangle(n), c} = v_{\triangle(n-1), c}. 
$$
Analogously, 
$\Psi^+(\cdot, z)$, $\Psi^-(\cdot, z)$ are intertwining operators of type 
$\left( \begin{array}{c} L(\bar \triangle(n+1), \bar c) \\ L(\bar \triangle(1), \bar c) \,\, L(\bar \triangle(n), \bar c) \end{array}  \right)$
and \\
$\left( \begin{array}{c} L(\bar \triangle(n-1), \bar c) \\ L(\bar \triangle(1), c) \,\, L(\bar \triangle(n), \bar c) \end{array}  \right)$
such that 
$$
v_{\bar \triangle(1), \bar c}^+(-1) v_{\bar \triangle(n), \bar c} = v_{\bar \triangle(n+1), \bar c}, \qquad
v_{\bar \triangle(1), \bar c}^-(-1) v_{\bar \triangle(n), \bar c} = v_{\bar \triangle(n-1), \bar c}. 
$$

In particular, 
we have
$$
Y({\bf v}_1, z) {\bf v}_1
= {\bf v}_2 + \ldots + X_{1, 1}^0 z^2 {\bf v}_0 + \ldots 
$$
Since no negative powers of $z$ appear in $Y({\bf v}_1, z) {\bf v}_1$, 
we expect the vertex operator $Y({\bf v}_1, z)$ to commute with itself, 
i.e. 
\begin{eqnarray}
[Y({\bf v}_1, z), Y({\bf v}_1, w)] & = & 0. 
\end{eqnarray}
We compute the following matrix coefficients using Proposition \ref{matrixcoefficients}:
\begin{eqnarray}
& & ({\bf v}_{n+2}^*, Y({\bf v}_1, z) Y({\bf v}_1, w) {\bf v}_n)
\\
& = & 
(v_{\triangle(n+2), c}^*,  \Phi^+(v_{\triangle(1), c}, z) \Phi^+ (v_{\triangle(1), c}, w) v_{\triangle(n), c})
\cdot (v_{\bar \triangle(n+2), \bar c}^*,  \Psi^+(v_{\bar \triangle(1), \bar c}, z) \Psi^+ (v_{\bar \triangle(1), \bar c}, w) v_{\bar \triangle(n), \bar c})
\nonumber \\
& = & 1. 
\nonumber
\end{eqnarray}

\begin{eqnarray}
& & ({\bf v}_{n}^*, Y({\bf v}_1, z) Y({\bf v}_1, w) {\bf v}_n)
\\
& = & 
X_{1, n+1}^n 
(v_{\triangle(n), c}^*,  \Phi^-(v_{\triangle(1), c}, z) \Phi^+ (v_{\triangle(1), c}, w) v_{\triangle(n), c})
\cdot (v_{\bar \triangle(n), \bar c}^*,  \Psi^-(v_{\bar \triangle(1), \bar c}, z) \Psi^+ (v_{\bar \triangle(1), \bar c}, w) v_{\bar \triangle(n), \bar c})
\nonumber \\
&  + &   
X_{1, n}^{n-1} 
(v_{\triangle(n), c}^*,  \Phi^+ (v_{\triangle(1), c}, z) \Phi^- (v_{\triangle(1), c}, w) v_{\triangle(n), c})
\cdot (v_{\bar \triangle(n), \bar c}^*,  \Psi^+ (v_{\bar \triangle(1), \bar c}, z) \Psi^- (v_{\bar \triangle(1), \bar c}, w) v_{\bar \triangle(n), \bar c})
\nonumber \\
& = &
X_{1, n+1}^n  z^2
{}_2F_1(\frac{1}{\varkappa}, \frac{n+2}{\varkappa} -1, \frac{n+1}{\varkappa}; \frac{w}{z})
{}_2F_1(-\frac{1}{\varkappa}, -\frac{n+2}{\varkappa} -1,  -\frac{n+1}{\varkappa}; \frac{w}{z})
\nonumber \\
& +  &
X_{1, n}^{n-1} w^2
{}_2F_1(1- \frac{n}{\varkappa}, \frac{1}{\varkappa},  2- \frac{n+1}{\varkappa}; \frac{w}{z})
{}_2F_1(1+ \frac{n}{\varkappa}, - \frac{1}{\varkappa},  2+ \frac{n+1}{\varkappa}; \frac{w}{z})
\nonumber 
\end{eqnarray}

\begin{eqnarray}
& & ({\bf v}_{n-2}^*, Y({\bf v}_1, z) Y({\bf v}_1, w) {\bf v}_n)
\\
& = & 
X_{1, n}^{n-1} X_{1, n-1}^{n-2} 
(v_{\triangle(n-2), c}^*,  \Phi^-(v_{\triangle(1), c}, z) \Phi^- (v_{\triangle(1), c}, w) v_{\triangle(n), c})
\nonumber \\
& & 
\cdot (v_{\bar \triangle(n-2), \bar c}^*,  \Psi^-(v_{\bar \triangle(1), \bar c}, z) \Psi^- (v_{\bar \triangle(1), \bar c}, w) v_{\bar \triangle(n), \bar c})
\nonumber \\
& = & 
X_{1, n}^{n-1} X_{1, n-1}^{n-2} z^2 w^2 
\nonumber 
\end{eqnarray}
By (3.15), 
we expect that 
\begin{eqnarray}
({\bf v}_{n}^*, Y({\bf v}_1, z) Y({\bf v}_1, w) {\bf v}_n) 
& = & 
({\bf v}_{n}^*, Y({\bf v}_1, w) Y({\bf v}_1, z) {\bf v}_n). 
\end{eqnarray}
Since the left-hand side does not involve negative powers of $w$ 
and the right-hand side does not involve negative powers of $z$, 
both of them must be (homogeneous) polynomials of degree $2$ in $z, w$. 
Therefore, 
the power series of $t = w/z$ in (3.17) must terminate, 
and that implies what the structure constants 
$X_{1, n+1}^n$, $X_{1, n}^{n-1}$ should be, 
or at least what their ratio is. 
We need the following hypergeometric identity: 

\begin{lemma}\label{hypergeom1}
\begin{eqnarray}
{}_2F_1(a, b, c, t) {}_2F_1(-a, -b-2, -c, t)
& & 
\\
+ \frac{a(b)_3(a-c)}{c(c-1)_3(c-b-1)} t^2 
{}_2F_1(a-c+1, b-c+1, 2-c, t) {}_2F_1(-a+c+1, -b+c-1, 2+c, t) 
& & 
\nonumber \\
= 1 -\frac{2a}{c} t + \frac{a(a-b-1)}{c(c-b-1)} t^2 
& &
\nonumber 
\end{eqnarray}
\end{lemma}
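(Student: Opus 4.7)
The plan is to establish Lemma \ref{hypergeom1} as an identity of formal power series in $t$. Since the right-hand side is a polynomial of degree $2$, it suffices to (i) match the Taylor coefficients of the left-hand side at $t=0$ up to order $t^{2}$, and (ii) show that every Taylor coefficient of the left-hand side of order $\geq 3$ vanishes.

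For (i), I would expand each product directly. Write $P(t) = {}_2F_1(a,b,c;t)\,{}_2F_1(-a,-b-2,-c;t)$ and $Q(t) = {}_2F_1(a-c+1,b-c+1,2-c;t)\,{}_2F_1(-a+c+1,-b+c-1,2+c;t)$, and set $K = \frac{a(b)_{3}(a-c)}{c(c-1)_{3}(c-b-1)}$. Through order $t^{2}$ only $P$ and the constant term of $K\,t^{2}Q$ contribute. A direct computation gives $[t^{0}]P = 1$ and $[t^{1}]P = \frac{ab}{c}+\frac{(-a)(-b-2)}{-c} = -\frac{2a}{c}$, matching the right-hand side. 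The $t^{2}$-coefficient reduces to an elementary rational identity in $a,b,c$ that both verifies the prescribed value of $K$ and recovers the coefficient $\frac{a(a-b-1)}{c(c-b-1)}$.

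The essential content is (ii). My approach is to exploit the Pfaff/Kummer basis: $F(t) := {}_2F_1(a,b,c;t)$ and $t^{1-c}G(t) := t^{1-c}\,{}_2F_1(a-c+1,b-c+1,2-c;t)$ are the two Frobenius solutions at $t=0$ of the Gauss hypergeometric equation with parameters $(a,b,c)$, and likewise $\tilde F(t) := {}_2F_1(-a,-b-2,-c;t)$ and $t^{1+c}\tilde G(t) := t^{1+c}\,{}_2F_1(-a+c+1,-b+c-1,2+c;t)$ are the two Frobenius solutions for parameters $(-a,-b-2,-c)$. Consequently, both $P = F\tilde F$ and $t^{2}Q = (t^{1-c}G)(t^{1+c}\tilde G)$ are analytic-at-$0$ solutions of the common fourth-order linear ODE annihilating all products $y_{1}(t)y_{2}(t)$ of a solution $y_{1}$ of the first hypergeometric equation with a solution $y_{2}$ of the second. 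For generic $c$, the subspace of this four-dimensional solution space consisting of solutions analytic at $t=0$ is two-dimensional and spanned precisely by $P$ and $t^{2}Q$. I would then examine the indicial equation of this ODE at $t = \infty$ to exhibit a unique (up to scalar) polynomial solution of degree $\leq 2$; normalizing by its value and first derivative at $t=0$ forces it to coincide with $P + K\,t^{2}Q$, and the value of $K$ is determined uniquely.

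The main obstacle is the ODE analysis in step (ii): explicitly constructing the fourth-order equation and verifying both that its analytic-at-$0$ subspace is two-dimensional and that it admits a one-dimensional space of polynomial solutions of degree $\leq 2$ (in particular, that no solution of higher polynomial degree appears). A more computational alternative would be to invoke a Clausen-type product formula to rewrite each of $P$ and $Q$ as a single ${}_{4}F_{3}$ series; the lemma would then reduce to a terminating hypergeometric identity of Saalsch\"utz or Dixon type, provable by direct summation or by the Wilf--Zeilberger method. I would attempt both routes and present the shorter one.
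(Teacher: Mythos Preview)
Your approach via the fourth-order product ODE is natural and close in spirit to the paper's, but step (ii) has a genuine gap: you never address the singularity at $t=1$. The functions $P$ and $t^{2}Q$ are analytic only on $|t|<1$; each individually carries a branch singularity at $t=1$, coming from the local exponents $0,\,c-a-b$ (for parameters $(a,b,c)$) and $0,\,a+b-c+2$ (for parameters $(-a,-b-2,-c)$). The indicial equation at $\infty$ governs only local behavior there and cannot by itself exhibit a global polynomial solution. What you actually need is that a specific combination $P + K\,t^{2}Q$ is single-valued and holomorphic at $t=1$; this is the heart of the identity and is precisely where the value of $K$ is forced. In your ODE language, the analytic-at-$0$ subspace and the analytic-at-$1$ subspace are each two-dimensional inside the four-dimensional solution space, and you must show their intersection is nontrivial. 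Two generic $2$-planes in a $4$-space meet only at the origin, so this step requires a real computation, not just indicial data.

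The paper's proof supplies exactly this missing ingredient, but through connection matrices rather than the product ODE. One writes the left-hand side as $f(t)=\Phi_{0}^{t}\,Q\,\Psi_{0}$, where $\Phi_{0},\Psi_{0}$ are the column vectors of Frobenius solutions at $0$ for the two hypergeometric equations and $Q=\mathrm{diag}(1,K)$. Under analytic continuation $\Phi_{0}=M_{1}\Phi_{1}$, $\Psi_{0}=N_{1}\Psi_{1}$, and an explicit Gamma-function calculation (using $\Gamma(z)\Gamma(1-z)=\pi/\sin\pi z$ and a sine addition identity) shows that $Q_{1}=M_{1}^{t}QN_{1}$ is again diagonal. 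Since the components of $\Phi_{1}$ and $\Psi_{1}$ have complementary exponents at $1$, diagonality of $Q_{1}$ makes $f$ a sum of two terms each holomorphic at $1$. The same device at $\infty$ gives $Q_{\infty}$ diagonal, whence $f$ has at worst a pole of order $2$ there; Liouville then yields that $f$ is a polynomial of degree $\le 2$, and the three coefficients are read off directly. Your framework could be completed, but you would end up computing essentially the same connection coefficients to control $t=1$.

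On your alternative route: Clausen's formula concerns the square of a single ${}_{2}F_{1}$ with a constrained parameter set and does not apply to the product here. A WZ certificate would of course verify the identity termwise, but that is a mechanical check and does not illuminate why $K$ takes its particular value.
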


\begin{proof}
This is proved in Section 5. 
\end{proof}

Comparing (3.17) to (3.20), 
we choose 
\begin{eqnarray}
& & a = \frac{1}{\varkappa}, \quad 
b= \frac{n+2}{\varkappa} -1, \quad
c = \frac{n+1}{\varkappa}. 
\end{eqnarray}
Then $a + c = b + 1$, 
and
\begin{eqnarray}
& &
1 -\frac{2a}{c} t + \frac{a(a-b-1)}{c(c-b-1)} t^2 
= 1 - \frac{2}{n+1} t + t^2. 
\end{eqnarray}
(3.20) motivates us to require
\begin{eqnarray}
\frac{X_{1, n}^{n-1}}{X_{1, n+1}^n}
& = & 
\frac{\frac{1}{\varkappa} (\frac{n+2}{\varkappa} -1) \frac{n+2}{\varkappa} (\frac{n+2}{\varkappa} +1) (-\frac{n}{\varkappa})}
{\left( \frac{n+1}{\varkappa} \right)^2(\frac{n+1}{\varkappa} +1)(\frac{n+1}{\varkappa}-1)(-\frac{1}{\varkappa} )} 
\nonumber \\
& = & 
\frac{n}{(n+1) [(n+1)^2 -  \varkappa^2 ] } / 
\frac{n+1}{(n+2) [  (n+2)^2 -  \varkappa^2 ] }.  \nonumber 
\end{eqnarray}
We'll choose
\begin{eqnarray}
X_{1, n}^{n-1} = \frac{n}{n+1} \frac{1}{ (n+1)^2 - \varkappa^2 } 
\end{eqnarray}
for $n \geq 0$ 
in the definition of vertex operators (3.13) for elements of $W(1)$. 
The above calculations also showed

\begin{prop}
For $n \geq 2$, 
we have
\begin{eqnarray}
({\bf v}_{n+2}^*, Y({\bf v}_1, z) Y({\bf v}_1, w) {\bf v}_n)
& = & ({\bf v}_{n+2}^*, Y({\bf v}_1, w) Y({\bf v}_1, z) {\bf v}_n)
\quad = \quad 1, 
\\
({\bf v}_{n}^*, Y({\bf v}_1, z) Y({\bf v}_1, w) {\bf v}_n)
& = & 
({\bf v}_{n}^*, Y({\bf v}_1, w) Y({\bf v}_1, z) {\bf v}_n)
\quad = \quad 
X_{1, n+1}^n (z^2 - \frac{2}{n+1} z w + w^2 ), 
\\
({\bf v}_{n-2}^*, Y({\bf v}_1, z) Y({\bf v}_1, w) {\bf v}_n)
& = & ({\bf v}_{n-2}^*, Y({\bf v}_1, w) Y({\bf v}_1, z) {\bf v}_n)
\quad = \quad 
X_{1, n}^{n-1} X_{1, n-1}^{n-2} z^2 w^2. 
\end{eqnarray}
If $n=1$ or $0$, we only have the first two. 
\end{prop}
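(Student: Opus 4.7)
The plan is to verify each of the three matrix coefficients in turn by invoking Proposition \ref{matrixcoefficients} for the $\Phi^\pm$ pieces and its $\varkappa\mapsto -\varkappa$ counterpart for the $\Psi^\pm$ pieces, then to observe that each resulting expression is manifestly symmetric in $z$ and $w$. Note that because $\bar\triangle(n)=\triangle(n)|_{\varkappa\to-\varkappa}$, the $\Psi^\pm$ matrix coefficients are obtained from those in Proposition \ref{matrixcoefficients} by the substitution $\varkappa\mapsto -\varkappa$; in particular all $(1-t)^{1/(2\varkappa)}$ factors are cancelled by the corresponding $\bar{}$-factors $(1-t)^{-1/(2\varkappa)}$.

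For the top coefficient (3.24), the $\Phi^+\Phi^+$ contribution to (3.13) is the only one that can produce ${\bf v}_{n+2}^*$, and computation (3.16) already shows this product equals $1$. For the bottom coefficient (3.26), only the $\Phi^-\Phi^-$ summand reaches ${\bf v}_{n-2}^*$, and (3.18) shows the product is $X_{1,n}^{n-1}X_{1,n-1}^{n-2}z^2w^2$. Both $1$ and $z^2w^2$ are symmetric in $z,w$, so in each case the iterates $Y({\bf v}_1,z)Y({\bf v}_1,w)$ and $Y({\bf v}_1,w)Y({\bf v}_1,z)$ agree on the stated matrix coefficient.

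The substantive step is the middle coefficient (3.25). From (3.17) this matrix coefficient equals
\[
X_{1,n+1}^n\,z^2\,{}_2F_1(a,b,c;t)\,{}_2F_1(-a,-b-2,-c;t)
+X_{1,n}^{n-1}\,w^2\,{}_2F_1(a{-}c{+}1,b{-}c{+}1,2{-}c;t)\,{}_2F_1(-a{+}c{+}1,-b{+}c{-}1,2{+}c;t)
\]
with the specialization $a=1/\varkappa$, $b=(n{+}2)/\varkappa-1$, $c=(n{+}1)/\varkappa$ from (3.21). I would first check that the hypergeometric parameters and the prefactor $w^2=z^2t^2$ in the second summand match the shape of Lemma \ref{hypergeom1} exactly, and then verify that the ratio $X_{1,n}^{n-1}/X_{1,n+1}^n$ determined by (3.23) equals the ratio $\tfrac{a(b)_3(a-c)}{c(c-1)_3(c-b-1)}$ required by Lemma \ref{hypergeom1}. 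Granting these, the identity of Lemma \ref{hypergeom1} collapses the full expression to
\[
X_{1,n+1}^n z^2\Bigl(1-\tfrac{2a}{c}t+\tfrac{a(a-b-1)}{c(c-b-1)}t^2\Bigr).
\]
A direct substitution of (3.21) into the two rational prefactors gives $2a/c=2/(n{+}1)$ and $a(a{-}b{-}1)/(c(c{-}b{-}1))=1$, yielding $X_{1,n+1}^n(z^2-\tfrac{2}{n+1}zw+w^2)$. This polynomial is symmetric in $z,w$, which establishes both the value and the locality statement in (3.25).

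The only genuine obstacle is the matching of the prefactor ratio with (3.23); everything else is a routine substitution once Lemma \ref{hypergeom1} is in hand. Concretely, one needs to verify
\[
\frac{a(b)_3(a-c)}{c(c-1)_3(c-b-1)}
=\frac{\tfrac{1}{\varkappa}\bigl(\tfrac{n+2}{\varkappa}-1\bigr)\tfrac{n+2}{\varkappa}\bigl(\tfrac{n+2}{\varkappa}+1\bigr)\bigl(-\tfrac{n}{\varkappa}\bigr)}{\bigl(\tfrac{n+1}{\varkappa}-1\bigr)\tfrac{n+1}{\varkappa}\bigl(\tfrac{n+1}{\varkappa}+1\bigr)\tfrac{n+1}{\varkappa}\bigl(-\tfrac{1}{\varkappa}\bigr)}
\]
simplifies to the ratio $\bigl[n/((n{+}1)((n{+}1)^2-\varkappa^2))\bigr]\big/\bigl[(n{+}1)/((n{+}2)((n{+}2)^2-\varkappa^2))\bigr]$ dictated by (3.23), a direct algebraic check. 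Once this is confirmed, the three displayed equalities follow and the proposition is proved; the $n=0,1$ boundary cases are handled by the same argument with the missing summands absorbed into the convention $X_{1,0}^{-1}=0$.
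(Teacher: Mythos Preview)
Your proposal is correct and follows exactly the route the paper takes: the proposition is stated immediately after the computations (3.16)--(3.18), the specialization (3.21)--(3.22), and the choice (3.23), and the paper's entire proof is the sentence ``The above calculations also showed''---you have simply made those calculations explicit. The key input is Lemma~\ref{hypergeom1} together with the verification that the ratio $X_{1,n}^{n-1}/X_{1,n+1}^n$ from (3.23) matches the coefficient $\tfrac{a(b)_3(a-c)}{c(c-1)_3(c-b-1)}$, which is precisely the check the paper performs between (3.22) and (3.23).
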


However, 
(3.24)-(3.26) are not sufficient to imply the locality of $Y({\bf v}_1, z)$ with itself. 
We need to consider more general matrix coefficients
\begin{eqnarray}
(L_{-j_1} \ldots \bar L_{-p_1} \ldots {\bf v}_m^*, Y({\bf v}_1, z) Y({\bf v}_1, w) L_{-i_1}\ldots \bar L_{-q_1} \ldots {\bf v}_n). 
\nonumber
\end{eqnarray}
Using the commutator relation
between $L_m, \bar L_m$ and $Y(L_{-1}^k \bar L_{-1}^{k'} {\bf v}_1, z)$, 
(3.27) can be expressed as linear combinations of matrix coefficients of the following type
\begin{eqnarray}
({\bf v}_m^*, Y(L_{-1}^k \bar L_{-1}^{k'} {\bf v}_1, z) Y(L_{-1}^l \bar L_{-1}^{l'} {\bf v}_1, w) {\bf v}_n). 
\end{eqnarray}
It turns out to be convenient to prove the locality of various fields of type 
$Y(L_{-1}^k \bar L_{-1}^{k'} {\bf v}_1, z)$ simultaneously. 

Let 
\begin{eqnarray}
A_n & = & 
\mathbb C[z^{\pm 1}, w^{\pm 1}](z-w)^{-n}, \qquad n \geq 1; 
\nonumber \\
A & = & A_0 \quad = \quad \mathbb C[z^{\pm 1}, w^{\pm 1}]. 
\nonumber 
\end{eqnarray}
For $v_m^* \in W(m)^*$, 
$v_n \in W(n)$, 
denote 
\begin{eqnarray}
\phi(v_m^*, v_n, k, k', l, l'; z, w) 
& = &
(v_m^*, Y(L_{-1}^{k} \bar L_{-1}^{k'} {\bf v}_1, z) 
Y(L_{-1}^{l} \bar L_{-1}^{l'} {\bf v}_1, w) v_n ). 
\end{eqnarray}
Our goal is to prove the following

\begin{prop} \label{extra}
For any $m, n, k, k', l, l' \geq 0$, 
$v_m^* \in W(m)^*$, 
and 
$v_n \in W(n)$, 
the matrix coefficients 
$$
\phi(v_m^*, v_n, k, k', l, l'; z, w) \qquad \text{and} \qquad 
\phi(v_m^*, v_n, l, l', k, k'; w, z)
$$
converge to the same rational function in $A_{k+k'+l+l'}$
in respective domains $|z| > |w|>0$ and $|w|>|z|>0$. 
Furthermore, 
for any $N \geq 0$, 
we have 
\begin{eqnarray}
& & 
\sum_{i=0}^N 
{N \choose i}
\phi(v_m^*, v_n, k+i, k', l+N-i, l'; z, w)
\in  A_{k+k'+l+l'}
\\
& & 
\sum_{i=0}^N 
{N \choose i}
\phi(v_m^*, v_n, k, k'+i, l, l'+N-i; z, w)
\in  A_{k+k'+l+l'}. 
\end{eqnarray}
\end{prop}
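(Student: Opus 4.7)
The plan is to reduce to highest-weight matrix coefficients, exploit the explicit formulas of Proposition \ref{matrixcoefficients} together with the polynomial identities (3.24)--(3.26), and finally apply the operator identity
$$
\sum_{i=0}^N \binom{N}{i} \partial_z^{k+i} \partial_w^{l+N-i} = \partial_z^k \partial_w^l (\partial_z + \partial_w)^N.
$$
First I would use the intertwining commutator $[L_m, \Phi^{\pm}(v, z)] = \sum_{j \geq 0} \binom{m+1}{j} z^{m+1-j} \Phi^{\pm}(L_{j-1} v, z)$ and its $\bar{L}_m, \Psi^{\pm}$ analogue to commute every Virasoro generator hidden inside $v_m^*, v_n$ past the two vertex operators. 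Together with the highest-weight conditions $L_j {\bf v}_n = 0 = \bar{L}_j {\bf v}_n$ for $j > 0$, this rewrites $\phi(v_m^*, v_n, k, k', l, l'; z, w)$ as a finite $\mathbb{C}[z^{\pm 1}, w^{\pm 1}]$-linear combination of the highest-weight coefficients $\phi({\bf v}_m^*, {\bf v}_n, k_1, k_1', l_1, l_1'; z, w)$ with shifted indices, and likewise for the opposite ordering $\phi(v_m^*, v_n, l, l', k, k'; w, z)$. Since all assertions of the proposition are linear in $\phi$, it suffices to treat the highest-weight case.

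For ${\bf v}_m^*, {\bf v}_n$, the derivative property $\Phi^{\pm}(L_{-1} v, z) = \partial_z \Phi^{\pm}(v, z)$ together with the definition (3.13) gives
$$
\phi({\bf v}_m^*, {\bf v}_n, k, k', l, l'; z, w) = \sum_{(\epsilon, \epsilon')} C_{\epsilon, \epsilon'}\, \partial_z^k \partial_w^l F^{\epsilon \epsilon'}(z, w) \cdot \partial_z^{k'} \partial_w^{l'} G^{\epsilon \epsilon'}(z, w),
$$
with $F^{\epsilon \epsilon'}, G^{\epsilon \epsilon'}$ the iterate matrix coefficients from Proposition \ref{matrixcoefficients} and $C_{\epsilon, \epsilon'}$ products of the appropriate $X$-constants; only $m = n$ and $m = n \pm 2$ give nonzero contributions. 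For $m = n \pm 2$, the products $F^{\pm\pm} G^{\pm\pm}$ are the Laurent monomials $1$ and $z^2 w^2$, so Leibniz expansion expresses each $\partial_z^k \partial_w^l F \cdot \partial_z^{k'} \partial_w^{l'} G$ as $F G$ times a polynomial in $\partial_z \log F, \partial_w \log F, \partial_z \log G, \partial_w \log G$, each lying in $\mathbb{C}[z^{\pm 1}, w^{\pm 1}, (z-w)^{-1}]$; this gives rationality with $(z-w)$-pole order at most $k + k' + l + l'$. Locality in both orderings follows because the log-derivatives are single-valued rational expressions independent of the branch of $(z-w)^{1/2 \varkappa}$, and $F G$ coincides with the corresponding swapped-order product as a rational function.

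For (3.30)--(3.31) the binomial-sum identity above reduces the sum to $\partial_z^k \partial_w^l (\partial_z + \partial_w)^N F^{\epsilon \epsilon'}$ multiplied by $\partial_z^{k'} \partial_w^{l'} G^{\epsilon \epsilon'}$. The crucial observation is that $(\partial_z + \partial_w)(z - w)^a = 0$ for any $a$, so $(\partial_z + \partial_w) \log F^{\epsilon \epsilon'}$ carries no $(z - w)^{-1}$ term (the contributions $\pm 1/(2 \varkappa (z - w))$ in $\partial_z \log F$ and $\partial_w \log F$ cancel exactly). Inductively, $(\partial_z + \partial_w)^N F^{\epsilon \epsilon'} = F^{\epsilon \epsilon'} \cdot R_N(z, w)$ with $R_N$ regular at $z = w$, and the remaining $\partial_z^k \partial_w^l$ introduces at most $k + l$ new $(z - w)^{-1}$ poles; combined with $\partial_z^{k'} \partial_w^{l'} G^{\epsilon \epsilon'}$ contributing at most $k' + l'$ more, this yields the required bound. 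Identity (3.31) is identical with $\bar{L}_{-1}, \Psi^{\pm}$ in place of $L_{-1}, \Phi^{\pm}$.

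The main obstacle is the $m = n$ case: neither $F^{-+} G^{-+}$ nor $F^{+-} G^{+-}$ is rational on its own, and only the specific weighted sum yields the polynomial of (3.25) via the hypergeometric identity of Lemma \ref{hypergeom1}. Extending rationality and the pole bound to sums of products of \emph{separately} differentiated $\Phi$- and $\Psi$-factors requires either additional hypergeometric identities beyond Lemma \ref{hypergeom1} or the use of the second-order BPZ equations of Proposition \ref{BPZ} to reduce every $\partial_z^a \partial_w^b F^{\pm \pm}$ to a $\mathbb{C}[z^{\pm 1}, w^{\pm 1}, (z - w)^{-1}]$-linear combination of the four basic quantities $\{F^{\pm \pm}, \partial_z F^{\pm \pm}, \partial_w F^{\pm \pm}, \partial_z \partial_w F^{\pm \pm}\}$ (similarly for $G$), followed by an inductive verification on $k + k' + l + l'$ that the hypergeometric combination collapses to a rational function with the correct pole bound.
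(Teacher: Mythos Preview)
Your overall architecture matches the paper's: reduce to the highest-weight pairs $({\bf v}_m^*,{\bf v}_n)$, dispose of $m=n\pm 2$ easily because the products $F^{\pm\pm}G^{\pm\pm}$ are already Laurent monomials, and recognize that the $m=n$ case is the crux. You also correctly name the two tools that are needed there --- further hypergeometric identities and the BPZ reduction --- and the paper in fact uses \emph{both}. So there is no wrong turn, but your proposal stops precisely where the real computation begins.

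Concretely, for $m=n$ the paper does not attempt to argue abstractly. It establishes the four base cases $\varphi_{0,0},\varphi_{1,0},\varphi_{0,1},\varphi_{1,1}$ by three separate hypergeometric identities (Lemmas~\ref{hypergeom1}, \ref{hypergeom2}, \ref{hypergeom3}), each proved by the connection-matrix method in Section~5, and then invokes the BPZ equations (3.1)--(3.2) to express every $\varphi_{k,l}$ as a $\mathbb C[z^{\pm1},w^{\pm1},(z-w)^{-1}]$-combination of those four (Corollary~\ref{phikl}). Your last paragraph gestures at this but does not carry it out; without the two extra identities you cannot get off the ground, since $\varphi_{1,0}$ and $\varphi_{1,1}$ are genuinely new combinations of hypergeometric products that Lemma~\ref{hypergeom1} alone does not control.

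There is also a gap in your first paragraph. Reducing a general $(v_m^*,v_n)$ to highest weight is fine for the bare rationality statement, but the binomial-sum assertions (3.29)--(3.30) do not reduce pointwise: commuting an $L_p$ past the two fields sends each $\phi(\cdot,\cdot,k+i,k',l+N-i,l')$ to a sum of terms with indices $(k+i-q+1,\ldots)$ and $(\ldots,l+N-i-q+1)$, and the binomial weights $\binom{N}{i}$ get scrambled. Regrouping these into new binomial sums with the correct pole bound is exactly what the paper's Step~4 does, and it hinges on the combinatorial identity of Lemma~\ref{l2}. Your ``finite $\mathbb C[z^{\pm1},w^{\pm1}]$-linear combination'' claim hides this reshuffling. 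Finally, your use of $(\partial_z+\partial_w)^N$ for (3.29)--(3.30) is the right idea and matches the paper's Lemma~\ref{bino} and Corollary~\ref{blah}, but note it only treats the $F$-factor; you still need the $m=n$ rationality already in hand to conclude.
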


The above is stronger than the locality, 
in fact the locality of fields
$Y(L_{-1}^k \bar L_{-1}^{k'} {\bf v}_1, z)$
follows from the first half of the statement. 
We will prove Proposition \ref{extra} in 4 steps.

Denote 
$$
\Phi_1(z, w) = (v_{\triangle(n), c}^*,  \Phi^-(v_{\triangle(1), c}, z) \Phi^+ (v_{\triangle(1), c}, w) v_{\triangle(n), c})
$$
$$
\Phi_2(z, w) = (v_{\triangle(n), c}^*,  \Phi^+(v_{\triangle(1), c}, z) \Phi^- (v_{\triangle(1), c}, w) v_{\triangle(n), c})
$$
(see Proposition \ref{matrixcoefficients}), 
and the analogues of $\Phi_{1,2}$ in central charge $\bar c$ by $\Psi_{1, 2}$. 
Define
\begin{eqnarray}
\varphi_{k, l}(z, w) 
& = & (\partial_z^k \partial_w^l \Phi_1 ) \Psi_1 
+ \frac{ X_{1, n}^{n-1}}{X_{1, n+1}^n } (\partial_z^k \partial_w^l \Phi_2 ) \Psi_2. 
\end{eqnarray}
Then 
\begin{eqnarray}
({\bf v}_n^*, Y(L_{-1}^k {\bf v}_1, z) Y(L_{-1}^l {\bf v}_1, w) {\bf v}_n)
= X_{1, n+1}^n \varphi_{k, l}(z, w). 
\end{eqnarray}
{\bf Step 1} is to show $\varphi_{k, l}(z, w) = \varphi_{l, k}(w, z) \in A_{k+l}$. 
We already know that 
\begin{eqnarray}
\varphi_{0, 0}(z, w) = z^2 - \frac{2}{n+1} z w + w^2. 
\end{eqnarray}
To compute $\varphi_{1, 0}(z, w)$, 
we need another identity of hypergeometric series: 

\begin{lemma}\label{hypergeom2}
\begin{eqnarray}
 {}_2F_1(a, b, c, t) {}_2F_1(-a, -b-2, -c, t)  & &
\\
+ \frac{ab}{c(c-1)} t {}_2F_1 (a+1, b+1, c+1, t) {}_2F_1 (-a, -b-2, -c, t) & &
\nonumber \\
+ \frac{a(b)_3(a-c)_2 } {(c-2)_3 (c-1)_3}  t^3 
 {}_2F_1(a-c+2, b-c+2, 3-c, t) {}_2F_1(-a+c+1, -b+c-1, 2+c, t) & &
\nonumber \\
 =1 + \frac{a(b-2c+2)}{c(c-1)} t + \frac{a(a-b-1)}{c(c-1)} t^2. & &
\nonumber
\end{eqnarray}
\end{lemma}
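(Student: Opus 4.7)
The plan is to prove Lemma 3.9 by reducing it to Lemma 3.8 using the standard contiguous relations for the Gauss hypergeometric function, with direct coefficient-matching as a fallback.

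Abbreviate $L = {}_2F_1(a,b,c;t)$, $L_\ast = {}_2F_1(-a,-b-2,-c;t)$, $M = {}_2F_1(a-c+1, b-c+1, 2-c;t)$, $M_\ast = {}_2F_1(-a+c+1, -b+c-1, 2+c;t)$. Using the derivative formula $\frac{d}{dt}{}_2F_1(\alpha,\beta,\gamma;t) = \frac{\alpha\beta}{\gamma}{}_2F_1(\alpha+1,\beta+1,\gamma+1;t)$, the middle summand on the left-hand side of (3.9) equals $\frac{t}{c-1}L'L_\ast$, and the $t^3$-summand equals $C \cdot t^3 M' M_\ast$ for an explicit constant $C$. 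In these variables Lemma 3.9 takes the schematic form
\[ LL_\ast + \tfrac{t}{c-1}L'L_\ast + C\, t^3 M' M_\ast \;=\; 1 + \tfrac{a(b-2c+2)}{c(c-1)}\,t + \tfrac{a(a-b-1)}{c(c-1)}\,t^2, \]
which is structurally parallel to Lemma 3.8, namely $LL_\ast + K\, t^2 M M_\ast = R(t)$ for a suitable constant $K$ and degree-two polynomial $R(t)$.

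My first attempt would be to apply the first-order operator $\mathcal{D} := 1 + \frac{t}{c-1}\frac{d}{dt}$ to Lemma 3.8. This produces $LL_\ast + \tfrac{t}{c-1}(L'L_\ast + LL_\ast')$ together with the expected $t^2 MM_\ast$ piece and $t^3 (M'M_\ast + MM_\ast')$ corrections. Matching against (3.9) then reduces to eliminating the unwanted terms $LL_\ast'$ and $MM_\ast'$. For the former, I would exploit the symmetry $(a,b,c) \leftrightarrow (-a,-b-2,-c)$ that swaps $L \leftrightarrow L_\ast$ in Lemma 3.8, giving a twin identity with derivatives on the second factor. For the latter, the hypergeometric ODE satisfied by the product $MM_\ast$ lets one rewrite $MM_\ast'$ modulo $MM_\ast$ with polynomial coefficients in $t$, so $MM_\ast'$ can be absorbed into the existing terms and the polynomial correction on the right.

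If this elimination becomes too intricate, the fallback is pure coefficient comparison. Expanding each $_2F_1$ as a series in $t$ and equating coefficients of $t^n$ on both sides turns the claim, for each $n \ge 3$, into a vanishing statement for a terminating balanced $_3F_2$ sum, which should yield to Saalsch\"utz's theorem; the cases $n = 0, 1, 2$ are checked by direct computation and must reproduce the stated polynomial on the right. The main obstacle in either route will be the Pochhammer-symbol bookkeeping required to get all the constants to agree, and this is where I expect the bulk of the effort to go.
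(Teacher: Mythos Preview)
Your primary strategy has a real gap. Lemma~3.4 (your ``3.8'') is \emph{invariant} under the swap $(a,b,c)\leftrightarrow(-a,-b-2,-c)$: one checks that both the constant $K=\frac{a(b)_3(a-c)}{c(c-1)_3(c-b-1)}$ and the quadratic $1-\tfrac{2a}{c}t+\tfrac{a(a-b-1)}{c(c-b-1)}t^2$ are fixed by it, and $M\leftrightarrow M_\ast$. Hence the ``twin identity with derivatives on the second factor'' is the same identity, and differentiating it still yields only the symmetric combination $L'L_\ast+LL_\ast'$ (and $M'M_\ast+MM_\ast'$). No linear combination of $\mathcal D f$ and its swapped version can isolate $L'L_\ast$ from $LL_\ast'$, so the elimination step does not close. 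The ODE suggestion for $MM_\ast'$ is likewise underspecified: the second-order ODE for each factor lets you rewrite $M''$, not $M'$, in terms of lower-order data, so ``absorbing $MM_\ast'$ into $MM_\ast$'' is not immediate. Your Saalsch\"utz fallback may well succeed, but as written it is a plan rather than a proof, and the coefficient identity for each $n\ge 3$ is a sum of \emph{three} terminating series (one from each product), so it is not a single balanced ${}_3F_2$ on the nose.

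The paper's route is genuinely different and avoids all of this. It does not differentiate the identity~3.4; instead it reuses the monodromy machinery from that proof. Writing $\Phi_0=(L,\,t^{1-c}M)^{t}$, $\Psi_0=(L_\ast,\,t^{1+c}M_\ast)^{t}$ and $Q=\mathrm{diag}(1,K)$, one defines
\[
f_1(t)\;=\;(\partial_t\Phi_0)^{t}\,Q\,\Psi_0,
\]
with the derivative acting only on the first vector. Because the transition matrices $Q_1=M_1^{t}QN_1$ and $Q_\infty=M_\infty^{t}QN_\infty$ were already shown to be diagonal in the proof of Lemma~3.4, $f_1$ is single-valued at $t=1$ and has at most a simple pole at $\infty$; by Liouville it is a linear polynomial, computed to be $\tfrac{ab}{c}\bigl(1-\tfrac{a-b-1}{c-b-1}t\bigr)$. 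A two-line calculation then shows that the left-hand side of the present lemma equals $f+\tfrac{t}{c-1}f_1$, where $f$ is the (already known) polynomial from Lemma~3.4. The point you are missing is that the ``half-derivative'' bilinear $f_1$ is \emph{itself} a polynomial, and this follows from the global (connection-matrix) argument rather than from any local contiguous-relation manipulation.
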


\begin{proof}
This is proved in Section 5. 
\end{proof}

Let $a, b, c$ take the values specified in (3.21) and let 
\begin{eqnarray}
& & (\alpha_1, \beta_1) = ( 1-\frac{n+3}{2\varkappa},  \frac{n}{2\varkappa}), 
\qquad 
(\alpha_2, \beta_2)  = ( \frac{n-1}{2\varkappa}, 1- \frac{n+2}{2\varkappa}), 
\qquad
\gamma = \frac{1}{2 \varkappa}. 
\end{eqnarray}
Using (3.20) and (3.34), 
it is straightforward to check that 
\begin{eqnarray}
\varphi_{1, 0}(z, w) 
& = &
(1-c) z \left[1 + \frac{a(b-2c+2)}{c(c-1)} t + \frac{a(a-b-1)}{c(c-1)} t^2 \right] 
\\
& + &  
z \left( \alpha_2 + \frac{\gamma t}{1-t} \right) \left[ 1 -\frac{2a}{c} t + \frac{a(a-b-1)}{c(c-b-1)} t^2 \right] 
\nonumber 
\end{eqnarray}
and 
\begin{eqnarray}
\varphi_{0, 1}(z, w) 
& = &
\frac{(c-1)z^2}{w} \left[1 + \frac{a(b-2c+2)}{c(c-1)} t + \frac{a(a-b-1)}{c(c-1)} t^2 \right] 
\\
& + &  
\frac{z^2}{w} \left( \beta_2 - \frac{\gamma t}{1-t} \right) \left[ 1 -\frac{2a}{c} t + \frac{a(a-b-1)}{c(c-b-1)} t^2 \right]. 
\nonumber 
\end{eqnarray}
Both $\varphi_{1, 0}(z, w) $ and $\varphi_{0, 1}(z, w)$ are viewed 
as power series expansions in the region $|z| > |w| >0$. 
It is also convenient to regard them as rational functions of $z$ and $w$ 
with poles at $z = 0$, $w =0$, and $z=w$. 

\begin{lemma}\label{phi1001}
We have $\varphi_{1, 0}, \varphi_{0, 1} \in A_1$, 
$\varphi_{1, 0} + \varphi_{0, 1} \in A$, 
and $\varphi_{1, 0}(z, w) = \varphi_{0, 1}(w, z)$. 
\end{lemma}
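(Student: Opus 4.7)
The plan is to verify all three assertions by direct manipulation of the explicit formulas (3.36) and (3.37), which were derived from the hypergeometric identities (3.20), (3.34) specialized to the values $a = 1/\varkappa$, $b = (n+2)/\varkappa - 1$, $c = (n+1)/\varkappa$ of (3.21). Write $P(t) = 1 - \frac{2a}{c} t + \frac{a(a-b-1)}{c(c-b-1)} t^2$ and $Q(t) = 1 + \frac{a(b-2c+2)}{c(c-1)} t + \frac{a(a-b-1)}{c(c-1)} t^2$ for the two bracketed polynomials of (3.36)--(3.37), with $t = w/z$. Under (3.21) one has $a + c = b + 1$, hence $a - b - 1 = -c$ and $c - b - 1 = -a$, so $\frac{a(a-b-1)}{c(c-b-1)} = 1$; in particular $P$ is palindromic, $P(t) = t^2 P(1/t)$. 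This is the workhorse identity of the proof.

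For $\varphi_{1,0}, \varphi_{0,1} \in A_1$, I would observe that the only non-polynomial dependence on $t$ in (3.36)--(3.37) is through $\frac{\gamma t}{1-t} = \frac{\gamma w}{z-w}$, which contributes at worst a simple pole on $z = w$; everything else is polynomial in $t$ of degree at most two, and multiplication by $z$ or $z^2/w$ produces a Laurent polynomial in $z, w$. For $\varphi_{1,0} + \varphi_{0,1} \in A$, I would extract the principal parts at $z = w$: the singular piece of $\varphi_{1,0}$ is $\frac{\gamma z w}{z-w} P(w/z)$ with residue $\gamma w^2 P(1)$, while the singular piece of $\varphi_{0,1}$ is $-\frac{\gamma z^2}{z-w} P(w/z)$ with residue $-\gamma w^2 P(1)$, so the two residues cancel.

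For the symmetry $\varphi_{1,0}(z, w) = \varphi_{0,1}(w, z)$, I would use the palindrome identity $P(z/w) = z^2 P(w/z)/w^2$ to rewrite $\varphi_{0,1}(w, z)$ entirely in terms of $t = w/z$; the difference then collapses to
\begin{equation*}
\varphi_{1,0}(z, w) - \varphi_{0,1}(w, z) \;=\; z\bigl\{(1-c)\bigl[Q(t) + t^2 Q(1/t)\bigr] + (\alpha_2 - \beta_2 - \gamma)\, P(t)\bigr\},
\end{equation*}
and the claim reduces to verifying that this vanishes identically in $t$. Using $a - b - 1 = -c$ and $b - 2c + 2 = a - c + 1$, a short computation gives $Q(t) + t^2 Q(1/t) = \frac{n - \varkappa}{n + 1 - \varkappa}\, P(t)$, while from (3.35) one has $\alpha_2 - \beta_2 - \gamma = (n - \varkappa)/\varkappa$ and $1 - c = -(n + 1 - \varkappa)/\varkappa$, so the two pieces cancel. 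The main obstacle is the identity $Q(t) + t^2 Q(1/t) = \frac{n - \varkappa}{n + 1 - \varkappa}\, P(t)$: it is not transparent from the definitions, but emerges once $a + c = b + 1$ is used to recognize that the constant and $t^2$ coefficients of $Q(t) + t^2 Q(1/t)$ both equal $\frac{c - 1 - a}{c - 1}$ and the linear coefficient equals $\frac{2a(a - c + 1)}{c(c - 1)}$; substituting $a = 1/\varkappa$, $c = (n + 1)/\varkappa$ then produces the common factor $\frac{n - \varkappa}{n + 1 - \varkappa}$ relative to $P(t)$. With this in hand, all three assertions of the lemma follow.
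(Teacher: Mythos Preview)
Your proof is correct and follows exactly the approach the paper indicates: direct calculation from the explicit formulas (3.36)--(3.37) using the specialization (3.21); the paper simply omits the details you have supplied. Your key observation that $a+c=b+1$ forces $P$ to be palindromic, together with the identity $Q(t)+t^2Q(1/t)=\frac{c-1-a}{c-1}P(t)$, is precisely the algebraic content the paper elides.
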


\begin{proof}
This is shown by direct calculation. 
The details are omitted.
\end{proof}



To compute $\varphi_{1, 1}(z, w)$, 
we need the following identity of hypergeometric series: 

\begin{lemma} \label{hypergeom3}
\begin{eqnarray}
{}_2F_1(a, b, c, t) {}_2F_1 (-a, -b-2, -c, t) & &
\\
+\frac{2ab}{c(c-2)} t \, {}_2F_1(a+1, b+1, c+1, t) {}_2F_1(-a, -b-2, -c, t) & &
\nonumber \\
+\frac{(a)_2(b)_2}{(c-2)_4} \, t^2  {}_2F_1(a+2, b+2, c+2, t) {}_2F_1(-a, -b-2, -c, t) & &
\nonumber \\
+\frac{a(b)_3(a-c)_3(c-b-2)}{(c-3)_4(c-2)_4}  
t^4\, {}_2F_1 (a-c+3, b-c+3, 4-c, t) {}_2F_1(-a+c+1, -b+c-1, c+2, t) & &
\nonumber \\
=1 + \frac{2a(b-c+2)}{c(c-2)} t + \frac{a(a-b-1)(c-b-2)}{(c-2)(c-1)c} t^2 & &
\nonumber
\end{eqnarray}
\end{lemma}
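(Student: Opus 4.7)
The plan is to reduce Lemma \ref{hypergeom3} to the already-established Lemma \ref{hypergeom2} via the Gauss differential equation. The key observation is that the two higher-shift factors appearing in Lemma \ref{hypergeom3}, namely ${}_2F_1(a+2,b+2,c+2,t)$ and ${}_2F_1(a-c+3,b-c+3,4-c,t)$, differ from their counterparts in Lemma \ref{hypergeom2} by one additional upward shift of all three parameters by $1$. Since
\[
\tfrac{d}{dt}\,{}_2F_1(a,b,c;t)=\tfrac{ab}{c}\,{}_2F_1(a+1,b+1,c+1;t),
\]
a second derivative, combined with the Gauss equation $t(1-t)F''+[c-(a+b+1)t]F'-abF=0$, expresses ${}_2F_1(a+2,b+2,c+2;t)$ as an explicit $\mathbb{C}(a,b,c,t)[(t(1-t))^{-1}]$-linear combination of ${}_2F_1(a+1,b+1,c+1;t)$ and ${}_2F_1(a,b,c;t)$, and analogously for the second higher-shift factor.

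Substituting these two reductions into the LHS of Lemma \ref{hypergeom3}, every product of hypergeometric factors is rewritten as one of the products appearing in Lemma \ref{hypergeom2} or Lemma \ref{hypergeom1}, multiplied by an explicit rational function of $t$ and $a,b,c$. Collecting the results, the LHS of Lemma \ref{hypergeom3} takes the form $R_{0}(t)\cdot[\mathrm{LHS\ of\ Lemma\ \ref{hypergeom2}}]+R_{1}(t)\cdot[\mathrm{LHS\ of\ Lemma\ \ref{hypergeom1}}]+R_{2}(t)$ for explicit $R_{i}\in\mathbb{C}(a,b,c)(t)$. Applying Lemmas \ref{hypergeom2} and \ref{hypergeom1} to replace those transcendental combinations by their polynomial values collapses the identity to a purely rational equation in $t$ (and the parameters $a,b,c$); clearing denominators yields a polynomial identity in $t$ of bounded degree, verified by matching a small, explicit number of coefficients of powers of $t$.

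The main obstacle will be the algebraic bookkeeping: the contiguous reductions introduce factors of $(t(1-t))^{-1}$ and Pochhammer-type coefficients in $a,b,c$, and one must keep careful track of these through the collection, checking that all $(1-t)^{-1}$ poles cancel in the final assembly (they must, since the RHS of Lemma \ref{hypergeom3} is polynomial in $t$). A complementary check, which could stand alone as a proof, is to observe that both sides of Lemma \ref{hypergeom3}, viewed as formal power series in $t$, are annihilated by a common linear differential operator in $t$ of bounded order with coefficients rational in $a,b,c,t$ (each product of two ${}_2F_1$'s solves an ODE of order at most $4$, so the LHS solves an operator of bounded order); it therefore suffices to match a finite, precomputable number of Taylor coefficients at $t=0$, reducing the identity to a purely mechanical verification.
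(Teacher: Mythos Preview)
Your approach is correct but takes a different route from the paper. The paper organizes all three lemmas via the vector-valued quantities $f=\Phi_0^tQ\Psi_0$, $f_1=(\partial_t\Phi_0)^tQ\Psi_0$, $f_2=(\partial_t^2\Phi_0)^tQ\Psi_0$ and proves each is polynomial of the expected degree by a monodromy/connection-matrix argument at $0$, $1$, $\infty$; Lemma~\ref{hypergeom3} then falls out from the identification $\mathrm{LHS}=\frac{t^2}{(c-2)(c-1)}f_2+\frac{2t}{c-2}f_1+f$. Your plan instead eliminates $f_2$ algebraically via the Gauss equation (equivalently $t(1-t)f_2+[c-(a+b+1)t]f_1-ab\,f=0$), thereby writing $\mathrm{LHS}$ of Lemma~\ref{hypergeom3} as an explicit $\mathbb C(a,b,c,t)$-combination of $f$ and $f_1$, and then invoking Lemmas~\ref{hypergeom1} and~\ref{hypergeom2} to replace those by known polynomials. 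After that it is indeed a rational identity, and the $(1-t)^{-1}$ introduced by the Gauss reduction does cancel. What the paper's approach buys is uniformity and a transparent generalization to the higher $f_n$ used later (Lemmas~5.1--5.2); what your approach buys is that once Lemmas~\ref{hypergeom1} and~\ref{hypergeom2} are in hand, no further analytic continuation or $\Gamma$-function computations are needed, only a mechanical polynomial check. One small remark: your claim that the reduced expression is exactly of the form $R_0\cdot\mathrm{LHS}_2+R_1\cdot\mathrm{LHS}_1+R_2$ is not obvious a priori (four products $P_1,\dots,P_4$ appear with only two relations), but it is in fact forced, since in the paper's language both $\mathrm{LHS}_1,\mathrm{LHS}_2$ and the reduced $\mathrm{LHS}_3$ lie in the $\mathbb C(t)$-span of $f$ and $f_1$; you may want to note this to avoid the impression of an overdetermined linear system.
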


\begin{proof}
This is proved in Section 5. 
\end{proof}

Again, 
it is straightforward to check that 
\begin{eqnarray}
& & \varphi_{1, 1}(z, w) 
\\
& = & 
\left[
\frac{\alpha_2 \beta_2}{t} + \frac{(\beta_2 - \alpha_2 +1) \gamma}{1-t} 
- \frac{\gamma(\gamma-1)t}{(1-t)^2}
\right]
\left[
1 -\frac{2a}{c} t + \frac{a(a-b-1)}{c(c-b-1)} t^2
\right]
\nonumber \\
& + & 
(1-c) \left[
\frac{\beta_2 - \alpha_2 +1}{t} - \frac{2 \gamma}{1-t} 
\right]
\left[
1 + \frac{a(b-2c+2)}{c(c-1)} t + \frac{a(a-b-1)}{c(c-1)} t^2
\right]
\nonumber \\
& - & 
\frac{(1-c)(2-c)}{t}
\left[
1 + \frac{2a(b-c+2)}{c(c-2)} t + \frac{a(a-b-1)(c-b-2)}{(c-2)(c-1)c} t^2
\right]
\nonumber 
\end{eqnarray}
using (3.20), (3.34), and (3.38). 

\begin{lemma} \label{phi11}
We have $\varphi_{1, 1}(z, w)  \in A_2$ and $\varphi_{1, 1}(z, w) = \varphi_{1, 1}(w, z)$. 
\end{lemma}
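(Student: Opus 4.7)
The plan is to mirror the computational strategy of Lemma~\ref{phi1001}, working directly from the closed-form expression (3.39) that has been obtained by collapsing the four infinite hypergeometric series via the identities (3.20), (3.34), and Lemma~\ref{hypergeom3}. Two things need checking: containment of $\varphi_{1,1}$ in $A_2$ and the symmetry under $z \leftrightarrow w$.

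Membership in $A_2$ is essentially immediate. Because $\varphi_{1,1}$ is homogeneous of total degree zero in $(z,w)$---the two derivatives bring the degree-two base $\Phi_i \Psi_i$ down to zero, which is why (3.39) involves no explicit $z$ or $w$ factors and depends only on $t = w/z$---one has $\varphi_{1,1}(z,w) = f(t)$ with $f$ a rational function of $t$. Inspection of (3.39) shows that the only singularities of $f$ are at $t = 0$ and $t = 1$, with the pole at $t = 1$ of order at most two (the highest being the term $-\gamma(\gamma-1)t/(1-t)^2$ in the first bracket). Since $A_2 = \mathbb{C}[z^{\pm 1}, w^{\pm 1}] (z-w)^{-2}$ permits arbitrary negative powers of $z$ and $w$, the apparent $1/t$ poles need not cancel, and $\varphi_{1,1} \in A_2$ follows without further work.

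For the symmetry $\varphi_{1,1}(z,w) = \varphi_{1,1}(w,z)$, the substitution $z \leftrightarrow w$ amounts to $t \mapsto 1/t$. Writing $f(t) = h(t)/(1-t)^2$ after clearing the $(1-t)^2$ denominator, the required condition becomes the palindromy $h(t) = t^2 h(1/t)$ on the Laurent-polynomial numerator. My plan is to substitute the numerical values $a = 1/\varkappa$, $b = (n+2)/\varkappa - 1$, $c = (n+1)/\varkappa$, $\alpha_2 = (n-1)/(2\varkappa)$, $\beta_2 = 1 - (n+2)/(2\varkappa)$, $\gamma = 1/(2\varkappa)$ into (3.39), combine the three summands over the common denominator $t(1-t)^2$, and compare the coefficients $a_k$ and $a_{2-k}$ of the resulting Laurent polynomial.

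The main obstacle is the volume of algebra, not any missing ingredient. The symmetry is not term-by-term in (3.39), because $t \mapsto 1/t$ exchanges the roles of $\alpha_2$ and $\beta_2$, so the palindromy emerges only after summing contributions from all three summands. The underlying reason it holds is the same one that made $\varphi_{0,0}(z,w)$ symmetric in (3.25): the ratio $X^{n-1}_{1,n}/X^{n}_{1,n+1}$ was fixed in (3.23) precisely so that the combination $\Phi_1\Psi_1 + (\text{ratio}) \Phi_2\Psi_2$, and hence its mixed second derivative, transforms predictably under $z \leftrightarrow w$. Carefully tracking this cancellation through the differentiation is the principal bookkeeping task; no hypergeometric identity beyond the three already invoked is required.
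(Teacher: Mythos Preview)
Your proposal is correct and matches the paper's approach: the paper's proof reads in full ``This is shown by direct calculation. The details are omitted,'' and what you have written is precisely a sensible blueprint for that direct calculation---read off the rational function of $t$ from (3.39), observe that the pole at $t=1$ has order at most $2$ to get $A_2$-membership, and then verify the palindromy $h(t)=t^2 h(1/t)$ after clearing $(1-t)^2$. Your remark that the $1/t$ pole need not cancel (since $A_2$ allows arbitrary Laurent monomials in $z,w$) is a useful clarification the paper does not spell out.
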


\begin{proof}
This is shown by direct calculation. The details are omitted.
\end{proof}

\begin{corollary} \label{phikl}
For any $k, l \geq 0$, 
we have 
$\varphi_{k, l}(z, w) \in A_{k+l}$. 
Furthermore 
$\varphi_{k, l}(z, w) = \varphi_{l, k} (w, z)$. 
\end{corollary}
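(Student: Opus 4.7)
The plan is to induct on $N = k + l$, with the base cases $N \leq 2$, i.e.\ $(k, l) \in \{(0, 0), (1, 0), (0, 1), (1, 1)\}$, supplied by (3.33) together with Lemmas \ref{phi1001} and \ref{phi11}. For the inductive step, assume the claim for all $(k', l')$ with $k' + l' < N$, where $N \geq 2$, and let $(k, l)$ satisfy $k + l = N$. If $k \geq 2$, apply $\partial_z^{k - 2} \partial_w^l$ to the BPZ equation (3.1) and expand by Leibniz; writing its right-hand side as $R \Phi + S \partial_z \Phi + T \partial_w \Phi$ with
\[
R = \frac{\triangle(n)}{\varkappa z^2} + \frac{\triangle(1)}{\varkappa(z - w)^2}, \qquad S = -\frac{1}{\varkappa z}, \qquad T = \frac{w}{\varkappa z (z - w)},
\]
and noting that these coefficients are universal---identical for both $\Phi_1$ and $\Phi_2$---while $\Psi_i$ enters $\varphi_{k, l}$ only multiplicatively, one obtains
\[
\varphi_{k, l}(z, w) \;=\; \sum_{k' + l' \leq N - 1} C_{k', l'}(z, w)\, \varphi_{k', l'}(z, w),
\]
where each $C_{k', l'}$ is a partial derivative of $R$, $S$, or $T$. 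If instead $k \leq 1$ and $l \geq 2$, reduce symmetrically using (3.2).

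I would then check by direct pole-counting that every summand sits in $A_N$. The only $(z - w)$ singularities in the BPZ coefficients are the $(z - w)^{-2}$ in $R$ and the $(z - w)^{-1}$ in $T$. Differentiating $\partial_z^{j_1} \partial_w^{j_2}$ raises these pole orders by exactly $j_1 + j_2$, giving coefficients in $A_{j_1 + j_2 + 2}$ or $A_{j_1 + j_2 + 1}$; these are paired with $\varphi_{k - 2 - j_1,\, l - j_2}$ and $\varphi_{k - 2 - j_1,\, l - j_2 + 1}$, which by the inductive hypothesis lie in $A_{N - 2 - j_1 - j_2}$ and $A_{N - 1 - j_1 - j_2}$ respectively, so the products land in $A_N$. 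The contributions from $S$ and from the $\triangle(n)/(\varkappa z^2)$ piece of $R$ carry no $(z - w)$ singularity and deliver terms in $A_{N - 1}$ and $A_{N - 2}$, both inside $A_N$.

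For the symmetry $\varphi_{k, l}(z, w) = \varphi_{l, k}(w, z)$, the coefficients of (3.1) and (3.2) are interchanged under $z \leftrightarrow w$: writing $R', S', T'$ for the coefficients of (3.2), one checks directly that $R'(w, z) = R(z, w)$, $S'(w, z) = S(z, w)$, and $T'(w, z) = T(z, w)$. Consequently the reduction of $\varphi_{k, l}(z, w)$ via (3.1) and the reduction of $\varphi_{l, k}(z, w)$ via (3.2) are related by $z \leftrightarrow w$; combining with the inductive symmetry $\varphi_{k', l'}(z, w) = \varphi_{l', k'}(w, z)$ for $k' + l' < N$ transfers the identity to $(k, l)$. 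The main obstacle is the pole-order bookkeeping: the $(z - w)^{-2}$ singularity in $R$ must be matched exactly by the two-unit drop in $k' + l'$ for the paired $\varphi$, so that no summand escapes $A_N$. Once this balance is verified the recursion closes and the corollary follows.
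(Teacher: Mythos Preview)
Your proof is correct and follows the same strategy as the paper: reduce $\varphi_{k,l}$ via the BPZ equations (3.1)--(3.2) to the four base cases $\varphi_{0,0},\varphi_{1,0},\varphi_{0,1},\varphi_{1,1}$, track the $(z-w)$--pole order through the recursion, and use the $z\leftrightarrow w$ symmetry between (3.1) and (3.2) for the second assertion. The paper's proof is a two-sentence sketch of exactly this; you have supplied the explicit Leibniz bookkeeping and pole-count that the paper leaves implicit. One cosmetic point: your phrase ``base cases $N\le 2$'' is slightly off, since $(2,0)$ and $(0,2)$ have $N=2$ but are handled by your inductive step, not by the listed base cases---the genuine base set is $\{(0,0),(1,0),(0,1),(1,1)\}$, i.e.\ those $(k,l)$ with $k\le 1$ and $l\le 1$.
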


\begin{proof}
Since $\Phi_{1, 2}$ satisfy the equations (3.1)-(3.2), 
any $\varphi_{k, l}(z, w)$ can be expressed in terms of 
$\varphi_{0, 0}$, $\varphi_{1, 0}$, $\varphi_{0, 1}$, and $\varphi_{1, 1}$, 
and has a pole of order at most $k+l$ at $z=w$ . 
Lemma \ref{phi1001}, \ref{phi11}, 
and the symmetry of (3.1) and (3.2) with respect to the interchange of $z$ and $w$ 
imply that $\varphi_{k, l}(z, w) = \varphi_{l, k} (w, z)$.
\end{proof}

\noindent
{\bf Step 2} is to show that 
$\sum_{i=0}^N {N \choose i} \varphi_{k+i, l+N-i} \in A_{k+l}$ for any $k, l, N\geq 0$. 

\begin{lemma} \label{bino}
Let $t = w/z$. 
Then 
\begin{eqnarray} 
& &  
\sum_{i=0}^N {N \choose i} \partial_z^i \partial_w^{N-i} [z^\Lambda h(t)]
= 
z^{\Lambda -N} \sum_{j=0}^N {N \choose j} (\Lambda -N +1)_j (1-t)^{N-j} \partial_t^{N-j} h(t).
\end{eqnarray}
\end{lemma}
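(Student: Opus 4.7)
The plan is to recognize the left-hand side as a power of a single vector field and then expand by iteration. Since $\partial_z$ and $\partial_w$ commute, the binomial theorem gives $\sum_{i=0}^N \binom{N}{i}\partial_z^i\partial_w^{N-i} = (\partial_z + \partial_w)^N$, so the identity becomes
\[
(\partial_z + \partial_w)^N[z^\Lambda h(t)] = z^{\Lambda - N} \sum_{j=0}^N \binom{N}{j}(\Lambda - N + 1)_j (1-t)^{N-j} h^{(N-j)}(t).
\]
A direct chain-rule computation with $t = w/z$ yields $\partial_w[z^\Lambda h(t)] = z^{\Lambda - 1} h'(t)$ and $\partial_z[z^\Lambda h(t)] = z^{\Lambda - 1}[\Lambda h(t) - t h'(t)]$, hence $(\partial_z + \partial_w)[z^\Lambda h(t)] = z^{\Lambda - 1} D_\Lambda h(t)$, where I abbreviate $D_\mu := \mu + (1-t)\partial_t$.

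Iterating is immediate: each application of $\partial_z + \partial_w$ to an expression of the form $z^{\Lambda - k} G(t)$ drops the power of $z$ by one and replaces $G$ by $D_{\Lambda - k} G$. Therefore
\[
(\partial_z + \partial_w)^N[z^\Lambda h(t)] = z^{\Lambda - N} \bigl(D_{\Lambda - N + 1} D_{\Lambda - N + 2} \cdots D_{\Lambda}\bigr) h(t),
\]
and it remains to expand the product of $D$-operators. I would do this by induction on $N$ with the coefficients $a_{N, j} := \binom{N}{j}(\Lambda - N + 1)_j$ as the inductive hypothesis; the $N = 0$ and $N = 1$ cases are immediate.

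For the inductive step, apply $D_{\Lambda - N}$ to the $N$-th expression, using the Leibniz-type formula
\[
(1-t)\partial_t\bigl[(1-t)^{N-j} h^{(N-j)}\bigr] = -(N-j)(1-t)^{N-j}h^{(N-j)} + (1-t)^{N-j+1}h^{(N-j+1)}.
\]
Collecting by $(1-t)^{N+1-i}h^{(N+1-i)}$, the coefficient comes out to $(\Lambda - 2N + i - 1)\,a_{N, i-1} + a_{N, i}$, and what remains is to verify that this equals $a_{N+1, i}$. Factoring out $(\Lambda - N + 1)_{i-1}$ and using $(\Lambda - N + 1)_{i-1}(\Lambda - N + i) = (\Lambda - N + 1)_i$ together with Pascal's identity $\binom{N+1}{i} = \binom{N}{i-1} + \binom{N}{i}$, this reduces to the elementary identity $i\binom{N}{i} = (N-i+1)\binom{N}{i-1}$.

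The whole argument is driven by the observation that $(\partial_z + \partial_w)^N$ acts on $z^\Lambda h(w/z)$ through the commuting scalar-shift operators $D_\mu$, after which everything is forced. There is no real conceptual obstacle; the one place where care is needed is the final bookkeeping step where one has to match coefficients and recognize the resulting combinatorial identity as a rearrangement of Pascal's rule.
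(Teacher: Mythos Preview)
Your proof is correct and takes a genuinely different route from the paper's. The paper first establishes the auxiliary formula
\[
\partial_z^n\bigl(z^\Lambda h(t)\bigr) = \sum_{i=0}^n (-1)^i \binom{n}{i} (\Lambda-n+1)_{n-i}\, z^{\Lambda-n-i} w^i\, \partial_t^i h(t),
\]
and then combines this with the (trivial) formula $\partial_w^m\bigl(z^\Lambda h(t)\bigr)=z^{\Lambda-m}h^{(m)}(t)$ and sums over the binomial weights. Your approach sidesteps this intermediate step entirely by collapsing the binomial sum into $(\partial_z+\partial_w)^N$ and observing that $\partial_z+\partial_w$ acts on $z^\Lambda h(w/z)$ simply as $z^{\Lambda}h\mapsto z^{\Lambda-1}D_\Lambda h$ with $D_\mu=\mu+(1-t)\partial_t$; this turns the problem into expanding the ordered product $D_{\Lambda-N+1}\cdots D_\Lambda$, which you handle cleanly by induction. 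Your route is arguably more transparent, since it never requires tracking the more complicated $\partial_z^n$ expansion; the paper's approach, on the other hand, produces the formula for $\partial_z^n$ as a byproduct, which could be of independent use. Both arguments ultimately hinge on the same combinatorial bookkeeping at the end.
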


\begin{proof}
First, 
we show that
\begin{eqnarray}
\partial_z^n ( z^\Lambda h(t)) 
& = & 
\sum_{i=0}^n (-1)^i { n \choose i} (\Lambda - n+1)_{n-i} z^{\Lambda -n-i} w^i \partial^i_t h(t). 
\end{eqnarray}
The rest is straightforward. 
Details are omitted. 
\end{proof}

\begin{corollary} \label{blah}
For any $k, l, N \geq 0$, 
we have
$$
\sum_{i=0}^N {N \choose i} \varphi_{k+i, l+N-i} \in A_{k+l}. 
$$
\end{corollary}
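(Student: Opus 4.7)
The strategy is to use Lemma \ref{bino} to extract explicit $(1-t)^{N-s}$ factors from the binomial sum and thereby sharpen the $A_{k+l+N}$ bound (inherited termwise from Corollary \ref{phikl}) to $A_{k+l}$. Starting from $\varphi_{k,l} = (\partial_z^k\partial_w^l\Phi_1)\Psi_1 + c'(\partial_z^k\partial_w^l\Phi_2)\Psi_2$ with $c' = X_{1,n}^{n-1}/X_{1,n+1}^n$, and using that $\Psi_1,\Psi_2$ do not depend on the summation index $i$, one writes
\[
\sum_{i=0}^N \binom{N}{i}\varphi_{k+i,l+N-i} \;=\; (\partial_z^k\partial_w^l T_1)\Psi_1 + c'(\partial_z^k\partial_w^l T_2)\Psi_2,\qquad T_j \;=\; \sum_{i=0}^N \binom{N}{i}\partial_z^{i}\partial_w^{N-i}\Phi_j.
\]
With $\Phi_j(z,w) = z^{\Lambda_j}h_j(t)$ extracted from Proposition \ref{matrixcoefficients} (so $h_j(t) = t^{\beta_j}(1-t)^\gamma F_j(t)$ with $\gamma = 1/(2\varkappa)$), Lemma \ref{bino} yields
\[
T_j \;=\; z^{\Lambda_j - N}\sum_{s=0}^N \binom{N}{s}(\Lambda_j - N + 1)_s\,(1-t)^{N-s}\,\partial_t^{N-s}h_j(t).
\]

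The crucial observation is that the prefactor $(1-t)^{N-s}$ exactly cancels the $(N-s)$-fold worsening of the $t=1$ singularity produced by $\partial_t^{N-s}$ acting on the $(1-t)^\gamma$ branch of $h_j$. Hence $T_j$ has the same minimal $(1-t)$-exponent at $t=1$ as $\Phi_j$ itself; applying $\partial_z^k\partial_w^l$ can lower that exponent by at most $k+l$, and multiplying by $\Psi_j$ cancels the residual $\gamma$ shift (since $\Psi_j$ is obtained from $\Phi_j$ by $\varkappa \mapsto -\varkappa$ and so carries a $(1-t)^{-\gamma}$ factor). Each summand on the right-hand side therefore has minimal $(1-t)$-exponent $\geq -(k+l)$. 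Since Corollary \ref{phikl} already guarantees the entire sum is rational, the non-integer branches must cancel between the two summands, leaving a pole at $z=w$ of order at most $k+l$. Combined with the a priori membership in $A_{k+l+N}$, this forces the sum into $A_{k+l}$.

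The main technical hurdle is keeping the branch analysis honest: each individual $T_j\Psi_j$ is transcendental, since the hypergeometric series $F_j$ and $\bar F_j$ carry several non-integer branches at $t=1$, and only the specific combination appearing in $\varphi_{k,l}$ (as ensured by the identities of Lemmas \ref{hypergeom1}--\ref{hypergeom3}) is rational. Fortunately, rationality of the full sum is already known from Corollary \ref{phikl}, so the singularity analysis needs only to bound the minimal $(1-t)$-exponent that can appear in either transcendental summand — a bound that $(1-t)^{N-s}$ delivers at once via Lemma \ref{bino}. No new hypergeometric identity is required.
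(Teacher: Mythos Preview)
Your approach is sound in outline but differs from the paper's, and one statement needs sharpening.

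The paper does not argue via branch exponents. After applying Lemma~\ref{bino} it invokes (3.42), which identifies the combination $h_1^{(i)}\Psi_1 + c' h_2^{(i)}\Psi_2$ directly with $z^{-\Lambda+i}\varphi_{0,i}$; since $\varphi_{0,i}\in A_i$ and $z^{\Lambda-N}(1-t)^{N-s}=z^{-N}(z-w)^{N-s}\cdot z^{\Lambda-(N-s)}$, each $s$-term becomes a Laurent polynomial times $\varphi_{0,N-s}$. A Leibniz expansion of $\partial_z^k\partial_w^l$ against the factor $z^{-N}(z-w)^{N-s}$ then bounds the pole at $z=w$ by $k+l$ purely algebraically. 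No analysis near $t=1$ is used.

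Your route bounds branches of each transcendental summand $(\partial_z^k\partial_w^l T_j)\Psi_j$ at $t=1$ and invokes rationality of the sum to cancel the non-integer ones. This works, but the assertion ``each summand has minimal $(1-t)$-exponent $\ge -(k+l)$'' is not literally true. The cross term coming from the $(1-\tfrac{3}{2\varkappa})$-branch of $T_j$ and the $(-\gamma)$-branch of $\Psi_j$ has exponent $1-\tfrac{2}{\varkappa}-(k+l)$, which for suitable $\varkappa$ lies below $-(k+l)$. What is true, and is all you need, is that the \emph{trivial-monodromy} (integer-exponent) part of each summand has minimal exponent $\ge -(k+l)$: the two integer branches are $-(k+l)$ and $2-(k+l)$. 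Since the sum is rational by Corollary~\ref{phikl}, the non-integer parts cancel between $j=1$ and $j=2$, and the conclusion follows. With this refinement your argument is complete.

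The paper's route is shorter and stays within $A_{k+l}$-arithmetic; yours makes the role of the $(1-t)^{N-s}$ factor from Lemma~\ref{bino} more transparent but requires tracking the full Frobenius decomposition at $t=1$.
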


\begin{proof}
Write $\Phi_{1, 2}(z, w) = z^{\alpha_1 + \beta_1} h_{1, 2}(t)$. 
Then 
\begin{eqnarray}
h_1^{(i)} (t) \Psi_1 + \frac{X_{1, n}^{n-1}}{X_{1, n+1}^n}  h_2^{(i)} (t) \Psi_2  
& = & 
z^{-\alpha_1-\beta_1+i} 
\left(
(\partial_w^i \Phi_1) \Psi_1 + \frac{X_{1, n}^{n-1}}{X_{1, n+1}^n} (\partial_w^i \Phi_2) \Psi_2
\right)
 \\
& = & 
z^{-\alpha_1-\beta_1+i} \varphi_{0, i} 
\in A_i. 
\nonumber 
\end{eqnarray}
Then, 
\begin{eqnarray}
\sum_{i=0}^N {N \choose i} \varphi_{k+i, l+N-i} 
& = & 
\partial_z^k \partial_w^l 
\left[ \sum_{i=0}^N {N \choose i} \partial_z^i \partial_w^{N-i} (z^{\alpha_1+\beta_1} h_1(t)) \right] \Psi_1
\\
& + & 
\frac{X_{1, n}^{n-1}}{X_{1, n+1}^n}
\partial_z^k \partial_w^l 
\left[ \sum_{i=0}^N {N \choose i} 
\partial_z^i \partial_w^{N-i} 
(z^{\alpha_1+\beta_1} h_2(t)) \right]  \Psi_2. 
\nonumber
\end{eqnarray}
Using Lemma \ref{bino} and (3.42), 
it is not difficult to see that (3.43) belongs to $A_{k+l}$. 
\end{proof}

\noindent
{\bf Step 3} is to show that Proposition \ref{extra} holds for matrix coefficients
determined by all pairs of (${\bf v}_m^*$, ${\bf v}_n$), $m, n \geq 0$. 
Step 1 and Step 2 already showed that the first half of Proposition \ref{extra} and (3.29) 
are true for matrix coefficients determined by 
(${\bf v}_n^*$, ${\bf v}_n$)
and $k' =l' =0$. 
It is easier to verify that the same are true for other pairs of 
(${\bf v}_m^*$, ${\bf v}_n$) and $k'=l'=0$, 
where $m= n+2$ or $n-2$ if $n\geq 2$, and only $n+2$ if $n \leq 1$, 
because they don't involve any hypergeometric series. 
Let 
$$
D = L_{-1} + \bar L_{-1}
$$ 
be the translation operator. 
Then $[D, L_{-1}] = [D, \bar L_{-1}] = 0$ and for any $v \in W(1)$, 
we have 
$$
[D, Y(v, z) ] = Y( D v, z) = \partial_z Y(v, z). 
$$
Therefore,  
the matrix coefficient 
\begin{eqnarray}
({\bf v}_m^*, Y(L_{-1}^k \bar L_{-1}^{k'} {\bf v}_1, z) Y(L_{-1}^l \bar L_{-1}^{l'} {\bf v}_1, w) {\bf v}_n ) 
\end{eqnarray}
can be expressed as linear combinations of 
$$
\partial_z^{k'-p} \partial_w^{l' -q} 
({\bf v}_m^*, Y(L_{-1}^{k+p} {\bf v}_1, z) Y(L_{-1}^{l+q} {\bf v}_1, w) ] {\bf v}_n ),
$$
where $0 \leq p \leq k', 0 \leq q \leq l'$. 
Then, 
one verifies that 
Proposition \ref{extra} is in fact true for all matrix coefficients determined by the highest weight vectors
${\bf v}_m^*$, ${\bf v}_n$. 
The details are tedious, therefore omitted. 

\medskip
\noindent
{\bf Step 4} is the induction part. 
Suppose Proposition \ref{extra} is true for the pair $(v_m^*, {\bf v}_n)$, 
we will show that it is true for $(L_{-p} v_m^*, {\bf v}_n)$, where $p>0$. 
The first half is easy to show once we establish the second half, i.e.
\begin{eqnarray}
\sum_{i=0}^N {N \choose i} 
\phi(L_{-p} v_m^*, {\bf v}_n, k+i, k', l+N-i, l'; z, w) & \in & A_{k+l}.
\end{eqnarray}
We need the commutator relation
\begin{eqnarray}
[L_p,  Y(L_{-1}^k {\bf v}_1, z) ] 
& = & 
\sum_{q \geq 0} z^{p+1-q} {p+1 \choose q} Y(L_{q-1} L_{-1}^k {\bf v}_1, z), 
\end{eqnarray}
which follows from the property of intertwining operators and the definition (3.13)-(314) of $Y$. 
We also need the following two lemmas:

\begin{lemma}\label{l1}
For any $k  \geq 0, q \geq 0$,
we have  
$$
L_{q-1} (L_{-1}^k {\bf v}_1)
= q! \left[ {k \choose q-1} \triangle(1)  + {k \choose q} \right] 
L_{-1}^{k-q+1} {\bf v}_1. 
$$
\end{lemma}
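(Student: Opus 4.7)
The plan is to reduce the statement to a purely Virasoro identity and then prove it by induction on $k$. Since the operators $L_n$ (for $n \in \mathbb Z$) act only on the first tensor factor of $\mathbf v_1 = v_{\triangle(1),c}\otimes v_{\bar\triangle(1),\bar c}$, and $L_{-1}^k\mathbf v_1 = (L_{-1}^k v_{\triangle(1),c})\otimes v_{\bar\triangle(1),\bar c}$, the claim reduces to the following assertion in $M(\triangle(1),c)$: for any highest weight vector $v$ of weight $h$ (so $L_0 v = hv$ and $L_n v = 0$ for $n>0$), one has
$$
L_{q-1}(L_{-1}^k v) \;=\; q!\left[\binom{k}{q-1} h + \binom{k}{q}\right] L_{-1}^{k-q+1} v,
$$
with the usual convention $\binom{k}{j}=0$ for $j<0$ or $j>k$.

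I would first dispose of the base case $k=0$ by direct inspection: $L_{q-1}v$ equals $L_{-1}v$, $hv$, $0$ for $q=0,1$, and $q\geq 2$ respectively, which matches the RHS since $\binom{0}{j}=\delta_{j,0}$ (and $\binom{0}{-1}=0$). For the inductive step from $k-1$ to $k$, I would exploit the Virasoro relation $[L_{q-1},L_{-1}] = q L_{q-2}$, which is a special case of $[L_m,L_n]=(m-n)L_{m+n}$. This gives
$$
L_{q-1}L_{-1}^k v \;=\; L_{-1}\bigl(L_{q-1} L_{-1}^{k-1} v\bigr) \;+\; q\, L_{q-2} L_{-1}^{k-1} v.
$$

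Applying the inductive hypothesis to both terms on the right, the first contributes $q!\bigl[\binom{k-1}{q-1}h + \binom{k-1}{q}\bigr] L_{-1}^{k-q+1}v$ and the second contributes $q\cdot(q-1)!\bigl[\binom{k-1}{q-2}h + \binom{k-1}{q-1}\bigr] L_{-1}^{k-q+1}v$. Summing and invoking Pascal's identity $\binom{k-1}{j-1}+\binom{k-1}{j}=\binom{k}{j}$ twice (once for the coefficient of $h$, once for the constant term) yields exactly the desired right-hand side with $L_{-1}^{k-q+1}v$ and coefficient $q!\bigl[\binom{k}{q-1}h + \binom{k}{q}\bigr]$.

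There is no genuine obstacle here; the result is pure bookkeeping with Virasoro commutators and Pascal's identity. The only point that merits a brief comment in the write-up is the consistency of the edge cases—namely $q=0$ (where the formula reads $L_{-1}(L_{-1}^k v) = L_{-1}^{k+1}v$) and $q > k+1$ (where both binomial coefficients vanish, matching the fact that too many applications of positive-mode operators annihilate $L_{-1}^k v$). Both follow transparently from the standard binomial convention, so no additional argument is needed.
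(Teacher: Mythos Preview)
Your argument is correct: the induction on $k$ using $[L_{q-1},L_{-1}]=qL_{q-2}$ and Pascal's identity is exactly the routine computation the paper alludes to when it says ``The proof is straightforward.'' There is nothing to add.
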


\begin{proof}
The proof is straightforward.
\end{proof}

\begin{lemma}\label{l2}
For any $0 \leq i \leq N$, $q \geq 0$, 
we have 
$$
{N \choose i} {k +i \choose q} = \sum_{j=0}^q {N \choose j} {k \choose q-j} {N-j \choose N-i}. 
$$
\end{lemma}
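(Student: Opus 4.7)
The plan is to prove the identity by rearranging the right-hand side into a product of $\binom{N}{i}$ with a Vandermonde convolution that collapses to $\binom{k+i}{q}$. There is nothing analytic here; it is a purely combinatorial manipulation of three binomial coefficients, so the main task is simply to expose the correct refactoring.

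First I would use the symmetry $\binom{N-j}{N-i} = \binom{N-j}{i-j}$, valid for $0 \le j \le i \le N$ (the summand vanishes outside this range, so extending the sum is harmless). Next I would apply the standard trinomial revision identity
\[
\binom{N}{j}\binom{N-j}{i-j} \;=\; \binom{N}{i}\binom{i}{j},
\]
which is immediate from the factorial expansion of both sides. This pulls $\binom{N}{i}$ outside the sum, leaving
\[
\sum_{j=0}^{q}\binom{N}{j}\binom{k}{q-j}\binom{N-j}{N-i}
\;=\;
\binom{N}{i}\sum_{j=0}^{q}\binom{i}{j}\binom{k}{q-j}.
\]

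Finally I would invoke the Vandermonde convolution
\[
\sum_{j=0}^{q}\binom{i}{j}\binom{k}{q-j} \;=\; \binom{k+i}{q},
\]
which yields the claimed equality. The only point that requires a line of justification is that terms with $j>i$ contribute zero (since $\binom{N-j}{N-i}=0$ there), which legitimizes the use of the trinomial revision identity for all $j$ in the original summation range. I do not anticipate any genuine obstacle; this lemma is bookkeeping that feeds into the commutator computation of the preceding step.
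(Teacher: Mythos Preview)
Your proof is correct. The paper takes a different route: it extracts the coefficient of $y^{N-i}x^{q}$ from $(1+x)^{k}(1+x+y)^{N}$ by expanding the trinomial in two ways, once as $(1+x)^{k}\bigl((1+x)+y\bigr)^{N}$ (yielding the left-hand side) and once as $(1+x)^{k}\bigl(x+(1+y)\bigr)^{N}$ (yielding the right-hand side). Your argument instead works directly on the sum, using the trinomial revision $\binom{N}{j}\binom{N-j}{i-j}=\binom{N}{i}\binom{i}{j}$ to factor out $\binom{N}{i}$ and then Vandermonde to collapse the remaining convolution. Both are one-line elementary proofs; the generating-function version has the advantage of being self-contained (no named identities invoked), while yours makes the mechanism---that the identity is Vandermonde in disguise after a change of summation basis---completely transparent.
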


\begin{proof}
This can be proved by taking the coefficient of $y^{N-i} x^q$ in the power series expansion of the polynomial 
$(1+x)^k (1+x+y)^N$ in two different ways:
$(1+x)^k ( (1+x) +y)^N$
and 
$(1+x)^k (x+ (1+ y))^N$. 
\end{proof}

Using (3.46) and Lemma \ref{l1}, 
we can write the left-hand-side of (3.45) as 
$$ I + II + III + IV \quad (\text{mod  }  A_{k+l})
$$
where 
\begin{eqnarray}
I & = & 
\sum_{q =0}^N 
{p+1 \choose q}  z^{p+1-q} q! \triangle(1)
\sum_{i= \text{max} (0, q-k-1) }^N 
{N \choose i}
{k+i \choose q-1} 
\phi(v_m^*, {\bf v}_n^*, k+i-q+1, k', l+N-i, l'; z, w)
\nonumber \\
II & = &
\sum_{q=0}^N 
{p+1 \choose q}  z^{p+1-q} q! 
\sum_{i= \text{max} (0, q-k-1) }^N 
{N \choose i}
{k+i \choose q} 
\phi(v_m^*, {\bf v}_n^*, k+i-q+1, k', l+N-i, l'; z, w)
\nonumber 
\end{eqnarray}
\begin{eqnarray}
III & =& 
\sum_{q = 0}^N {p+1 \choose q} w^{p+1-q} q!
\triangle(1) 
\sum_{i=0}^{ \text{min} (N, l + N -q +1)}  
{N \choose i}
{l+N-i  \choose q-1} 
\phi(v_m^*, {\bf v}_n^*, k+i, k', l+ N -i -q +1, l'; z, w)
\nonumber \\
IV & = &
\sum_{q = 0}^N {p+1 \choose q} w^{p+1-q} q!
\sum_{i=0}^{ \text{min} (N, l + N -q +1)}  
{N \choose i}
{l+N-i  \choose q} 
\phi(v_m^*, {\bf v}_n^*, k+i, k', l+ N -i -q +1, l'; z, w). 
\nonumber
\end{eqnarray}
Then using Lemma \ref{l2} and the induction hypothesis, 
one shows that 
$I, II, III + IV \in A_{k+l}$.
The details are omitted. 
The same arguments apply when $L_{-p}$ is replaced by $\bar L_{-p}$. 
Once we prove Proposition \ref{extra} for all matrix coefficients of type 
$(v_m^*, {\bf v}_n)$, 
we implement the same induction process to the second entry, 
and hence prove the proposition for all $(v_m^*, v_n)$.

\section{Induction construction of vertex operators}

In Section 3, 
we defined the vertex operators associated to elements of $W(1)$ and proved the locality.  
Now, 
we use the Reconstruction Theorem to extend the structure to the whole space
$$
W = \bigoplus_{n \geq 0} W(n), \qquad W(n) = L(\triangle(n), c) \otimes L(\bar \triangle(n), \bar c). 
$$

\begin{theorem}{[FB]}  \label{reconstruction}
(Strong Reconstruction Theorem)
Let $V$ be a vector space, ${\bf 1}$ a non-zero vector, and $D$ an endomorphism of $V$. 
Let $\{a^s\}_{s \in S}$ be a collection of vectors in $V$. 
Suppose we are also given fields
$$
a^s(z) = \sum_{n \in \mathbb Z} a^s(n) z^{-n-1}
$$
such that 
\begin{enumerate}
\item 
For all $x$, $a^s(z) {\bf 1} = a^s + z( \ldots)$. 

\item 
$D{\bf 1} =0$ and $[D, a^s(z)] = \partial_z a^s(z)$. 

\item 
All fields $a^s(z)$ are mutually local. 

\item 
$V$ is spanned by the vectors
$$
a^{s_1}(j_n) \ldots a^{s_n}(j_n) {\bf 1}, \quad  j_i <0. 
$$
\end{enumerate}
Then these structures together with 
the assignment
$$
Y(a^{s_1}(j_1)\ldots a^{s_n}(j_n) {\bf 1}, z)
= \frac{1}{(-j_1-1)! \ldots (-j_n-1)! }
: \partial_z^{-j_1-1} a^{s_1}(z) \ldots \partial_z^{-j_n-1} a^{s_n}(z) :
$$
give rise to a well-defined vertex algebra structure on $V$. 
\end{theorem}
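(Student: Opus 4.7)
The plan is to follow the standard two-pillar strategy for reconstruction theorems: Goddard's Uniqueness Theorem combined with Dong's Lemma. The overall structure is (i) show the proposed $Y$ is a well-defined assignment from $V$ to fields, depending only on the vector and not on its presentation as $a^{s_1}(j_1)\cdots a^{s_n}(j_n){\bf 1}$; (ii) derive the remaining vertex algebra axioms.

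First I would extract the creation property $a^s(z){\bf 1} = e^{zD} a^s$ from the hypotheses: the series $u(z) = a^s(z){\bf 1}$ satisfies $\partial_z u(z) = D u(z)$ because $[D, a^s(z)] = \partial_z a^s(z)$ and $D {\bf 1} = 0$, while $u(0) = a^s$ by the vacuum condition. Consequently $a^s(-k-1){\bf 1} = D^k a^s / k!$, and by induction on depth, applying any proposed field $:\partial_z^{-j_1-1}a^{s_1}(z)\cdots\partial_z^{-j_n-1}a^{s_n}(z):$ to ${\bf 1}$ yields a power series in $z$ whose constant term is precisely the vector $a^{s_1}(j_1)\cdots a^{s_n}(j_n){\bf 1}$. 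By hypothesis (4), every vector in $V$ is such a value.

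Next I would establish Goddard's Uniqueness: any field $\phi(z)$ that is mutually local with every $a^s(z)$ and satisfies $\phi(z){\bf 1} = e^{zD} b$ is uniquely determined by $b \in V$. The argument moves each $a^{s_i}(w_i)$ past $\phi(z)$ on a spanning vector $a^{s_1}(j_1)\cdots a^{s_n}(j_n){\bf 1}$ using locality (multiplying by the appropriate $(z-w_i)^{N_i}$ to clear singularities), then reduces to the action of $\phi(z)$ on ${\bf 1}$, which is prescribed. I would then invoke Dong's Lemma: if $A(z), B(z), C(z)$ are pairwise local fields, then $:A(z)B(z):$ is local with $C(z)$. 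Iterating, every normally ordered product formed from the $a^s(z)$ and their derivatives is a field, mutually local with each generator. Combining these two tools, the field associated to a vector by the proposed formula is characterized uniquely by its action on the vacuum, so the assignment $a \mapsto Y(a,z)$ is well defined on the spanning set.

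The vacuum axiom is built in, and the translation covariance $Y(Da,z) = \partial_z Y(a,z)$ follows from Goddard's Uniqueness since both sides apply to ${\bf 1}$ to produce $e^{zD} D a$; locality of arbitrary pairs $Y(a,z), Y(b,w)$ is the iterated consequence of Dong's Lemma. The main obstacle is Dong's Lemma itself: it rests on the formal delta-function identity $(z_1 - z_2)^N \delta\!\left(\frac{z_1 - z_3}{z_2}\right)$-type manipulations and a careful three-variable residue computation, and this is where essentially all the real technical work sits. The uniqueness theorem and the bookkeeping that glues everything together are routine once Dong's Lemma is in hand.
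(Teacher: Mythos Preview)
The paper does not prove this theorem; it is quoted verbatim from \cite{FB} and used as a black box. Your outline is the standard argument (Goddard uniqueness plus Dong's Lemma) that appears in \cite{FB}, so there is nothing to compare against and your sketch is fine as a summary of the cited proof.
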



Here, 
to apply the theorem to $W$, 
we take ${\bf 1} = {\bf v}_0 \in W(0)$, 
$D = L_{-1} + \bar L_{-1}$, 
and 
$$
\{a^s\}_{s \in S} = \{ L_{-2} {\bf v}_0, \bar L_{-2} {\bf v}_0, L_{-1}^k \bar L_{-1}^{k'} {\bf v}_1\}_{k, k' \in \mathbb N}. 
$$ 
The fields associated to $L_{-2} {\bf v}_0$, $\bar L_{-2} {\bf v}_0$
are 
$$
L(z)=\sum_{n \in \mathbb Z} L_n z^{-n-2} 
\qquad \text{and} \qquad 
\bar L(z) = \sum_{n \in \mathbb Z} \bar L_n z^{-n-2}. 
$$
Fields associated to $L_{-1}^k \bar L_{-1}^{k'} {\bf v}_1$ and in general, 
elements of $W(1)$, 
are given in (3.13)-(3.14) with the structure constants $X_{1, n}^{n-1}$ we've chosen (3.23). 
The locality among them are established in Section 3. 
To see (4) holds, note that 
$$
{\bf v}_2 = {\bf v}_1(-1) {\bf v}_1, 
$$
and in general 
$$
L_{-1}^k {\bf v}_2 = \sum_{i = 0}^k {k \choose i} (L_{-1}^i{\bf v}_1)(-1) (L_{-1}^{k-i} {\bf v}_1), 
\quad k \geq 0. 
$$
Furthermore, 
we have 
$$
{\bf v}_3 = {\bf v}_1 (-1) {\bf v}_2, \qquad
L_{-1}^k {\bf v}_3 = \sum_{i = 0}^k {k \choose i} (L_{-1}^i{\bf v}_1)(-1) (L_{-1}^{k-i} {\bf v}_2), 
\quad \text{etc.} 
$$ 
So $W$ is spanned by vectors obtained by applying the operators 
$$
L_{-i}, \hspace{2mm} \bar L_{-i}, \quad i \geq 2,  
\quad \quad 
(L_{-1}^k \bar L_{-1}^{k'} {\bf v}_1) (-1)
$$
to the vacuum ${\bf v}_0$. 
By Theorem \ref{reconstruction}, 
we have the following 

\begin{corollary}
The space
$$
W = \bigoplus_{n \geq 0} L(\triangle(n), c) \otimes L(\bar \triangle(n), \bar c)
$$
is endowed with the structure of a vertex algebra. 
It is in fact a vertex operator algebra of rank $26$ where the Virasoro element is given by 
$L_{-2} {\bf v}_0 + \bar L_{-2} {\bf v}_0$. 
\end{corollary}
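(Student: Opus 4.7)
The approach is to apply the Strong Reconstruction Theorem \ref{reconstruction} directly with the generating set $\{{\bf v}_0, L_{-2}{\bf v}_0, \bar L_{-2}{\bf v}_0\} \cup \{L_{-1}^k \bar L_{-1}^{k'} {\bf v}_1\}_{k, k' \in \mathbb N}$, vacuum ${\bf 1} = {\bf v}_0$, and translation operator $D = L_{-1} + \bar L_{-1}$. The fields attached to $L_{-2}{\bf v}_0$ and $\bar L_{-2}{\bf v}_0$ are $L(z)$ and $\bar L(z)$, and those attached to $L_{-1}^k \bar L_{-1}^{k'} {\bf v}_1$ are the fields $Y(L_{-1}^k \bar L_{-1}^{k'} {\bf v}_1, z)$ obtained by differentiating the fields (3.13)--(3.14). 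Once the four hypotheses of the theorem are verified, the corollary will follow, after a final identification of the Virasoro element and computation of the central charge.

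For hypothesis (1), the creation property $a^s(z){\bf v}_0 = a^s + z(\ldots)$ for $a^s = L_{-2}{\bf v}_0, \bar L_{-2}{\bf v}_0$ is immediate from $L_n {\bf v}_0 = \bar L_n {\bf v}_0 = 0$ for $n \geq -1$. For $a^s = L_{-1}^k \bar L_{-1}^{k'} {\bf v}_1$, only the $\Phi^+ \otimes \Psi^+$ piece of (3.13) contributes when acting on ${\bf v}_0$, and the total power of $z$ collapses since $\triangle(1) - \triangle(0) - \triangle(1) = \bar\triangle(1) - \bar\triangle(0) - \bar\triangle(1) = 0$; differentiation then produces $L_{-1}^k \bar L_{-1}^{k'} {\bf v}_1$ as leading coefficient. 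Hypothesis (2) is standard: $[L_{-1}, L(z)] = \partial_z L(z)$, $[\bar L_{-1}, \bar L(z)] = \partial_z \bar L(z)$, the two Virasoro actions commute, and for the $W(1)$ fields $[L_{-1} + \bar L_{-1}, Y(v \otimes v', z)] = \partial_z Y(v\otimes v', z)$ follows from the $L_{-1}$-derivative property of intertwining operators applied to each tensor factor. Hypothesis (3), mutual locality, reduces to three sub-cases: $(L(z), L(w))$ and $(\bar L(z), \bar L(w))$ are Virasoro fields; $L(z)$ and $\bar L(w)$ commute since $\text{Vir}_c$ and $\text{Vir}_{\bar c}$ act on different tensor factors; locality of $L(z)$ or $\bar L(z)$ with the $W(1)$ fields follows from the intertwining-operator commutation relations used in the proof of Proposition \ref{BPZ}; and mutual locality among all $Y(L_{-1}^k \bar L_{-1}^{k'} {\bf v}_1, z)$ is precisely the first half of Proposition \ref{extra}, which was the deep input established through Steps 1--4.

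Hypothesis (4), the spanning condition, is the point that requires the most care. The key claim is ${\bf v}_1(-1) {\bf v}_{n-1} = {\bf v}_n$ for $n \geq 1$. This follows from (3.13) upon extracting the constant term in $z$: the $\Phi^+ \otimes \Psi^+$ summand produces $v_{\triangle(n), c} \otimes v_{\bar\triangle(n), \bar c} = {\bf v}_n$ at leading order (since the exponent $\triangle(n) - \triangle(n-1) - \triangle(1)$ in $z$ is exactly cancelled by its bar-counterpart), while the $\Phi^- \otimes \Psi^-$ summand maps into $W(n-2)$ and contributes nothing to the ${\bf v}_n$-component. Iterating gives ${\bf v}_n = {\bf v}_1(-1)\cdots{\bf v}_1(-1){\bf v}_0$, and more generally $L_{-1}^k \bar L_{-1}^{k'}{\bf v}_n$ is obtained by expanding $(L_{-1}^i \bar L_{-1}^{i'}{\bf v}_1)(-1)(L_{-1}^{k-i}\bar L_{-1}^{k'-i'}{\bf v}_{n-1})$ and using induction on $n$. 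Applying modes $L_{-i} = (L_{-2}{\bf v}_0)(-i+1)$ and $\bar L_{-i} = (\bar L_{-2}{\bf v}_0)(-i+1)$ for $i \geq 2$ to these highest-weight-type vectors yields the full Verma-level structure and hence all of $L(\triangle(n), c) \otimes L(\bar\triangle(n), \bar c)$.

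With hypotheses (1)--(4) verified, Theorem \ref{reconstruction} endows $W$ with a vertex algebra structure. To upgrade to a VOA of rank $26$, set $\omega := L_{-2}{\bf v}_0 + \bar L_{-2}{\bf v}_0$, so that $Y(\omega, z) = L(z) + \bar L(z)$; since the two summands mutually commute, the modes of $Y(\omega, z)$ satisfy the Virasoro bracket with central charge $c + \bar c = 26$. The operator $L_0 + \bar L_0$ supplies the conformal grading on $W$, with lowest weight $\triangle(n) + \bar\triangle(n) = -n$ on $W(n)$. The main obstacle is, as indicated, the careful bookkeeping in the spanning step: ensuring that the leading coefficient of ${\bf v}_1(-1){\bf v}_{n-1}$ is indeed ${\bf v}_n$ rather than $0$, which depends on the specific cancellation of the $z$-exponents between the two Virasoro factors — a cancellation that ultimately motivated the choice (1.1) of central charges and the structure of $W$ as a direct sum of balanced tensor products.
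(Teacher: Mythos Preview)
Your proof is correct and follows essentially the same route as the paper: apply the Strong Reconstruction Theorem with vacuum ${\bf v}_0$, translation $D = L_{-1} + \bar L_{-1}$, and generating fields $L(z)$, $\bar L(z)$, $Y(L_{-1}^k \bar L_{-1}^{k'}{\bf v}_1, z)$; invoke Proposition~\ref{extra} for the locality among the $W(1)$ fields; and establish the spanning condition via ${\bf v}_1(-1){\bf v}_{n-1} = {\bf v}_n$ together with the action of $L_{-i}, \bar L_{-i}$ for $i \geq 2$. Your verification of hypotheses (1)--(3) is spelled out in more detail than the paper gives, while your spanning argument is slightly less explicit (the paper records the binomial identity $L_{-1}^k {\bf v}_{n} = \sum_i \binom{k}{i} (L_{-1}^i {\bf v}_1)(-1)(L_{-1}^{k-i}{\bf v}_{n-1})$), but the content is the same.
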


Note that the $\mathbb Z$-grading on $W$ is determined by 
$\text{deg  }  {\bf v}_n = -n$
and the fact that $L_{-i}, \bar L_{-i}$ have degree $i$. 
In particular, the grading is not bounded from below. 

Recall that the vertex operators associated to elements of $W(1)$ 
are defined by pairing the intertwining operators together using the constants
$$
X_{1, n}^{n+1} = 1, \qquad 
X_{1, n}^{n-1} = \frac{n}{n+1} \frac{1}{(n+1)^2 -\varkappa^2}, \quad n \geq 0. 
$$
In this section, 
we will derive analogous formulas 
for the vertex operators 
associated to any element of $W$.
We do so by induction. 

Suppose the vertex operators for elements of $W(\lambda)$, $\lambda \geq 1$, 
are given by the linear combination of tensor products of intertwining operators
with the appropriate structure constants 
$X_{\lambda, \mu}^\nu$, 
$\mu, \nu \geq 0$, 
we will show that the same is true for elements of $W(\lambda+1)$. 
Meanwhile, 
we will derive a recursion for the set of constants 
$X_{\lambda, \mu}^\nu$, 
and the explicit formula. 

Define 
\begin{eqnarray}
\Phi^{-+} (z, w) 
& = & 
 (v_{\triangle(\nu), c}^*, \Phi_{1, \nu+1}^\nu (v_{\triangle(1), c}, z) 
\Phi_{\lambda, \mu}^{\nu+1} (v_{\triangle(\lambda), c}, w) v_{\triangle(\mu), c}), 
\\
\Phi^{+-} (z, w) 
& = & 
(v_{\triangle(\nu), c}^*, \Phi_{1, \nu-1}^\nu (v_{\triangle(1), c}, z) 
\Phi_{\lambda, \mu}^{\nu-1} (v_{\triangle(\lambda), c}, w) v_{\triangle(\mu), c})
\end{eqnarray}
here 
$\Phi_{\lambda, \mu}^\nu(\cdot, z)$ is the intertwining operator of type 
${L(\triangle(\nu), c) \choose L(\triangle(\lambda), c) \, L(\triangle(\mu), c) }$, 
whenever it exists, 
such that 
$$
\Phi_{\lambda, \mu}^\nu(v, z) = z^{\triangle(\nu) -\triangle(\lambda)-\triangle(\mu)} 
\sum_{n \in \mathbb Z} v(n) z^{-n-1}
$$
with 
$v_{\triangle(\lambda), c}(-1) v_{\triangle(\mu), c} = v_{\triangle(\nu), c}$. 

\begin{lemma}
The $\Phi^{\mp \pm} (z, w) $ satisfy the following equation
\begin{eqnarray}
\partial_z^2 \Phi = \frac{1}{\varkappa} \left(
\frac{w}{z(z-w)} \partial_w - \frac{1}{z} \partial_z + \frac{\triangle(\lambda)}{(z-w)^2} + \frac{\triangle(\mu)}{z^2} \right) \Phi
\end{eqnarray}
\end{lemma}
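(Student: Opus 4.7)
The plan is to essentially replicate the derivation of Proposition \ref{BPZ} verbatim, exploiting the fact that the first intertwining operator in both $\Phi^{-+}$ and $\Phi^{+-}$ still inserts the primary $v_{\triangle(1),c}$, which carries the null-vector $(L_{-1}^2 - \varkappa^{-1} L_{-2}) v_{\triangle(1),c}$. Consequently Lemma 3.1 still applies and gives
\[
\partial_z^2 \Phi_{1,\nu\pm 1}^{\nu}(v_{\triangle(1),c}, z)
= \frac{1}{\varkappa}\, :\! L(z)\, \Phi_{1,\nu\pm 1}^{\nu}(v_{\triangle(1),c}, z)\!: .
\]
The only thing that changes is the conformal weight at $w$ (now $\triangle(\lambda)$) and the weight of the source vector (now $\triangle(\mu)$). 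So the strategy is: take $\partial_z^2$ inside the matrix coefficient, replace it by the normally ordered product, split $L(z) = L^-(z) + L^+(z)$, and follow the same chain of manipulations.

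First I would decompose $:\!L(z)\Phi_{1,\nu\pm 1}^{\nu}\!: = L^-(z)\Phi_{1,\nu\pm 1}^{\nu} + \Phi_{1,\nu\pm 1}^{\nu} L^+(z)$ as in Proposition \ref{BPZ}. The $L^-(z)$ term annihilates the bra $v_{\triangle(\nu),c}^*$ from the left for the same reason as before (the components $L_n$ with $n\leq -2$ produce vectors orthogonal to $v_{\triangle(\nu),c}$ in the appropriate pairing), so it drops out. For the $L^+(z)$ term I would commute $L^+(z)$ past $\Phi_{\lambda,\mu}^{\nu\pm 1}(v_{\triangle(\lambda),c},w)$ using the general primary commutator
\[
[L^+(z), \Phi_{\lambda,\mu}^{\nu\pm 1}(v_{\triangle(\lambda),c}, w)]
= \frac{\partial_w \Phi_{\lambda,\mu}^{\nu\pm 1}(v_{\triangle(\lambda),c}, w)}{z-w}
+ \frac{\triangle(\lambda)\, \Phi_{\lambda,\mu}^{\nu\pm 1}(v_{\triangle(\lambda),c}, w)}{(z-w)^2},
\]
which is valid for any primary vector of weight $\triangle(\lambda)$ (here $v_{\triangle(\lambda),c}$) by the standard Ward identity and is proved identically to the $\triangle(1)$ case in Proposition \ref{BPZ}. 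This produces the $\triangle(\lambda)/(z-w)^2$ and $\partial_w/(z-w)$ terms in the final equation.

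Finally, $L^+(z)$ acts on $v_{\triangle(\mu),c}$: only the components $L_{-1} z^{-1}$ and $L_0 z^{-2}$ survive, contributing $(-\partial_z - \partial_w)/z$ via the translation identity $L_{-1}$ applied to the whole correlator (using $[L_{-1},\Phi(\cdot,z)] = \partial_z \Phi(\cdot,z)$ on both factors) and $\triangle(\mu)/z^2$. Assembling everything yields exactly (3.49). Since this is a near mechanical transcription of the proof of Proposition \ref{BPZ}, there is no serious obstacle; the only point requiring a moment's care is verifying that the normalization conventions for the $\Phi_{\lambda,\mu}^\nu$ (their leading exponent being $\triangle(\nu) - \triangle(\lambda) - \triangle(\mu)$) do not introduce spurious terms, but this is automatic since the null-vector-based argument uses only formal properties of intertwining operators.
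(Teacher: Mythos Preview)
Your proposal is correct and takes essentially the same approach as the paper, which simply states that the proof is identical to that of equation (3.1) (Proposition~\ref{BPZ}) with $\lambda=1$ being the special case. You have spelled out in detail precisely the substitutions the paper leaves implicit: $\triangle(1)\to\triangle(\lambda)$ at the $w$-insertion and $\triangle(n)\to\triangle(\mu)$ on the source vector.
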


\begin{proof}
The proof is the same as (3.1), which is a special case with $\lambda =1$. 
\end{proof}

The equation (4.20) comes from the singular vector 
of weight $(\triangle(1)+2, c)$ in $M(\triangle(1), c)$. 
Even though an explicit formula for the nontrivial singular vector of $M(\triangle(\lambda), c)$ 
is known
(see Lemma \ref{sing}), 
we will not derive the corresponding differential equation here, 
which would generalize (3.2) and have $\partial_w^{\lambda +1} \Phi$ on one side. 

\begin{lemma}
We have 
\begin{eqnarray}
\Phi^{-+} (z, w) & = & 
z^{\alpha_1} w^{\beta_1} (1-t)^\gamma 
{}_2F_1( a, b, c; t), 
\\
\Phi^{+-} (z, w) & = & 
z^{\alpha_2} w^{\beta_2} (1-t)^\gamma 
{}_2 F_1 (a-c+1, b-c+1, 2-c; t)
\\
& = & 
z^{\alpha_1} w^{\beta_1} (1-t)^\gamma \, \, t^{1-c}
{}_2 F_1 (a-c+1, b-c+1, 2-c; t), 
\nonumber
\end{eqnarray}
where the constants $\alpha_{1, 2}$, $\beta_{1, 2}$, 
$\gamma$, $a$, $b$, $c$ are given as follows:
\begin{eqnarray}
\alpha_1 & = &  \triangle(\nu) - \triangle(\nu+1) - \triangle(1) = 1 - \frac{\nu+3}{2\varkappa} \\
\beta_1 & = &  \triangle(\nu+1) - \triangle(\lambda) - \triangle(\mu) \\
\alpha_2 & = & 
\triangle(\nu) - \triangle(\nu-1) - \triangle(1) = \frac{\nu-1}{2 \varkappa}
\\
\beta_2 & = &  \triangle(\nu-1) - \triangle(\lambda) - \triangle(\mu). 
\\
\gamma & = & \frac{\lambda}{2\varkappa} \\
a & = & \frac{\lambda - \mu + \nu + 1}{2 \varkappa}, \\
b & = & \frac{ \lambda + \mu +  \nu +3 }{2 \varkappa} -1, \\
c & = & \frac{ \nu+1}{\varkappa}. 
\end{eqnarray}
\end{lemma}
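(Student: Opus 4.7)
The plan is to mimic the derivation carried out in Section 3 for the special case $\lambda=1$, now applied to the singular-vector BPZ equation (4.20) for general $\lambda$. Both $\Phi^{-+}$ and $\Phi^{+-}$ satisfy (4.20) in the domain $|z|>|w|>0$, and each is uniquely pinned down by the leading exponent of its expansion together with the normalization $v_{\triangle(\lambda),c}(-1)v_{\triangle(\mu),c}=v_{\triangle(\nu),c}$ inherited from the normalized intertwining operators.

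First I would substitute the ansatz $\Phi(z,w)=z^\alpha w^\beta(1-t)^\gamma g(t)$ with $t=w/z$ into (4.20), using the two choices $(\alpha,\beta)=(\alpha_1,\beta_1)$ and $(\alpha_2,\beta_2)$ that correspond to the two indicial roots at $t=0$. The values $\alpha_i=\triangle(\nu)-\triangle(\nu\pm 1)-\triangle(1)$ and $\beta_i=\triangle(\nu\pm 1)-\triangle(\lambda)-\triangle(\mu)$ are forced by the conformal weights of the outer and inner intertwining operators, and their explicit forms in terms of $\nu,\varkappa$ follow by unpacking the definition of $\triangle$. The prefactor exponent $\gamma=\lambda/(2\varkappa)=\triangle(\lambda+1)-\triangle(1)-\triangle(\lambda)$ is chosen to strip off the leading $(z-w)$-singularity arising in the OPE of $\Phi^{\pm}(v_{\triangle(1),c},z)$ with $\Phi_{\lambda,\mu}^{\nu\pm 1}(v_{\triangle(\lambda),c},w)$ into the highest-weight $\triangle(\lambda+1)$ channel.

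After these substitutions a direct calculation should reduce (4.20) to the Gauss hypergeometric equation
\[
t(1-t)g''(t)+[c-(a+b+1)t]g'(t)-ab\,g(t)=0.
\]
I would then identify the three parameters $a,b,c$ by reading off the residues of the rational coefficients at $t=0,1,\infty$, using Lemma \ref{bb1} and elementary identities between $\triangle$ at neighboring integers to put them in the advertised form. The two linearly independent solutions at $t=0$ are ${}_2F_1(a,b,c;t)$ and $t^{1-c}{}_2F_1(a-c+1,b-c+1,2-c;t)$; these match the indicial roots $0$ and $1-c$ respectively. Matching the $(\alpha_1,\beta_1)$ branch to the regular solution gives (4.21), while the $(\alpha_2,\beta_2)$ branch — which differs from $(\alpha_1,\beta_1)$ precisely by the shift corresponding to the factor $t^{1-c}$ — gives (4.22), with the overall coefficient $1$ fixed by the normalization of $\Phi_{\lambda,\mu}^{\nu\pm 1}$.

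The main obstacle will be the bookkeeping in reducing (4.20) to hypergeometric form: the mixed-derivative term $\tfrac{w}{z(z-w)}\partial_w$ must be converted to $\partial_t$ while absorbing the $z^\alpha w^\beta(1-t)^\gamma$ prefactor, and this produces several partial cancellations that must be tracked carefully in order to land on exactly the $(a,b,c)$ stated. As a sanity check, specializing $\lambda=1$ should recover equations (3.7)--(3.12) and the matrix coefficients listed in Proposition \ref{matrixcoefficients}; since $a,b,c$ are determined rigidly by three scalar residue conditions, any arithmetic slip in the reduction will be caught at that verification.
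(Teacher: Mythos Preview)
Your proposal is correct and follows essentially the same route as the paper: the paper's proof also consists of writing $\Phi^{\mp\pm}(z,w)=z^{\alpha_i}w^{\beta_i}\phi^{\mp\pm}(t)$ with $\phi^{\mp\pm}(0)=1$, substituting $\Phi=z^\alpha w^\beta(1-t)^\gamma\phi(t)$ into the BPZ equation (4.3), and identifying the resulting hypergeometric equation. Your write-up is more detailed than the paper's (which simply says ``we obtain hypergeometric equations; the solutions then follow''), and your added remarks on the OPE interpretation of $\gamma$ and the $\lambda=1$ sanity check are helpful but not logically required.
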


\begin{proof}
Note that 
\begin{eqnarray}
\Phi^{-+} (z, w) & = & z^{\alpha_1} w^{\beta_1} \phi^{-+}(t) 
\nonumber \\
\Phi^{+-} (z, w) & =  & z^{\alpha_2} w^{\beta_2} \phi^{+-} (t)
\nonumber
\end{eqnarray}
where $t = w/z$, $\phi^{\mp \pm} (t) \in \mathbb C[[t]]$ with $\phi^{\mp \pm} (0) =1$. 
When we plug 
$$\Phi = z^\alpha w^\beta (1-t)^\gamma \phi(t)$$
into the equation (4.3), 
where 
$(\alpha, \beta) = (\alpha_1, \beta_1)$ 
or $(\alpha_2, \beta_2)$, 
we obtain hypergeometric equations. 
The solutions then follow. 
\end{proof}

Furthermore, we have
\begin{eqnarray}
\Psi^{-+} & = & 
z^{2- \alpha_1} w^{\lambda + \mu- \nu -1 - \beta_1} (1-t)^{- \gamma} 
{}_2F_1( -a, -2 - b, - c; t), 
\\
\Psi^{+-} & = & 
z^{ - \alpha_2} w^{ \lambda + \mu - \nu +1 - \beta_2} (1-t)^{- \gamma} 
{}_2 F_1 ( - a + c + 1, - b + c -1, 2 + c; t), 
\end{eqnarray}
where $\Psi^{\mp \pm}$ are the corresponding correlation functions 
of the intertwining operators for the other copy of the Virasoro action.

Suppose all the intertwining operators in (4.18)-(4.19) exist for some $\lambda, \mu, \nu$,  
then we have
\begin{eqnarray}
& & ({\bf v}_\nu^*, Y({\bf v}_1, z) Y({\bf v}_\lambda, w) {\bf v}_\mu) 
\nonumber \\
& = & 
X_{1, \nu+1}^\nu X_{\lambda, \mu}^{\nu+1} \Phi^{-+} \Psi^{-+}
+ X_{\lambda, \mu}^{\nu-1} \Phi^{+-} \Psi^{+-}
\nonumber \\
& = & 
X_{1, \nu+1}^\nu X_{\lambda, \mu}^{\nu+1} 
z^2 w^{\lambda + \mu - \nu -1} 
{}_2F_1(a, b, c; t) {}_2 F_1(-a, -b-2, -c; t) 
\nonumber \\
& +  & 
X_{\lambda, \mu}^{\nu-1} 
w^{\lambda + \mu - \nu +1}
{}_2F_1(a-c+1, b-c+1, 2-c; t) 
{}_2F_1(-a+c+1, -b+c-1, 2+c; t). 
\nonumber 
\end{eqnarray}

Since $Y( {\bf v}_1, z)$ and $Y({\bf v}_{\lambda}, w)$ commute with each other, 
the above power series of $t =w/z$ terminates.
If we were to guess, 
we would expect that 
\begin{eqnarray}
 \frac{ X_{\lambda, \mu}^{\nu-1} }
{ X_{1, \nu+1}^\nu X_{\lambda, \mu}^{\nu+1} } 
& = & 
\frac{a b(b+1)(b+2)(a-c)} {c^2 (c+1)(c-1)(c-b-1)}
\\
& = & 
- \frac{ 
\frac{ \lambda - \mu + \nu +1 }{2} 
\frac{\lambda - \mu - \nu -1}{2}
\frac{\lambda + \mu + \nu +3}{2}}
{ (\nu+1)^2 \frac{\lambda + \mu - \nu +1}{2} }
\frac{ 
\left( \frac{\lambda + \mu + \nu + 3}{2} \right) ^2 - \varkappa^2 }
{ (\nu+1)^2 - \varkappa^2 }, 
\nonumber
\end{eqnarray}
and 
\begin{eqnarray}
& &  ({\bf v}_\nu^*, Y({\bf v}_1, z) Y({\bf v}_\lambda, w) {\bf v}_\mu) \\
 & = & 
X_{1, \nu+1}^\nu X_{\lambda, \mu}^{\nu+1} 
z^2 w^{\lambda + \mu - \nu -1} 
\left( 1 - \frac{2a}{c} t + \frac{a(a-b-1)}{c(c-b-1)} t^2 \right)
\nonumber \\
& = & 
X_{1, \nu+1}^\nu X_{\lambda, \mu}^{\nu+1} 
z^2 w^{\lambda + \mu - \nu -1} 
\left(
1 - \frac{\lambda -\mu + \nu +1}{\nu+1} t
+ \frac{(\mu+1)(\lambda -\mu + \nu +1)}{(\nu+1)(\lambda+ \mu - \nu +1)} 
t^2
\right) 
\nonumber
\end{eqnarray}
(see Lemma \ref{hypergeom1}). 
Since ${\bf v}_{\lambda+1} = {\bf v}_1 (-1) {\bf v}_{\lambda}$, 
and $Y( {\bf v}_1, z)$, 
$Y({\bf v}_{\lambda}, w)$ 
commute with each other, 
we have
\begin{eqnarray}
Y({\bf v}_{\lambda+1}, w) & = & [Y( {\bf v}_1, z) Y({\bf v}_{\lambda}, w)]|_{z=w}, 
\nonumber
\end{eqnarray}
hence,
by (4.17), 
\begin{eqnarray}
({\bf v}_\nu^*,  Y({\bf v}_{\lambda+1}, w) {\bf v}_\mu) 
& = &
- \frac{  ( \lambda+1) (\lambda - \mu - \nu -1) }{ (\nu+1) (\lambda +\mu - \nu +1) } 
X_{1, \nu+1}^\nu X_{\lambda, \mu}^{\nu+1} 
w^{\lambda + \mu - \nu + 1}.  
\nonumber 
\end{eqnarray}
If the vertex operators for elements of $W(\lambda+1)$ 
were indeed made up of intertwining operators with structure constants 
$X_{\lambda+1, \mu}^\nu$, 
we would again guess the following recursion: 
\begin{eqnarray}
X_{\lambda +1, \mu}^\nu 
& = & 
- \frac{  ( \lambda+1) (\lambda - \mu - \nu -1) }{ (\nu+1) (\lambda +\mu - \nu +1) } 
X_{1, \nu+1}^\nu X_{\lambda, \mu}^{\nu+1}. 
\end{eqnarray}
The values (3.23) of $X_{1, \nu+1}^\nu$ for $\nu \geq 0$
and the recursion determine the following set of constants:

\begin{define} \label{structureconstants}
For $\lambda, \mu, \nu \geq 0$ such that 
$$
\lambda + \mu + \nu  \equiv   0 \,\, (\text{  mod  } 2) 
$$
and 
$$
0 \leq \ell = \frac{ \lambda + \mu - \nu} {2} \leq \lambda,
\qquad 
\text{i.e.     } \quad \nu \leq \lambda + \mu \quad  \text{and} \quad \mu \leq \lambda + \nu, 
$$
define
\begin{eqnarray}
X_{\lambda, \mu}^{\nu} 
& = &
\frac{ {\lambda \choose \ell} { \mu \choose \ell} }{ {  \frac{\lambda + \mu + \nu +2}{2} \choose \ell} }
\frac{1}{ [ (\nu+2)^2 - \varkappa^2 ]  [ (\nu+3)^2 - \varkappa^2 ] \ldots 
 [ ( \frac{\lambda + \mu + \nu +2}{2} )^2 - \varkappa^2 ] }; 
\end{eqnarray}
otherwise $X_{\lambda, \mu}^\nu = 0$. 
\end{define}

Note that if $\mu < \ell$, 
i.e. $\nu < \lambda - \mu$ (which particularly implies that $\lambda > \mu$), 
then $X_{\lambda, \mu}^\nu = 0$.
Therefore, 
$X_{\lambda, \mu}^\nu = 0$ unless 
$\lambda + \mu + \nu  \equiv   0 \,\, (\text{  mod  } 2)$
and 
$\lambda \leq \mu + \nu$, 
$\mu \leq \lambda + \nu$, 
$\nu \leq \lambda + \mu$. 
It is straightforward to verify that
(4.19) satisfies (4.16). 

The induction hypothesis can now be restated as follows:
suppose the vertex operators 
$Y(v \otimes v', z)$ for $v \otimes v' \in W(\lambda)$
are given by 
$$
Y(v \otimes v', z) u \otimes u' = 
\sum_{\nu \geq 0} X_{\lambda, \mu}^\nu
\Phi_{\lambda, \mu}^\nu(v, z) u 
\otimes
\Psi_{\lambda, \mu}^\nu (v', z) u'
$$
where $u \otimes u' \in W(\mu)$, 
$X_{\lambda, \mu}^\nu$ are the constants defined in (4.19), 
and 
$\Phi_{\lambda, \mu}^\nu(\cdot, z)$, 
$\Psi_{\lambda, \mu}^\nu (\cdot, z)$
are normalized intertwining operators for the two Virasoro actions. 
To prove it also holds for $W(\lambda+1)$, 
our strategy is to show that 
the vertex operators for elements of  $W(\lambda+1)$, 
which result from the Reconstruction Theorem, 
and the proposed formula
using intertwining operators and the structure constants
$X_{\lambda+1, \mu}^\nu$, $\mu, \nu \geq 0$, 
have the same matrix coefficients on the level of 
highest weight vectors, and 
the same commutator relations with the Virasoro actions, 
therefore they must be the same. 
We will do this in steps.

Denote
\begin{eqnarray}
\phi^\lambda(v_m^*, v_n, k, k', l, l'; z, w) 
& = &
(v_m^*, Y(L_{-1}^k \bar L_{-1}^{k'} {\bf v}_1, z)
Y(L_{-1}^l \bar L_{-1}^{l'} {\bf v}_\lambda, w)  v_n)
\end{eqnarray}
(see (3.28)). 

\medskip 
\noindent
{\bf Step 1:} 
The goal of step 1 
is to compute
$[(z-w)^{k+k'+l+l'} \sum_{i=0}^N
\sum_{j=0}^M
 {N \choose i} 
 {M \choose j}
\phi^\lambda({\bf v}_\nu^*, {\bf v}_\mu, k+i, k'+j, l+N-i, l'+M-j; z, w)]|_{z=w}$. 

\begin{lemma} \label{onefivethree}
Suppose 
$\Phi^{-+}$ and $\Phi^{+-}$ 
both exist for some 
$\lambda, \mu, \nu \geq 0$
(see (4.1)-(4.2)).
Then 
\begin{eqnarray}
\phi^\lambda({\bf v}_\nu^*, {\bf v}_\mu, 0, 0, l, 0; z, w) & \in & A_l
\end{eqnarray}
and 
\begin{eqnarray}
[ (z-w)^l \phi^\lambda({\bf v}_\nu^*, {\bf v}_\mu, 0, 0, l, 0; z, w) ]|_{z=w}
& = & 
X_{\lambda+1, \mu}^\nu
(-1)^l {\gamma \choose l} l!
w^{\lambda+ \mu -\nu},
\end{eqnarray}
where $\gamma = \frac{\lambda}{2 \varkappa}$. 
\end{lemma}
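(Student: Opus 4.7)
The approach is to use the induction hypothesis to expand $Y(L_{-1}^l {\bf v}_\lambda, w)$ as a combination of intertwining operators, apply the formula (3.13) for $Y({\bf v}_1, z)$, and evaluate the resulting matrix coefficient using the explicit hypergeometric expressions (4.4)--(4.14) together with Lemma \ref{hypergeom1}.

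By the induction hypothesis, $Y({\bf v}_\lambda, w)\, u \otimes u' = \sum_{\nu'} X_{\lambda, \mu}^{\nu'} \Phi_{\lambda,\mu}^{\nu'}(v_{\triangle(\lambda),c}, w)\, u \otimes \Psi_{\lambda,\mu}^{\nu'}(v_{\bar\triangle(\lambda),\bar c}, w)\, u'$, and the intertwining-operator identity $\Phi(L_{-1} v, w) = \partial_w \Phi(v, w)$ shows that inserting $L_{-1}^l$ on the $c$-side simply replaces $\Phi_{\lambda,\mu}^{\nu'}$ by $\partial_w^l \Phi_{\lambda, \mu}^{\nu'}$. After pairing with ${\bf v}_\nu^*$ and applying $Y({\bf v}_1, z)$ via (3.13) (which sends $W(\nu')$ into $W(\nu' \pm 1)$), only $\nu' = \nu \pm 1$ survive, and I arrive at
\[
\phi^\lambda({\bf v}_\nu^*, {\bf v}_\mu, 0,0,l,0; z,w) = X_{1,\nu+1}^\nu X_{\lambda,\mu}^{\nu+1}\, (\partial_w^l \Phi^{-+})\, \Psi^{-+} + X_{\lambda,\mu}^{\nu-1}\, (\partial_w^l \Phi^{+-})\, \Psi^{+-}.
\]

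For the first assertion $\phi^\lambda \in A_l$, I invoke the VOA structure on $W$ established in the preceding corollary: $Y({\bf v}_1, z)$ and $Y(L_{-1}^l {\bf v}_\lambda, w)$ are mutually local, and the singular part of their OPE is $\sum_{n \geq 0} Y({\bf v}_1(n) L_{-1}^l {\bf v}_\lambda, w)(z-w)^{-n-1}$. A weight count shows ${\bf v}_1(n) L_{-1}^l {\bf v}_\lambda \in W(\lambda \pm 1)$ has VOA weight $l - \lambda - n - 2$, which is strictly below the minimum weight $-(\lambda+1)$ of those modules for $n \geq l$; hence the pole at $z = w$ has order at most $l$, and $\phi^\lambda \in A_l$.

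To compute $[(z-w)^l \phi^\lambda]|_{z=w}$, I rewrite $(1-t)^{\pm\gamma} = z^{\mp\gamma}(z-w)^{\pm\gamma}$ in the explicit hypergeometric formulas for $\Phi^{\mp\pm}$ and $\Psi^{\mp\pm}$, so that the fractional $(z-w)$-behaviour lies entirely in the factors $(z-w)^{\pm\gamma}$ that cancel in each product $\Phi\Psi$. Among the $l+1$ Leibniz terms contributing to $\partial_w^l \Phi^{\mp\pm}$, only the one in which all $l$ derivatives hit $(z-w)^\gamma$---namely $(-1)^l l! {\gamma \choose l}(z-w)^{\gamma-l}$ times the regular remainder---produces the maximal $(z-w)^{-l}$ singularity after combining with $(z-w)^{-\gamma}$ from $\Psi^{\mp\pm}$; the remaining Leibniz terms yield at worst $(z-w)^{-l+1}$ and hence vanish after multiplication by $(z-w)^l$ and restriction to $z=w$. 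Thus
\[
[(z-w)^l \phi^\lambda]\bigm|_{z=w} = (-1)^l\, l!\, {\gamma \choose l} \cdot \bigl[\, X_{1,\nu+1}^\nu X_{\lambda,\mu}^{\nu+1}\, \Phi^{-+}\Psi^{-+} + X_{\lambda,\mu}^{\nu-1}\, \Phi^{+-}\Psi^{+-}\,\bigr]\bigm|_{z=w},
\]
and the bracketed quantity is exactly the $l=0$ matrix coefficient already computed in (4.17), which by Lemma \ref{hypergeom1} at $t=1$ together with the defining ratio (4.16) equals $X_{\lambda+1,\mu}^\nu$ times the appropriate power of $w$, yielding the claimed formula.

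The main obstacle is the bookkeeping at the final step: one must verify rigorously that the Leibniz terms with fewer than $l$ derivatives landing on $(z-w)^\gamma$ indeed contribute only subleading singularities and therefore do not survive the residue, and that the values ${}_2F_1(\,\cdot\,;\,1)$ of the four hypergeometric series combine via Lemma \ref{hypergeom1} to produce precisely $P(1)$, whose evaluation matches $X_{\lambda+1,\mu}^\nu/(X_{1,\nu+1}^\nu X_{\lambda,\mu}^{\nu+1})$ via the recursion (4.18).
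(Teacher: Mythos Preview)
Your argument for $\phi^\lambda \in A_l$ via locality in the already-established VOA together with a weight bound is correct, and it is cleaner than the paper's route, which instead rewrites the BPZ equation as a second-order recursion in $l$ and feeds in the base cases $l=0,1$.

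The Leibniz argument for the residue, however, has a genuine gap. After factoring $\Phi^{\mp\pm} = (z-w)^\gamma G^{\mp\pm}$ you treat $G^{\mp\pm}$ as a ``regular remainder'', so that only the Leibniz term with all $l$ derivatives on $(z-w)^\gamma$ produces the top pole. But $G^{-+} = z^{\alpha_1-\gamma}w^{\beta_1}\,{}_2F_1(a,b,c;t)$ is \emph{not} regular at $t=1$: the hypergeometric function has a branch point there with local exponent $c-a-b = 1-(\lambda+1)/\varkappa \notin \mathbb Z$, and the same is true of $G^{+-}$, $H^{-+}$, $H^{+-}$. Hence each sub-leading Leibniz term $(z-w)^{-k}(\partial_w^{l-k}G)H$ individually carries fractional singularities at $z=w$. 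What you actually need is that the particular combination $R_m := X_{1,\nu+1}^\nu X_{\lambda,\mu}^{\nu+1}(\partial_w^m G^{-+})H^{-+} + X_{\lambda,\mu}^{\nu-1}(\partial_w^m G^{+-})H^{+-}$ lies in $A_{m-1}$ for $1\le m\le l$. Inverting the Leibniz relation gives only $R_m = \sum_k\binom{m}{k}(\gamma)_k(z-w)^{-k}\phi^\lambda({\bf v}_\nu^*,{\bf v}_\mu,0,0,m-k,0;z,w)\in A_m$, and the extra order of vanishing turns out to be equivalent to the residue formula (4.22) itself, so the argument is circular. This is not mere bookkeeping: some independent input is required. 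The paper supplies it by recasting (4.3) as the second-order equation (4.23) in $\partial_w$, verifying (4.21)--(4.22) for $l=0$ and $l=1$ directly via Lemmas~\ref{hypergeom1} and~\ref{hypergeom2}, and then letting the three-term recursion coming from (4.23) propagate both statements to all $l$.
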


\begin{proof}
Both $\Phi^{-+}(z, w)$ and $\Phi^{+-}(z, w)$ 
are of the form 
$z^\Lambda h(t)$, 
where 
$\Lambda = \triangle(\nu) - \triangle(\lambda) - \triangle(\mu) - \triangle(1)$, 
and $h(t)$ is a function of $t = w/z$. 
The equation (4.3) that the $\Phi^{\mp \pm}$ satisfy 
can be rewritten as 
\begin{eqnarray}
\partial_w^2 \Phi 
& = & 
\left( 
\frac{1}{\varkappa} 
\frac{1}{z-w} 
+ \frac{2 (\Lambda -1 + \frac{1}{\varkappa}) }{w} 
\right) \partial_w \Phi
\\
& + & 
\left( 
\frac{1}{\varkappa}  \frac{ \triangle(\lambda) }{(z-w)^2} 
+ \frac{ 
\frac{1}{\varkappa} ( \triangle(\lambda) + \triangle( \mu) ) 
- \Lambda ( \Lambda -1 + \frac{1}{\varkappa} )
}{w^2} 
+ \frac{2 \triangle(\lambda) }{ \varkappa w (z-w) } 
\right) \Phi. 
\nonumber 
\end{eqnarray}
Note that 
\begin{eqnarray}
\phi^\lambda({\bf v}_\nu^*, {\bf v}_\mu, 0, 0, l, 0; z, w) 
& = & 
X_{1, \nu+1}^\nu X_{\lambda, \mu}^{\nu+1} 
(\partial_w^l \Phi^{-+}) \Psi^{-+} 
+ X_{\lambda, \mu}^{\nu -1} 
(\partial_w^l \Phi^{+-}) \Psi^{+-}. 
\end{eqnarray}
Using Lemma \ref{hypergeom1} and Lemma \ref{hypergeom2}, 
we can show that 
$\phi^\lambda({\bf v}_\nu^*, {\bf v}_\mu, 0, 0, 0, 0; z, w)$ 
and 
$\phi^\lambda({\bf v}_\nu^*, {\bf v}_\mu, 0, 0, 1, 0; z, w)$
satisfy (4.21)-(4.22). 
The equation (4.23) then implies that 
(4.21) is true for any $l$, 
moreover, 
it yields a recursion formula for $\tilde \varphi_{l+2}$ 
in terms of $\tilde \varphi_{l}$ and $\tilde \varphi_{l+1}$, 
where 
$\tilde \varphi_l =  [ (z-w)^l \phi^\lambda({\bf v}_\nu^*, {\bf v}_\mu, 0, 0, l, 0; z, w) ]|_{z=w}$. 
(4.22) is then verified using $\tilde \varphi_0$, 
$\tilde \varphi_1$, 
and the recursion. 
\end{proof}

\begin{lemma} \label{twelvethirty}
Under the assumptions of Lemma \ref{onefivethree}, 
we have
\begin{eqnarray}
\sum_{i=0}^N 
\sum_{j=0}^M
{N \choose i} 
{M \choose j}
\phi^\lambda({\bf v}_\nu^*, {\bf v}_\mu, k+i, k'+j, l+N-i, l'+M-j; z, w)
&  \in & A_{k+k'+l+l'}
\end{eqnarray}
and 
\begin{eqnarray}
& & 
[(z-w)^{k+k'+l+l'} 
\sum_{i=0}^N 
\sum_{j=0}^M
{N \choose i} 
{M \choose j}
\phi^\lambda({\bf v}_\nu^*, {\bf v}_\mu, k+i, k'+j, l+N-i, l'+M-j; z, w)]|_{z=w}
\\
& = &
X_{\lambda+1, \mu}^\nu 
(-1)^{l} 
{\gamma \choose k+l} (k+l)! 
{\triangle(\nu) - \triangle(\lambda+1) - \triangle(\mu) \choose N} N!
\nonumber \\
& & 
(-1)^{l'} 
{-\gamma \choose k'+l'} (k'+l')! 
{\bar \triangle(\nu) - \bar \triangle(\lambda+1) - \bar \triangle(\mu) \choose M} M!
w^{\lambda+1 + \mu - \nu -N -M}. 
\nonumber 
\end{eqnarray}
\end{lemma}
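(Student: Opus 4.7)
The plan is to factor the matrix coefficient along the two fusion channels $\rho=\nu\pm 1$ through which the product $Y({\bf v}_1,z)Y({\bf v}_\lambda,w){\bf v}_\mu$ passes before projecting onto $W(\nu)$, then apply Lemma \ref{bino} to recast the binomial sums and use the hypergeometric identity of Lemma \ref{hypergeom1} to evaluate the residue at $z=w$. Concretely,
\[
\phi^\lambda({\bf v}_\nu^*,{\bf v}_\mu,k+i,k'+j,l+N-i,l'+M-j;z,w)=\sum_{\epsilon\in\{-+,+-\}}c_\epsilon\,(\partial_z^{k+i}\partial_w^{l+N-i}\Phi^\epsilon)(\partial_z^{k'+j}\partial_w^{l'+M-j}\Psi^\epsilon),
\]
with $c_{-+}=X_{1,\nu+1}^{\nu}X_{\lambda,\mu}^{\nu+1}$ and $c_{+-}=X_{\lambda,\mu}^{\nu-1}$. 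Since $i$ only appears in the $\Phi$-derivative exponents and $j$ only in the $\Psi$-derivative exponents, the identity $\sum_i\binom{N}{i}\partial_z^i\partial_w^{N-i}=(\partial_z+\partial_w)^N$ lets us factor the double binomial sum channel by channel as $\sum_\epsilon c_\epsilon\Xi^\epsilon\Theta^\epsilon$, where $\Xi^\epsilon=\partial_z^k\partial_w^l(\partial_z+\partial_w)^N\Phi^\epsilon$ and $\Theta^\epsilon=\partial_z^{k'}\partial_w^{l'}(\partial_z+\partial_w)^M\Psi^\epsilon$.

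Next, I would write $\Phi^\epsilon=z^\Lambda h^\epsilon(t)$ and $\Psi^\epsilon=z^{\bar\Lambda}\bar h^\epsilon(t)$ with $t=w/z$ (both $\epsilon$-channels share the common scaling $\Lambda=\triangle(\nu)-\triangle(\lambda)-\triangle(\mu)-\triangle(1)$ and its $\bar c$-analogue) and apply Lemma \ref{bino}. This expresses $(\partial_z+\partial_w)^N\Phi^\epsilon$ as $z^{\Lambda-N}$ times an explicit polynomial combination involving Pochhammer symbols $(\Lambda-N+1)_p$ and $(1-t)^{N-p}\partial_t^{N-p}h^\epsilon$, and analogously for $\Psi^\epsilon$. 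The functions $h^\epsilon$ carry a $(1-t)^\gamma$ factor and $\bar h^\epsilon$ a $(1-t)^{-\gamma}$ factor; a Leibniz expansion of $\partial_t^{N-p}$ and $\partial_t^{M-p'}$ isolates pieces of the form $(1-t)^{\gamma-q}$ and $(1-t)^{-\gamma-q'}$. In the product $\Xi^\epsilon\Theta^\epsilon$ these fractional powers cancel channel by channel, leaving only integer powers of $(1-t)$. Since $(z-w)^{k+k'+l+l'}=z^{k+k'+l+l'}(1-t)^{k+k'+l+l'}$, this bounds the pole at $z=w$ by $k+k'+l+l'$, proving the first claim, and only terms whose residual $(1-t)$-power is exactly $-(k+k'+l+l')$ survive the evaluation at $z=w$.

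For the explicit value, the ratio $c_{+-}/c_{-+}=X_{\lambda,\mu}^{\nu-1}/(X_{1,\nu+1}^{\nu}X_{\lambda,\mu}^{\nu+1})$ is exactly the one dictated by (4.16), so Lemma \ref{hypergeom1} ensures that the two-channel sum collapses and produces the prefactor $X_{\lambda+1,\mu}^\nu$. The Pochhammer factors $(\Lambda-N+1)_p$, combined with the Leibniz contributions involving $\gamma$ (and exploiting $\Lambda=\triangle(\nu)-\triangle(\lambda+1)-\triangle(\mu)+\gamma$), assemble into $\binom{\triangle(\nu)-\triangle(\lambda+1)-\triangle(\mu)}{N}N!$; the $\bar c$-side analogously yields $\binom{\bar\triangle(\nu)-\bar\triangle(\lambda+1)-\bar\triangle(\mu)}{M}M!$. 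The Leibniz terms in which every remaining $(1-t)^{\pm\gamma}$-derivative is consumed produce the falling factorials $(-1)^l\binom{\gamma}{k+l}(k+l)!$ and $(-1)^{l'}\binom{-\gamma}{k'+l'}(k'+l')!$; the residual $z$- and $w$-scaling yields $w^{\lambda+1+\mu-\nu-N-M}$. Lemma \ref{onefivethree} supplies the anchor case $k=k'=l'=N=M=0$. The main obstacle is the combinatorial bookkeeping required to coordinate four independent Leibniz expansions so that the $(1-t)^{\pm\gamma}$ cancellations, the hypergeometric channel merger, and the Pochhammer-to-binomial resummation all cohere; once this organization is in place, the factorization of the answer between the $c$- and $\bar c$-parts is transparent, since the two sides are decoupled from the outset.
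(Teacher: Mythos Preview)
Your channel-by-channel factorization $\sum_\epsilon c_\epsilon\,\Xi^\epsilon\Theta^\epsilon$ with $\Xi^\epsilon=\partial_z^k\partial_w^l(\partial_z+\partial_w)^N\Phi^\epsilon$ and $\Theta^\epsilon=\partial_z^{k'}\partial_w^{l'}(\partial_z+\partial_w)^M\Psi^\epsilon$ is correct, and applying Lemma~\ref{bino} separately to each factor is a good move. The gap is in the step ``these fractional powers cancel channel by channel, leaving only integer powers of $(1-t)$ \ldots\ proving the first claim.'' The cancellation $(1-t)^{\gamma}(1-t)^{-\gamma}=1$ only removes the \emph{explicit} prefactor; what remains in each channel is still a product of hypergeometric derivatives $\partial_t^{\,r}g^\epsilon\cdot\partial_t^{\,s}\bar g^\epsilon$, and these are infinite power series in $t$ with genuine branch singularities at $t=1$ (the local exponents there are $0$ and $c-a-b\notin\mathbb{Z}$). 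So a single channel $\Xi^\epsilon\Theta^\epsilon$ is \emph{not} in any $A_n$, and ``bounding the pole at $z=w$'' does not establish membership in $A_{k+k'+l+l'}$. Only after summing the two channels with the correct ratio do the non-polynomial pieces cancel, and that cancellation is precisely the content of Lemma~\ref{hypergeom1} (or its packaged form, Lemma~\ref{onefivethree}); you invoke it only for the residue, not for rationality.

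The paper sidesteps this by never separating the channels. It first reduces to $k'=l'=M=0$ using the translation operator $D=L_{-1}+\bar L_{-1}$ (writing $\bar L_{-1}=D-L_{-1}$ turns $\bar c$-side derivatives into total derivatives plus extra $L_{-1}$'s). In that case $\Theta^\epsilon=\Psi^\epsilon$ is underived, and Lemma~\ref{onefivethree} already gives the \emph{channel-summed} input $\phi^\lambda({\bf v}_\nu^*,{\bf v}_\mu,0,0,l,0;z,w)\in A_l$ together with its value at $z=w$; Lemma~\ref{bino} then expresses the general $(k,l,N)$-sum as a finite $A$-linear combination of these known rational functions, exactly as in Corollary~\ref{blah}. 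Your symmetric treatment of the $c$- and $\bar c$-sides is attractive, but to make it rigorous you would need a two-sided analogue of Lemma~\ref{onefivethree}, namely that $\sum_\epsilon c_\epsilon\,(\partial_w^l\Phi^\epsilon)(\partial_w^{l'}\Psi^\epsilon)\in A_{l+l'}$ with the expected residue, which is not established here.
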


\begin{proof}
The case where $k'=l'=M=0$ 
is proved in the same manner as Corollary \ref{blah}, 
using Lemma \ref{bino} and Lemma \ref{onefivethree}.
To include all values of $k', l', M$, 
we use $D = L_{-1} + \bar L_{-1}$ to transform
the terms into the previous form.
See step 3 in the proof of Proposition \ref{extra}. 
The details are omitted.
\end{proof}

\begin{lemma} \label{onetwelve}
Lemma \ref{twelvethirty} is in fact true for any $\lambda, \mu, \nu \geq 0$. 
\end{lemma}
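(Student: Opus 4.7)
The plan is to extend Lemma \ref{twelvethirty} to cover the configurations excluded by its hypothesis, namely those for which one or both of $\Phi_{\lambda,\mu}^{\nu+1}$, $\Phi_{\lambda,\mu}^{\nu-1}$ fail to exist. By the fusion rule proposition these missing configurations split into three classes: (i) the parity condition fails, so $\nu \not\equiv \lambda+\mu+1\,(\text{mod } 2)$; (ii) exactly one of $\Phi^{-+}$, $\Phi^{+-}$ survives, as happens at the range boundaries $\nu = \lambda+\mu+1$ (only $\Phi^{+-}$) or $\nu = |\lambda-\mu|-1$ (only $\Phi^{-+}$); (iii) both are absent, which occurs outside the range $[|\lambda-\mu|-1,\,\lambda+\mu+1]$.

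In case (i), the induction hypothesis on $Y({\bf v}_\lambda, w){\bf v}_\mu$ produces no component in $W(\nu \pm 1)$, so $\phi^\lambda({\bf v}_\nu^*, {\bf v}_\mu, k, k', l, l'; z, w)$ vanishes identically, and simultaneously (4.19) forces $X_{\lambda+1,\mu}^\nu = 0$ on parity grounds; hence (4.25) and (4.26) hold trivially with both sides zero. Case (iii) is analogous: the matrix coefficient vanishes and the range condition in (4.19) again forces $X_{\lambda+1,\mu}^\nu = 0$.

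The substantive case is (ii). Here the induction hypothesis collapses the matrix coefficient to a single summand, either $X_{1,\nu+1}^\nu X_{\lambda,\mu}^{\nu+1}(\partial_z^k \partial_w^l \Phi^{-+})\Psi^{-+}$ or $X_{\lambda,\mu}^{\nu-1}(\partial_z^k \partial_w^l \Phi^{+-})\Psi^{+-}$. At these boundary parameter values one verifies that one of the hypergeometric parameter relations ($a = c$ in the first subcase, $b - c + 1 = 0$ in the second) forces the surviving series to collapse to a closed form, so that $\Phi^{\mp\pm}\Psi^{\mp\pm}$ reduces to a monomial in $w$ times a power of $(1 - t)$; in particular (4.25) holds at once. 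The BPZ equation (4.3) still governs the surviving term, so the recursion of Lemma \ref{onefivethree} for $\tilde\varphi_\ell = [(z-w)^\ell\phi^\lambda|_{z=w}]$ carries over, with the hypergeometric identities of Lemmas \ref{hypergeom1}--\ref{hypergeom3} replaced by the trivial identity satisfied by a single terminating series. To obtain (4.26) I would apply Lemma \ref{bino} to the $(1-t)^{\pm\gamma}$ prefactors and then read off the leading coefficient in $w$, identifying the overall constant with $X_{\lambda+1,\mu}^\nu$ as given by the closed form (4.19).

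The main obstacle is this final identification. The recursion (4.18) used in the non-degenerate case to determine $X_{\lambda+1,\mu}^\nu$ from $X_{\lambda,\mu}^{\nu\pm 1}$ becomes a $0/0$ indeterminate form at the boundaries of case (ii); one must instead extract $X_{\lambda+1,\mu}^\nu$ directly from the single surviving contribution and verify by direct calculation that it agrees with the explicit product in (4.19). Since the product $\prod_{j=2}^{\ell+1}\left[(\nu+j)^2 - \varkappa^2\right]$ and the binomial coefficients in (4.19) both reshape at the boundary (several factors can vanish or combine with the reduced hypergeometric powers), a careful case-by-case bookkeeping is required; this is the technically most involved step of the argument.
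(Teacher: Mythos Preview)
Your case analysis is essentially the paper's own, and the mechanism you identify---the surviving hypergeometric factor collapsing to a closed form at the boundaries---is exactly what the paper means by ``there is no hypergeometric series involved.''  Two points to sharpen:

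First, the paper splits your case (ii) into two: $\nu=\mu-\lambda-1$ with $\mu>\lambda$, and $\nu=\lambda-\mu-1$ with $\lambda>\mu$.  These behave differently.  In the second of these, $a=c$, so ${}_2F_1(a,b,c;t)=(1-t)^{-b}$ and likewise for the dual factor; the product $\Phi^{-+}\Psi^{-+}$ picks up an \emph{extra} factor of $(1-t)^2$, hence is proportional to $(z-w)^2 w^{2\mu}$.  The combinations in (4.26) therefore vanish identically after multiplying by $(z-w)^{k+k'+l+l'}$ and setting $z=w$, which matches $X_{\lambda+1,\mu}^{\lambda-\mu-1}=0$.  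This is not the same as the $\mu>\lambda$ subcase, where $a=0$ gives ${}_2F_1=1$ and the computation truly parallels Lemma~\ref{onefivethree} with a nonzero right-hand side.  (Your stated relations ``$a=c$ in the first subcase, $b-c+1=0$ in the second'' are swapped or misattributed: for $\nu=\lambda+\mu+1$ one has $b-c+1=0$; for $\nu=\mu-\lambda-1$ one has $a=0$; the relation $a=c$ belongs to the third case $\nu=\lambda-\mu-1$.)

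Second, your anticipated ``main obstacle'' is not one.  You never need to pass through the recursion (4.18) at the boundaries; you simply compute the single surviving term directly and compare with the closed formula (4.19).  At $\nu=\lambda+\mu+1$ both sides equal $1$; at $\nu=\mu-\lambda-1$ the constant in front of the single term is $X_{1,\nu+1}^\nu X_{\lambda,\mu}^{\nu+1}$, and one checks by hand from (4.19) that this equals $X_{\lambda+1,\mu}^\nu$ (the would-be indeterminate factor in (4.18) is absorbed by the trivial hypergeometric evaluation).  The paper accordingly calls these cases \emph{easier} than Lemma~\ref{twelvethirty}, not harder.
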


\begin{proof}
There are three extreme cases to take care of. 
One is $\nu = \lambda + \mu + 1$, 
the second is
$\nu = \mu- \lambda -1$, $\mu > \lambda$, 
and finally 
$\nu= \lambda - \mu -1$, $\lambda > \mu$.
They can all be proved in the same way as Lemma \ref{twelvethirty}, 
in fact the proofs are easier because
there is no hypergeometric series involved. 
The case where $\nu= \lambda - \mu -1$, $\lambda > \mu$
has all the matrix coefficients and combinations 
(4.25)-(4.26) vanish, 
which corresponds to the fact that 
$X_{\lambda+1, \mu}^{\lambda - \mu -1} = 0$.
\end{proof}

In {\bf Step 2}, we prove the following

\begin{lemma}
For $k, l, k', l', N, M \geq 0$, 
we have
\begin{eqnarray}
\sum_{i =0}^N \sum_{j=0}^M 
{N \choose i} {M \choose j}
Y(L_{-1}^{k+i} \bar L_{-1}^{k' +j} {\bf v}_1, z)
Y(L_{-1}^{l+N-i} \bar L_{-1}^{l' + M-j} {\bf v}_\lambda, w)
& \in & 
A_{k+k'+l+l'}, 
\end{eqnarray}
and 
\begin{eqnarray}
& & [(z-w)^{k+k'+l+l'} 
\sum_{i =0}^N \sum_{j=0}^M 
{N \choose i} {M \choose j}
Y(L_{-1}^{k+i} \bar L_{-1}^{k' +j} {\bf v}_1, z)
Y(L_{-1}^{l+N-i} \bar L_{-1}^{l' + M-j} {\bf v}_\lambda, w)
]|_{z=w}
\\
& = & 
(-1)^{l+l'} 
{\gamma \choose k+l} (k+l)! 
{-\gamma \choose k'+l'} (k' +l')!
[\sum_{i =0}^N \sum_{j=0}^M 
{N \choose i} {M \choose j}
Y(L_{-1}^{i} \bar L_{-1}^{j} {\bf v}_1, z)
Y(L_{-1}^{N-i} \bar L_{-1}^{ M-j} {\bf v}_\lambda, w)
]|_{z=w}. 
\nonumber 
\end{eqnarray}
Everything is understood on the level of matrix coefficients. 
\end{lemma}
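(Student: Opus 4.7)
\medskip

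The plan is to reduce to Lemma \ref{onetwelve} by the same two-sided induction used in Step 4 of the proof of Proposition \ref{extra}. The base case—matrix coefficients between highest-weight vectors $({\bf v}_\nu^*, {\bf v}_\mu)$ for all $\nu, \mu \geq 0$—is handled directly by Lemma \ref{onetwelve}: a direct comparison shows that its right-hand side in (4.26) factors as the claimed
$$(-1)^{l+l'}\binom{\gamma}{k+l}(k+l)!\binom{-\gamma}{k'+l'}(k'+l')!$$
times the $k=l=k'=l'=0$ evaluation of the same lemma. This is exactly (4.30) specialized to highest-weight vectors, so the base case of both (4.29) and (4.30) is already in place.

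For the inductive step, suppose the statement holds for a pair $(v_m^*, v_n)$; I would show it for $(L_{-p} v_m^*, v_n)$, $p>0$, and separately for the analogous extensions via $\bar L_{-p}$ and via the right-hand entry $v_n$. Using the commutator relation
$$[L_p, Y(L_{-1}^k {\bf v}_1, z)] = \sum_{q \geq 0} z^{p+1-q}\binom{p+1}{q} Y(L_{q-1} L_{-1}^k {\bf v}_1, z)$$
together with its obvious analogue for $Y(L_{-1}^l {\bf v}_\lambda, w)$, and the extension of Lemma \ref{l1},
$$L_q (L_{-1}^l {\bf v}_\lambda) = q!\left[\binom{l}{q-1}\triangle(\lambda) + \binom{l}{q}\right] L_{-1}^{l-q+1} {\bf v}_\lambda,$$
which follows by the same direct computation, I would move $L_p$ across both fields. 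Each resulting term has a strictly smaller value of $k+l$ (accompanied by a $z^{p+1-q}$ or $w^{p+1-q}$ factor and by one fewer $L_{-1}$ on ${\bf v}_1$ or ${\bf v}_\lambda$), so the induction hypothesis applies to each summand. A combinatorial rearrangement using Lemma \ref{l2} regroups the resulting triple sums into precisely the double sum demanded by (4.29)-(4.30).

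The main technical obstacle lies in controlling the limit $(z-w)^{k+k'+l+l'}|_{z=w}$ after the commutators have been applied: the "child" matrix coefficients produced by the commutator live in $A_r$ for various $r \leq k+k'+l+l'$, and only those with $r=k+k'+l+l'$ contribute a nonzero limit. One must check that terms with $q \geq 2$ in the commutator sum yield children of strictly smaller pole order at $z=w$, so their contribution to the limit vanishes, while the $q=0, 1$ terms reassemble—via Lemma \ref{l2}—into exactly the combinatorial prefactor $(-1)^{l+l'}\binom{\gamma}{k+l}(k+l)!\binom{-\gamma}{k'+l'}(k'+l')!$ appearing on the right-hand side of (4.30). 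Once the $L_{-p}$ case is in hand, the $\bar L_{-p}$ case is identical by symmetry of the two Virasoro actions, and the same induction applied to the right-hand entry $v_n$ (using the intertwining property with the generator acting from the right) completes the proof for all matrix coefficients.
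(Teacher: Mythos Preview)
Your approach is essentially the paper's: Lemma \ref{onetwelve} supplies the base case for highest-weight pairs $({\bf v}_\nu^*,{\bf v}_\mu)$, and the induction via the commutator (3.46), Lemma \ref{l1} (extended to ${\bf v}_\lambda$ with $\triangle(\lambda)$ in place of $\triangle(1)$), and Lemma \ref{l2} is exactly ``the same kind as implemented in the proof of Proposition \ref{extra}.'' One remark: your sentence ``each resulting term has a strictly smaller value of $k+l$ \ldots and one fewer $L_{-1}$'' is not accurate (the $q=0$ commutator term raises the $L_{-1}$ power by one), and it is not the reason the induction hypothesis applies---the induction is on the pair $(v_m^*,v_n)$, and after commuting $L_p$ through, every summand is a $\phi^\lambda(v_m^*,v_n,\ldots)$ with the already-established pair, for which the statement is assumed for \emph{all} $k,l,k',l',N,M$.
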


\begin{proof}
Lemma \ref{onetwelve} implies that this is true for 
matrix coefficients determined by the highest weight vectors
$({\bf v}_\nu^*, {\bf v }_\mu)$. 
The general statement is proved by induction, 
the same kind as implemented in the proof of Proposition \ref{extra}. 
\end{proof}

Since ${\bf v}_{\lambda+1} = {\bf v}_1(-1) {\bf v}_\lambda$ 
and the fields $Y({\bf v}_1, z)$, 
$Y({\bf v}_\lambda, w)$ commute, 
the Reconstruction Theorem \ref{reconstruction} implies that 
$$
Y({\bf v}_{\lambda+1}, w) = Y({\bf v}_1, z) Y({\bf v}_\lambda, w)|_{z = w}. 
$$
In general, 
since 
$$
L_{-1}^N \bar L_{-1}^M {\bf v}_{\lambda+1} =
\sum_{i=0}^N \sum_{j=0}^M 
{N \choose i} {M \choose j}
(L_{-1}^i \bar L_{-1}^j {\bf v}_1 ) (-1)
(L_{-1}^{N-i} \bar L_{-1}^{M-j} {\bf v}_\lambda),
$$
Theorem \ref{reconstruction} implies that 
\begin{eqnarray}
Y(L_{-1}^N \bar L_{-1}^M {\bf v}_{\lambda+1}, w)
& = & 
\sum_{i=0}^N \sum_{j=0}^M
{N \choose i} {M \choose j}
:Y(L_{-1}^i \bar L_{-1}^j {\bf v}_1, w) Y( L_{-1}^{N-i} \bar L_{-1}^{M-j} {\bf v}_\lambda, w):. 
\end{eqnarray}
Without the normal ordering, 
$$
\sum_{i=0}^N \sum_{j=0}^M
{N \choose i} {M \choose j}
Y(L_{-1}^i \bar L_{-1}^j {\bf v}_1, z) Y( L_{-1}^{N-i} \bar L_{-1}^{M-j} {\bf v}_\lambda, w) 
$$
is shown to be regular
(this is exactly what (4.27) says when $k=k'=l=l'=0$), 
hence
we rewrite (4.29) as
\begin{eqnarray}
Y(L_{-1}^N \bar L_{-1}^M {\bf v}_{\lambda+1}, w)
& = & 
[\sum_{i=0}^N \sum_{j=0}^M
{N \choose i} {M \choose j}
Y(L_{-1}^i \bar L_{-1}^j {\bf v}_1, z) Y( L_{-1}^{N-i} \bar L_{-1}^{M-j} {\bf v}_\lambda, w)]|_{z=w}. 
\end{eqnarray}

\begin{corollary} \label{ha}
For any $N, M \geq 0$, $\mu, \nu \geq 0$, 
we have 
\begin{eqnarray}
& & ({\bf v}_\nu^*, Y(L_{-1}^N \bar L_{-1}^M {\bf v}_{\lambda+1}, w) {\bf v}_\mu)
\\
& = &
X_{\lambda+1, \mu}^\nu
{\triangle(\nu) - \triangle(\lambda+1) - \triangle(\mu) \choose N} N!
{\bar \triangle(\nu) - \bar \triangle(\lambda+1) - \bar \triangle(\mu) \choose M} M!
w^{\lambda+1 + \mu - \nu -N - M}. 
\nonumber 
\end{eqnarray}
\end{corollary}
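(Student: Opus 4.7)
The plan is to read the corollary as a direct consequence of formula (4.30) together with Lemma \ref{twelvethirty} (extended to all $\lambda,\mu,\nu$ via Lemma \ref{onetwelve}), specialized to the case $k=k'=l=l'=0$. The work has in effect already been done by the preceding two steps; what remains is to line up the notation and observe that the boundary combinatorial factors collapse.

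First I would pair the matrix-coefficient $({\bf v}_\nu^*, \cdot\,{\bf v}_\mu)$ against equation (4.30):
\begin{eqnarray}
({\bf v}_\nu^*, Y(L_{-1}^N\bar L_{-1}^M {\bf v}_{\lambda+1},w){\bf v}_\mu)
&=& \Big[\sum_{i=0}^N\sum_{j=0}^M \binom{N}{i}\binom{M}{j}
\phi^\lambda({\bf v}_\nu^*,{\bf v}_\mu,i,j,N-i,M-j;z,w)\Big]\Big|_{z=w}. \nonumber
\end{eqnarray}
The left-hand side is manifestly a power series in $w$ (it is the matrix coefficient of a single vertex operator), so in particular the bracketed combination on the right is regular at $z=w$, a fact that also follows from (4.27) with $k=k'=l=l'=0$, so the evaluation $z=w$ is legitimate.

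Second I would apply Lemma \ref{twelvethirty} (upgraded to arbitrary $\lambda,\mu,\nu\geq 0$ by Lemma \ref{onetwelve}) to the bracketed sum, with $k=k'=l=l'=0$. Since $(z-w)^{k+k'+l+l'}=1$, the formula (4.26) gives exactly
\begin{eqnarray}
&& X_{\lambda+1,\mu}^\nu\, (-1)^0\tbinom{\gamma}{0}0!\,\tbinom{\triangle(\nu)-\triangle(\lambda+1)-\triangle(\mu)}{N}N! \nonumber \\
&& \qquad \times\, (-1)^0\tbinom{-\gamma}{0}0!\,\tbinom{\bar\triangle(\nu)-\bar\triangle(\lambda+1)-\bar\triangle(\mu)}{M}M!\, w^{\lambda+1+\mu-\nu-N-M}, \nonumber
\end{eqnarray}
and the four zeroth binomial/factorial factors are each equal to $1$. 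Comparing this with the two displays above yields precisely the asserted identity (4.31).

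There is essentially no obstacle left at this stage: the hard analytic content, namely the explicit form of the specialization $z=w$ and the correct dependence on $N$ and $M$, is entirely encoded in Lemma \ref{twelvethirty} (whose proof relied on the hypergeometric identities of Lemmas \ref{hypergeom1} and \ref{hypergeom2} together with the differential equation (4.3)) and in its extension Lemma \ref{onetwelve}. The only thing to check in writing it down is the harmless bookkeeping that the case $X_{\lambda+1,\mu}^\nu=0$ (which occurs in the three ``extreme'' configurations appearing in Lemma \ref{onetwelve}, most notably $\nu=\lambda-\mu-1$ with $\lambda>\mu$) is consistent with the vanishing of the right-hand side of (4.31), and this is already addressed there.
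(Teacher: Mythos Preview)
Your proof is correct and follows exactly the paper's approach: the paper's own proof is the single line ``This follows from setting $k=k'=l=l'=0$ in (4.26),'' and you have simply unpacked that sentence by making explicit the use of (4.30) to identify the left-hand side and of Lemma~\ref{onetwelve} to cover all $\mu,\nu\ge 0$.
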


\begin{proof}
This follows from setting $k=k'=l=l'=0$ in (4.26). 
\end{proof}

\begin{corollary} \label{ti}
For any $p \in \mathbb Z$, $N, M \geq 0$, 
we have 
\begin{eqnarray}
[L_p, Y(L_{-1}^N \bar L_{-1}^M {\bf v}_{\lambda+1}, w)] 
& =  &
\sum_{q \geq 0} {p+1 \choose q} w^{p-q+1} 
Y(L_{q-1} L_{-1}^N \bar L_{-1}^M {\bf v}_{\lambda+1}, w), 
\end{eqnarray}
\begin{eqnarray}
[\bar L_p, Y(L_{-1}^N \bar L_{-1}^M {\bf v}_{\lambda+1}, w)] 
& =  &
\sum_{q \geq 0} {p+1 \choose q} w^{p-q+1} 
Y(\bar L_{q-1} L_{-1}^N \bar L_{-1}^M {\bf v}_{\lambda+1}, w). 
\end{eqnarray}
\end{corollary}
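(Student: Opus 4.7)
The plan is to derive both commutator relations as instances of the standard Virasoro commutator formula in the vertex algebra $W$, whose structure was just established by the Reconstruction Theorem in the preceding corollary. By construction the two Virasoro fields are
$$L(z) = Y(L_{-2}\mathbf{v}_0, z) = \sum_{n \in \mathbb{Z}} L_n z^{-n-2}, \qquad \bar L(z) = Y(\bar L_{-2}\mathbf{v}_0, z) = \sum_{n \in \mathbb{Z}} \bar L_n z^{-n-2},$$
so $L_p$ is the residue coefficient of $Y(L_{-2}\mathbf{v}_0, z)$ at $z^{p+1}$ (and similarly for $\bar L_p$). In any vertex algebra the Borcherds commutator identity gives
$$[Y(a,z), Y(b,w)] = \sum_{n \geq 0} \frac{1}{n!}\, Y(a_n b, w)\, \partial_w^n \delta(z-w),$$
and specializing to $a = L_{-2}\mathbf{v}_0$ (so that $a_n = L_{n-1}$) and $b = L_{-1}^N \bar L_{-1}^M \mathbf{v}_{\lambda+1}$, then extracting the coefficient of $z^{-p-2}$, yields exactly (4.32). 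Taking $a = \bar L_{-2}\mathbf{v}_0$ with $a_n = \bar L_{n-1}$ gives (4.33) by the identical argument.

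Should one prefer a direct inductive derivation in the spirit of the rest of Section 4, one can argue by induction on $\lambda$ using equation (4.30). The base case $\lambda = 0$ is formula (3.46) together with its $\bar L$-analogue, established from the intertwining-operator axioms and the definition (3.13)-(3.14). For the inductive step, apply $[L_p, -]$ to both sides of (4.30) and use the Leibniz rule
$$[L_p, AB]\big|_{z=w} = \bigl([L_p,A]B + A[L_p,B]\bigr)\big|_{z=w}.$$
Substitute (3.46) into $[L_p, A]$ and the inductive hypothesis into $[L_p, B]$, then regroup using $\mathbf{v}_{\lambda+1} = \mathbf{v}_1(-1)\mathbf{v}_\lambda$ together with the Leibniz property
$$L_{q-1}\bigl((L_{-1}^i \bar L_{-1}^j \mathbf{v}_1)(-1)(L_{-1}^{N-i}\bar L_{-1}^{M-j}\mathbf{v}_\lambda)\bigr) = \bigl(L_{q-1}(L_{-1}^i \bar L_{-1}^j \mathbf{v}_1)\bigr)(-1)(\cdots) + (L_{-1}^i\bar L_{-1}^j \mathbf{v}_1)(-1)\bigl(L_{q-1}(L_{-1}^{N-i}\bar L_{-1}^{M-j}\mathbf{v}_\lambda)\bigr),$$
and apply (4.30) once more to reassemble the right-hand side into $Y(L_{q-1}L_{-1}^N \bar L_{-1}^M \mathbf{v}_{\lambda+1}, w)$.

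The main obstacle along the inductive route is precisely this combinatorial repackaging: managing the three nested binomial sums (over $i$, $j$, and $q$) and verifying that the cross-terms arising from commuting $L_{q-1}$ past the mode $(-1)$ organize themselves into the single sum asserted on the right-hand side. The VOA-theoretic route above avoids this entirely, reducing the claim to a general identity valid in every vertex algebra once the VOA axioms have been verified on $W$. The argument for $\bar L_p$ is completely symmetric, using $\bar L(z) = Y(\bar L_{-2}\mathbf{v}_0, z)$ in place of $L(z)$.
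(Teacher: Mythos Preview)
Your primary route is correct and is genuinely different from, and cleaner than, what the paper does. The paper's proof invokes (4.30) (the expression of $Y(L_{-1}^N\bar L_{-1}^M\mathbf{v}_{\lambda+1},w)$ as the $z=w$ limit of the product of fields) together with special cases of (4.28), and says the resulting computation is ``rather tedious, therefore omitted''. In other words, the authors compute the commutator by hand, commuting $L_p$ past each factor of the product in (4.30) and then repackaging. Your observation that, once Corollary~4.2 has produced a bona fide vertex algebra structure on $W$, the Borcherds commutator identity applies to \emph{any} pair of fields and immediately yields (4.32) and (4.33), short-circuits all of that. Nothing is circular: the VOA structure is in place before this corollary, and the commutator formula is a consequence of locality and the translation axiom alone, both of which the Reconstruction Theorem guarantees.

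One caveat on your secondary (inductive) sketch: the ``Leibniz property'' you write for $L_{q-1}$ acting on $(a)_{-1}b$ is not literally correct in a vertex algebra---there are further terms from the noncommutative Wick formula, namely $\sum_{i\ge 1}\binom{q-1}{i}(L_{i-1}a)_{-1-i+q-1}b$-type corrections. You correctly flag the repackaging as the main obstacle, and since you do not rely on this route for the argument, it does no damage; but if you wanted to carry out the induction rigorously you would need the full iterate formula, which is essentially what the paper's appeal to (4.28) encodes. Your first route avoids this entirely, which is the point.
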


\begin{proof}
This follows from
(4.30) and 
(special cases of )
(4.28). 
The computations are rather tedious, 
therefore omitted.
\end{proof}

\noindent
{\bf Step 3. }
The vertex operators
$Y(L_{-1}^N \bar L_{-1}^M {\bf v}_{\lambda+1}, z)$
are determined by (4.31)-(4.33) completely. 
The same formulas would hold if we were to define 
$Y(L_{-1}^N \bar L_{-1}^M {\bf v}_{\lambda+1}, z)$
by the pairing of intertwining operators
with structure constants 
$X_{\lambda+1, \mu}^\nu$, 
hence
it is indeed true. 
Once this is established, 
it is not difficult to see that the vertex operator of 
any element of $W(\lambda+1)$ is obtained this way. 

\medskip

The induction part is now complete. 
We summarize the main theorem.

\begin{theorem}
Let $\varkappa \in \mathbb C \backslash \mathbb Q$
and set 
$c =  13 - 6 \varkappa - 6 \varkappa^{-1}$; 
$\bar c =  13 + 6 \varkappa + 6 \varkappa^{-1}$. 
Also define 
$\triangle(\lambda) = \frac{\lambda (\lambda+2)}{4\varkappa} - \frac{\lambda}{2}$ and 
$\bar \triangle(\lambda) = - \frac{\lambda (\lambda+2)}{4\varkappa} - \frac{\lambda}{2}$
for $\lambda \in \mathbb N$. 
Let $L(\triangle(\lambda), c)$ 
(resp. $L(\bar \triangle(\lambda), \bar c)$)
denote the irreducible highest weight representation 
of the Virasoro Lie algebra with highest weight 
$(\triangle(\lambda), c)$
(resp. $(\bar \triangle(\lambda), \bar c)$).  

Then 
the $\text{Vir}_c \oplus \text{Vir}_{\bar c}$-module 
\begin{eqnarray}
W = \bigoplus_{\lambda \in \mathbb N} L( \triangle(\lambda), c) \otimes L( \bar \triangle(\lambda), \bar c)
\nonumber
\end{eqnarray}
is a vertex operator algebra of rank $26$. 
Furthermore, 
the vertex operators are given as follows: 
set 
\begin{eqnarray}
W(\lambda) & = & L( \triangle(\lambda), c) \otimes L( \bar \triangle(\lambda), \bar c); 
\nonumber
\end{eqnarray}
then for any $v \otimes v' \in W(\lambda)$, $ u \otimes u' \in W(\mu)$, 
we have 
\begin{eqnarray}
Y(v \otimes v', z) u \otimes u' 
& = & 
\sum_{\nu \in \mathbb N} X_{\lambda, \mu}^\nu
\Phi_{\lambda, \mu}^\nu(v, z) u \otimes \Psi_{\lambda, \mu}^\nu (v', z) u' 
\nonumber 
\end{eqnarray}
where $\Phi_{\lambda, \mu}^\nu(\cdot, z)$ (resp. $ \Psi_{\lambda, \mu}^\nu (\cdot, z)$ ) 
is the intertwining operator of type 
${L(\triangle(\nu), c) \choose L(\triangle(\lambda), c) \, L(\triangle(\mu), c)}$
(resp. ${L(\bar \triangle(\nu), \bar c) \choose L(\bar \triangle(\lambda), \bar c) \, L(\bar \triangle(\mu), \bar c)}$)
such that 
\begin{eqnarray}
\Phi_{\lambda, \mu}^\nu(v, z) 
& = & 
z^{\triangle(\nu) -\triangle(\lambda) -\triangle(\mu)} \sum_{i \in \mathbb Z} v(n)z^{-n-1}
\nonumber 
\end{eqnarray}
\begin{eqnarray}
\text{(resp. }  \Psi_{\lambda, \mu}^\nu (v', z) 
& = & 
z^{\bar \triangle(\nu) -\bar \triangle(\lambda) -\bar \triangle(\mu)} \sum_{i \in \mathbb Z} v'(n)z^{-n-1})
\nonumber 
\end{eqnarray}
for any $v \in L(\triangle(\lambda), c)$ (resp. $v' \in L(\bar  \triangle(\lambda), \bar c)$) 
and 
$v_{\triangle(\lambda), c} (-1) v_{\triangle(\mu), c} = v_{\triangle(\nu), c}$
(resp. 
$v_{\bar \triangle(\lambda), \bar c} (-1) v_{\bar \triangle(\mu), \bar c} = v_{\bar \triangle(\nu), \bar c}$). 
The constants $X_{\lambda, \mu}^\nu$ are given in Definition \ref{structureconstants}. 
\end{theorem}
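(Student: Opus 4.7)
The plan is to apply the Strong Reconstruction Theorem (Theorem 4.1) to produce the vertex algebra structure, and then to identify the resulting vertex operators with the intertwining-operator formula by induction on $\lambda$. I take the generating set
\[
\{a^s\}_{s\in S}=\{L_{-2}{\bf v}_0,\ \bar L_{-2}{\bf v}_0,\ L_{-1}^k\bar L_{-1}^{k'}{\bf v}_1 : k,k'\in\mathbb N\},
\]
with vacuum ${\bf v}_0$ and translation operator $D=L_{-1}+\bar L_{-1}$. The associated fields are the two Virasoro currents $L(z),\bar L(z)$ and the fields $Y(L_{-1}^k\bar L_{-1}^{k'}{\bf v}_1,z)$ defined by (3.13)--(3.14) with structure constants (3.23). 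I first check the spanning hypothesis: the identity ${\bf v}_{\lambda+1}={\bf v}_1(-1){\bf v}_\lambda$ together with the standard fact that $L(\triangle(\lambda),c)\otimes L(\bar\triangle(\lambda),\bar c)$ is generated from ${\bf v}_\lambda$ by the negative Virasoro modes shows that $W$ is spanned by the required monomials. The vacuum and translation axioms are built into the definitions of intertwining operators. Mutual locality of $L(z)$ and $\bar L(z)$ is classical, their locality with the $W(1)$-fields follows from the intertwining-operator commutation formulas, and locality among the $W(1)$-fields themselves is exactly Proposition \ref{extra}. Thus Theorem \ref{reconstruction} produces a vertex algebra structure on $W$; since the Virasoro element $L_{-2}{\bf v}_0+\bar L_{-2}{\bf v}_0$ has central charge $c+\bar c=26$, this is a VOA of rank $26$.

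The explicit formula is proved by induction on $\lambda$. The base cases $\lambda=0$ (where $W(0)$ carries the tensor product of the two Virasoro vacuum VOAs) and $\lambda=1$ (the definitions (3.13)--(3.14) together with (3.23)) are by construction. For the inductive step, assume the formula holds on $W(\lambda)$ with the constants $X_{\lambda,\mu}^\nu$ of Definition \ref{structureconstants}. Using ${\bf v}_{\lambda+1}={\bf v}_1(-1){\bf v}_\lambda$ and the regularity established in Step 2 of Section 4, the Reconstruction Theorem gives
\[
Y(L_{-1}^N\bar L_{-1}^M{\bf v}_{\lambda+1},w)
=\Bigl[\sum_{i,j}\tbinom{N}{i}\tbinom{M}{j}Y(L_{-1}^i\bar L_{-1}^j{\bf v}_1,z)Y(L_{-1}^{N-i}\bar L_{-1}^{M-j}{\bf v}_\lambda,w)\Bigr]\Big|_{z=w}.
\]
Corollary \ref{ha} computes the matrix coefficients of the left-hand side between highest-weight vectors, yielding precisely the coefficients predicted by the intertwining-operator expression with the constants $X_{\lambda+1,\mu}^\nu$ produced by the recursion (4.18). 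Corollary \ref{ti} then shows that these fields satisfy the correct commutation relations with $L_p,\bar L_p$. Since a field on $W$ is uniquely determined by its action on the highest-weight vectors ${\bf v}_\mu$ together with its commutators with the two Virasoro actions, this forces
\[
Y(L_{-1}^N\bar L_{-1}^M{\bf v}_{\lambda+1},z)=\sum_{\nu}X_{\lambda+1,\mu}^{\nu}\,\Phi_{\lambda+1,\mu}^{\nu}\otimes\Psi_{\lambda+1,\mu}^{\nu}
\]
on each $W(\mu)$, and by linearity the formula extends to all of $W(\lambda+1)$. Finally, the recursion (4.18) with initial data (3.23) is solved in closed form by the expression of Definition \ref{structureconstants}; this is a direct verification that $X_{\lambda+1,\mu}^\nu$ given by (4.19) satisfies (4.18).

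The main obstacle is not the reconstruction step itself but rather the locality input Proposition \ref{extra}, already carried out in Section 3, which rests on the three nontrivial hypergeometric identities of Lemmas \ref{hypergeom1}, \ref{hypergeom2}, and \ref{hypergeom3} (proved in Section 5) and on the careful four-step bookkeeping involving derivatives of the matrix coefficients $\Phi^{\mp\pm}\Psi^{\mp\pm}$. The second delicate point is ensuring, in the induction step, that the Reconstruction Theorem's normal-ordered product agrees with the intertwining-operator ansatz not merely as a formal series but with the exact structure constants $X_{\lambda+1,\mu}^\nu$; this is the content of Lemmas \ref{onefivethree}--\ref{onetwelve}, where the differential equation (4.3) satisfied by $\Phi^{\mp\pm}$ is used to convert derivatives at $z=w$ into closed-form expressions in $w$, matching both sides. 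Once matrix coefficients agree on highest-weight vectors and the commutators with $L_p,\bar L_p$ agree, uniqueness of vertex operators closes the induction and hence the theorem.
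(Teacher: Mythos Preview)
Your proposal is correct and follows essentially the same approach as the paper: apply the Strong Reconstruction Theorem with the generating set $\{L_{-2}{\bf v}_0,\bar L_{-2}{\bf v}_0,L_{-1}^k\bar L_{-1}^{k'}{\bf v}_1\}$, using Proposition \ref{extra} for locality, and then identify the resulting vertex operators with the intertwining-operator formula by induction on $\lambda$ via Lemmas \ref{onefivethree}--\ref{onetwelve} and Corollaries \ref{ha}--\ref{ti}. One small remark: the phrase ``by linearity the formula extends to all of $W(\lambda+1)$'' undersells what is needed---the extension from the vectors $L_{-1}^N\bar L_{-1}^M{\bf v}_{\lambda+1}$ to arbitrary elements of $W(\lambda+1)$ uses that both the reconstructed vertex operators and the intertwining-operator ansatz satisfy the same relations $Y(L_{-p}v,z)=:\!\partial_z^{p-2}L(z)Y(v,z)\!:\,/(p-2)!$ (and the $\bar L$ analogue), but the paper itself dismisses this as ``not difficult to see,'' so your level of detail is appropriate.
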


\section{Hypergeometric identities}

\noindent {\bf \emph{Proof of Lemma \ref{hypergeom1}: }} 
The hypergeometric differential equation 
\begin{eqnarray}
t(1-t) F''(t) + [ c- (a+b+1) t ] F'(t) - ab F & = & 0
\end{eqnarray}
has three regular singularities at $0$, $1$, and $\infty$. 
Local solutions around regular singularities can be found by the Frobenius method. 
Suppose none of $c$, $c-a-b$, or $a-b$ is an integer, 
linearly independent solutions around $0$, $1$, and $\infty$ are given by 
\begin{eqnarray}
\Phi_0 & = & 
\left( \begin{array}{c}
{}_2F_1(a, b, c; t) \\
t^{1-c} {}_2F_1(a-c+1, b-c+1, 2-c; t)
\end{array}
\right) 
\nonumber \\
\Phi_1 & = & 
\left( \begin{array}{c}
{}_2F_1(a, b, a+b-c+1; 1-t) \\
(1-t)^{c-a-b} {}_2F_1(c-a, c-b, c-a-b+1; 1-t) 
\end{array}
\right) 
\nonumber \\
\Phi_\infty & = & 
\left( \begin{array}{c}
t^{-a} {}_2F_1(a, a-c+1, a-b+1; \frac{1}{t}) \\
t^{-b} {}_2F_1(b, b-c+1, b-a+1; \frac{1}{t})
\end{array}
\right). 
\nonumber
\end{eqnarray}
By analytic continuation, 
each set of solutions can be expressed as linear combinations of another set. 
That is $\Phi_0 = M_1 \Phi_1$ and $\Phi_0 = M_\infty \Phi_\infty$, 
where 
the connection matrices $M_1$ and $M_\infty$ are as follows: 
\begin{eqnarray}
M_1 & = & 
\left( \begin{array}{cc}
\frac{\Gamma (c) \Gamma (c-a-b)}{\Gamma (c-a) \Gamma (c-b)} &
\frac{\Gamma(c)\Gamma (a+b-c) }{\Gamma (a) \Gamma (b)}
\\
\frac{\Gamma(2-c) \Gamma(c-a-b)}{\Gamma(1-a) \Gamma(1-b)} &
\frac{\Gamma(2-c) \Gamma(a+b-c)}{\Gamma(a-c+1)\Gamma(b-c+1)}
\end{array}
\right)
\nonumber \\
M_\infty & = & 
\left( \begin{array}{cc}
e^{- i \pi a} 
\frac{\Gamma (c) \Gamma (b-a)}{\Gamma (c-a) \Gamma (b)} &
e^{- i \pi b}
\frac{\Gamma(c)\Gamma (a-b) }{\Gamma (c-b) \Gamma (a)}
\\
e^{-i \pi (a-c+1)} 
\frac{\Gamma(2-c) \Gamma(b-a)}{\Gamma(b-c+1) \Gamma(1-a)} &
e^{- i \pi (b-c+1)} 
\frac{\Gamma(2-c) \Gamma(a-b)}{\Gamma(a-c+1)\Gamma(1-b)}
\end{array}
\right)
\nonumber 
\end{eqnarray}
(see e.g. [S]). 
The dual equation 
\begin{eqnarray}
t(1-t) F''(t) + [ -c + (a+b+1) t ] F'(t) - a(b+2) F & = & 0
\nonumber 
\end{eqnarray}
has the following solutions 
\begin{eqnarray}
\Psi_0 & = & 
\left( \begin{array}{c}
{}_2F_1(-a, -b-2, -c; t) \\
t^{1+c} {}_2F_1(-a+c+1, -b+c-1, 2+c; t)
\end{array}
\right) 
\nonumber \\
\Psi_1 & = & 
\left( \begin{array}{c}
{}_2F_1(-a, -b-2, c-a-b-1; 1-t) \\
(1-t)^{a+b-c+2} {}_2F_1 (a-c, b-c+2, a+b-c+3; 1-t)
\end{array}
\right) 
\nonumber \\
\Psi_\infty & = & 
\left( \begin{array}{c}
t^a {}_2F_1(-a, -a+c+1, -a+b+3; \frac{1}{t}) \\
t^{b+2} {}_2F_1(-b-2, -b+c-1, a-b-1; \frac{1}{t})
\end{array}
\right). 
\nonumber
\end{eqnarray}
The connection matrices relating them are as follows: 
\begin{eqnarray}
N_1 & = & 
\left( \begin{array}{cc}
\frac{\Gamma (-c) \Gamma (a+b-c+2)}{\Gamma (a-c) \Gamma (b-c+2)} &
\frac{\Gamma(-c )\Gamma ( -a -b +c -2 ) }{\Gamma (- a) \Gamma (- b -2 )}
\\
\frac{\Gamma(2 + c) \Gamma(a+b-c+2 )}{\Gamma(a+1) \Gamma(b+3)} &
\frac{\Gamma(2 + c) \Gamma(-a -b +c -2)}{\Gamma( -a +c +1)\Gamma( -b + c -1)}
\end{array}
\right)
\nonumber \\
N_\infty & = & 
\left( \begin{array}{cc}
e^{ i \pi a} 
\frac{\Gamma ( -c) \Gamma (a - b - 2)}{\Gamma (a-c ) \Gamma (- b -2)} &
e^{ i \pi b}
\frac{\Gamma( - c)\Gamma ( -a + b+2 ) }{\Gamma (b -c +2) \Gamma (-a)}
\\
e^{i \pi ( a-c -1)} 
\frac{\Gamma(2 + c) \Gamma( a -b -2)}{\Gamma( - b +c  -1) \Gamma(1+a )} &
e^{i \pi ( b - c + 1)} 
\frac{\Gamma(2 + c) \Gamma( -a + b+2 )}{\Gamma( - a + c+1)\Gamma(b +3)}
\end{array}
\right)
\nonumber 
\end{eqnarray}
so that 
$\Psi_0 = N_1 \Psi_1$ and $\Psi_\infty = N_\infty \Psi_\infty$. 
Let
\begin{eqnarray}
Q & = & 
\left( \begin{array}{cc}
1 & 0 
\\
0 & 
\frac{a b (b+1) (b+2) (a-c) }{ (c-1) c^2 (c+1) (c-b -1) }
\end{array}
\right). 
\nonumber
\end{eqnarray}
The left-hand-side of (3.20) is 
$\Phi_0^t Q \Psi_0$, 
call it $f(t)$. 
$f(t)$ is a single-valued holomorphic function in the disk $|t| <1$. 
When it is analytically continued to a punctured neighborhood of $1$, 
we get a possibly multi-valued function. 
Denote it again by $f$; 
$f$ may have non-trivial monodromy around $1$. 
Since 
$\Phi_0 = M_1 \Phi_1$ and $\Psi_0 = N_1 \Psi_1$, 
we have 
\begin{eqnarray}
f(t) & = & \Phi_1^t M_1^t Q N_1 \Psi_1. 
\nonumber 
\end{eqnarray}
The $(1, 1)$-th entry of the matrix $Q_1 = M_1' Q N_1$ is 
\begin{eqnarray}
& & \frac{\Gamma (c) \Gamma (c-a-b)}{\Gamma (c-a) \Gamma (c-b)} 
\frac{\Gamma (-c) \Gamma (a+b-c+2)}{\Gamma (a-c) \Gamma (b-c+2)} 
 \\
& + & 
\frac{a b (b+1) (b+2) (a-c) }{ (c-1) c^2 (c+1) (c-b -1) }
\frac{\Gamma(2-c) \Gamma(c-a-b)}{\Gamma(1-a) \Gamma(1-b)} 
\frac{\Gamma(2 + c) \Gamma(a+b-c+2 )}{\Gamma(a+1) \Gamma(b+3)}. 
\nonumber
\end{eqnarray}
Since 
\begin{eqnarray}
\Gamma(z +1) = z \Gamma(z), & & 
\Gamma (z) \Gamma (1-z) = \frac{\pi}{\sin \pi z}, 
\nonumber 
\end{eqnarray}
(5.2) is simplified to 
\begin{eqnarray}
& & \frac{ (a+b -c+1) (a-c) }{ c (c-b-1)} 
\frac{ \sin \pi (a-c) \sin \pi (c-b) + \sin \pi a \sin \pi b} {\sin \pi c \sin \pi (a+b-c) }. 
\nonumber
\end{eqnarray}
Furthermore, 
using the identity 
\begin{eqnarray}
\sin (\alpha - \gamma) \sin (\gamma - \beta) + \sin \alpha \sin \beta 
& = &
\sin \gamma \sin (\alpha + \beta - \gamma ), 
\nonumber 
\end{eqnarray}
we get 
\begin{eqnarray}
Q_1 [1, 1]  & = &  \frac{ (a+b -c+1) (a-c) }{ c (c-b-1)}. 
\nonumber 
\end{eqnarray}
Similar calculations show that 
\begin{eqnarray}
Q_1 [1, 2]  & = & Q_1 [2, 1] = 0
\nonumber \\
Q_1 [ 2, 2] & = & \frac{a b(b+1)(b+2) }{c (a+b-c) (a+b-c+1)(a+b-c+2)}. 
\nonumber 
\end{eqnarray}
Hence, 
\begin{eqnarray}
f(t) & = & 
\frac{ (a+b -c+1) (a-c) }{ c (c-b-1)} \Phi_1' 
\left( \begin{array}{cc}
1 & 0 \\
0 & \frac{a b (b+1)(b+2) (c-b-1) }{(a+b-c)(a+b-c+1)^2 (a+b-c+2) (a-c)}
\end{array}
\right)
\Psi_1. 
\nonumber 
\end{eqnarray}
In particular,  
$f$ is single-valued and holomorphic in a neighborhood of 1. 
Hence, 
$f$ is an entire function on the complex plane. 
Around the point $\infty$, 
we can again check that 
\begin{eqnarray}
f(t) & = & 
\Phi_\infty^t M_\infty^t  Q  N_\infty \Psi_\infty
\nonumber 
\end{eqnarray}
where 
\begin{eqnarray} 
Q_\infty & = & M_\infty^t Q  N_\infty 
= \left( \begin{array}{cc}
\frac{b (b+1)(b+2) (a-c) }{c ( a- b) (a-b-1) (a-b-2) } & 0 \\
0 &  \frac{a (a-b-1) }{c(c-b-1)} 
\end{array}
\right).
\nonumber 
\end{eqnarray}
Hence, 
$f$ has a pole of order $2$ at $\infty$. 
This implies that $f(t)$ is a polynomial of degree $2$; 
the (first) three coefficients are computed directly. 
\hfill 
$\square$

\medskip

\noindent {\bf \emph{Proof of Lemma \ref{hypergeom2}:}} 
Consider the function 
$f_1(t) = ( \frac{\partial \Phi_0}{\partial t} )' Q \Psi_0$. 
It is clear that $f_1$ is a (single-valued) holomorphic function in the disk $|t| <1$. 
By analytic continuation, 
near the point $1$,
$f_1 = (\frac{ \partial \Phi_1}{\partial t} )' Q_1 \Psi_1$ 
is also single-valued and holomorphic. 
At the point $\infty$, 
$f_1 =  (\frac{ \partial \Phi_\infty}{\partial t} )' Q_\infty \Psi_\infty$
has a pole of order 1. 
Hence, 
$f_1$ is a polynomial of degree 1. 
Explicit calculation shows that 
\begin{eqnarray}
f_1 & = & 
\frac{ab}{c} \left( 1 - \frac{a-b-1}{c-b-1} t \right). 
\nonumber 
\end{eqnarray}
The left-hand side of (3.34) is now
$ \frac{t}{c-1} f_1 + f$. 
\hfill 
$\square$

\medskip 

\noindent {\bf \emph{Proof of Lemma \ref{hypergeom3}:}} 
The function $f_2(t) = (\frac{ \partial^2 \Phi_0}{\partial t^2} )' Q \Psi_0$
is holomorphic at $0$, $1$, and $\infty$; 
hence it is a constant. 
Explicitly, 
\begin{eqnarray} 
f_2(t) & = & 
\frac{ ab(b+1) (a-b-1) }{c (c-b-1) }. 
\nonumber 
\end{eqnarray} 
The left-hand side of (3.38) is now  
\begin{eqnarray} 
& & \frac{t^2}{(c-2)(c-1)} f_2 + \frac{2t}{c-2} f_1 + f.  
\nonumber 
\end{eqnarray} 

In general, 
we can consider the function
\begin{eqnarray} 
f_n & = & 
( \frac{ \partial^n \Phi_0 }{\partial t^n} )' Q \Psi_0. 
\nonumber
\end{eqnarray} 
When $n \geq 3$, 
$f_n$ has a pole of order $n-2$ at $0$, 
a pole of order $n-2$ at $1$, 
and a zero of order $n-2$ at $\infty$. 
Hence, 
$f_n$ is of the form 
$$
\frac{A_{n-2} }{t^{n-2} } + \cdots + \frac{A_1}{t} + \frac{B_{n-2} }{(1-t)^{n-2} } + \cdots + \frac{B_1}{1-t}.
$$
This gives a family of identities of hypergeometric series. 
We write down two more of them. 

\begin{lemma}
\begin{eqnarray}
{}_2F_1(a, b, c, t) {}_2F_1(-a, -b-2, -c, t)  & & \\
+ \frac{3ab}{c(c-3)}t {}_2F_1(a+1, b+1, c+1, t) {}_2F_1(-a, -b-2, -c, t) & & \nonumber \\
+ \frac{3(a)_2(b)_2}{(c)_2(c-3)_2} t^2 {}_2F_1(a+2, b+2, c+2, t) {}_2F_1(-a, -b-2, -c, t) & & \nonumber \\
+ \frac{(a)_3(b)_3}{(c-3)_6} t^3  {}_2F_1(a+3, b+3, c+3, t) {}_2F_1(-a, -b-2, -c, t) & & \nonumber \\
+\frac{a(b)_3(a-c)_4 (c-b-3)_2}{(c-4)_5(c-3)_5} t^5 
 {}_2F_1(a-c+4, b-c+4, 5-c, t) {}_2F_1(-a+c+1, -b+c-1, c+2, t) & & \nonumber \\
= 1 + \frac{a(3b-2c+6)}{c(c-3)} t + \frac{a(a-b-1)(c-b-3)_2}{(c-3)_4} t^2 + \frac{a(b)_3}{(c-3)_4} \frac{t^2}{1-t} \nonumber
\end{eqnarray}
\end{lemma}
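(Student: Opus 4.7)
The plan is to proceed in direct parallel with the proofs of Lemmas \ref{hypergeom1}, \ref{hypergeom2}, \ref{hypergeom3}, one derivative higher. Set
\begin{equation*}
f_3(t) \;=\; (\partial_t^3 \Phi_0)^t\, Q\, \Psi_0
\end{equation*}
in the notation introduced in the proof of Lemma \ref{hypergeom1}. I will show that the left-hand side of the identity equals
\begin{equation*}
f \;+\; \frac{3t}{c-3}\,f_1 \;+\; \frac{3t^2}{(c-3)(c-2)}\,f_2 \;+\; \frac{t^3}{(c-3)(c-2)(c-1)}\,f_3,
\end{equation*}
which is the $n=3$ instance of the pattern $\sum_{k=0}^{n}\binom{n}{k}\,\tfrac{t^k}{(c-n)_k}\,f_k$ already visible in the three earlier lemmas. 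The first four summands of the identity record the contribution of $\Phi_0[1] = {}_2F_1(a,b,c;t)$, using $(\Phi_0[1])^{(k)} = \tfrac{(a)_k(b)_k}{(c)_k}\,{}_2F_1(a+k,b+k,c+k;t)$. The fifth (dual) summand records the contribution of $\Phi_0[2] = t^{1-c} G(t)$ with $G = {}_2F_1(a-c+1,b-c+1,2-c;t)$, via the Leibniz telescoping
\begin{equation*}
\sum_{k=0}^{3}\binom{3}{k}\,\frac{t^k}{(c-3)_k}\,(\Phi_0[2])^{(k)} \;=\; \frac{t^{4-c}}{(c-3)_3}\, G^{(3)}(t),
\end{equation*}
after multiplication by $Q[2,2]\,\Psi_0[2]$ and expansion of $G^{(3)}$ in terms of ${}_2F_1(a-c+4,b-c+4,5-c;t)$.

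The only new ingredient is to determine $f_3$. Near $t=0$, the $\Phi_0[1]$-sector is analytic, while
\begin{equation*}
(\Phi_0[2])^{(3)}\,\Psi_0[2] \;=\; -(c-1)c(c+1)\,t^{-1}\,G(t)H(t) + O(1),
\end{equation*}
with $H(t) = {}_2F_1(-a+c+1,-b+c-1,2+c;t)$, giving a simple pole at $t=0$. Near $t=1$, changing basis via $\Phi_0 = M_1\Phi_1$, $\Psi_0 = N_1\Psi_1$ and using the diagonal matrix $Q_1 = M_1^t Q N_1$ from the proof of Lemma \ref{hypergeom1}, the analogous computation in the variable $1-t$ gives another simple pole. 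Near $t=\infty$, using $\Phi_0 = M_\infty \Phi_\infty$, $\Psi_0 = N_\infty \Psi_\infty$ with diagonal $Q_\infty$, the function $f_3$ decays like $t^{-1}$. Consequently $f_3(t) = \tfrac{A}{t} + \tfrac{B}{1-t}$, and extracting residues at $0$ and at $1$ yields
\begin{equation*}
A \;=\; -\frac{a(b)_3(a-c)}{c(c-b-1)}, \qquad B \;=\; \frac{a(b)_3}{c}.
\end{equation*}
The $t^{-1}$ coefficient at infinity provides the consistency check $A - B = -\tfrac{a(a-b-1)(b)_3}{c(c-b-1)}$, matching $Q_\infty[2,2]\cdot(-(b)_3)$ from the direct asymptotic analysis.

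To finish, substitute these formulas together with $f$, $f_1 = \tfrac{ab}{c}\bigl(1 - \tfrac{a-b-1}{c-b-1}\,t\bigr)$, and $f_2 = \tfrac{ab(b+1)(a-b-1)}{c(c-b-1)}$ from the earlier proofs, and use $\tfrac{t^3}{1-t} = \tfrac{1}{1-t} - 1 - t - t^2$ to separate the pole at $t=1$. The singular piece $\tfrac{B}{(c-3)_3}\cdot\tfrac{1}{1-t} = \tfrac{a(b)_3}{(c-3)_4}\cdot\tfrac{1}{1-t}$ assembles together with the residual polynomial into the $\tfrac{a(b)_3}{(c-3)_4}\cdot\tfrac{t^2}{1-t}$ summand of the right-hand side, while the remaining polynomial contributions collect, by powers of $t$, into $1$, $\tfrac{a(3b-2c+6)}{c(c-3)}\,t$, and $\tfrac{a(a-b-1)(c-b-3)_2}{(c-3)_4}\,t^2$. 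The hard part is not conceptual but algebraic: verifying the $t^2$ coefficient requires combining four rational expressions over the common denominator $c(c-b-1)(c-3)(c-2)(c-1)$ and recognising that the resulting cubic polynomial in $c$ factors as $(c-b-1)\bigl[(a-b-1)(c-b-2)(c-b-3) + (b)_3\bigr]$. As in the three preceding lemmas, the three-singularity Fuchsian analysis provides the structure of the answer for free, and only this final polynomial identity needs to be checked by direct expansion.
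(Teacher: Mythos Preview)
Your proposal is correct and follows exactly the approach the paper sketches: the paper explains that for $n\geq 3$ the function $f_n = (\partial_t^n\Phi_0)^t Q\,\Psi_0$ has poles of order $n-2$ at $0$ and $1$ and a zero of order $n-2$ at $\infty$, hence the form $\sum A_i t^{-i} + \sum B_i (1-t)^{-i}$, and then simply records the resulting identity for $n=3$ without further detail. Your write-up is precisely the detailed execution of this for $n=3$, including the Leibniz telescoping that produces the dual summand, the computation of the residues $A$ and $B$, and the final polynomial bookkeeping; nothing is missing.
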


\begin{lemma}
\begin{eqnarray}
{}_2F_1(a, b, c, t) {}_2F_1(-a, -b-2, -c, t)  & & \\
+ \frac{4ab}{c(c-4)}t {}_2F_1(a+1, b+1, c+1, t) {}_2F_1(-a, -b-2, -c, t) & & \nonumber \\
+ \frac{6(a)_2(b)_2}{(c)_2(c-4)_2} t^2 {}_2F_1(a+2, b+2, c+2, t) {}_2F_1(-a, -b-2, -c, t) & & \nonumber \\
+ \frac{4(a)_3(b)_3}{(c)_3(c-4)_3} t^3  {}_2F_1(a+3, b+3, c+3, t) {}_2F_1(-a, -b-2, -c, t) & & \nonumber \\
+ \frac{(a)_4(b)_4}{(c-4)_8} t^4  {}_2F_1(a+4, b+4, c+4, t) {}_2F_1(-a, -b-2, -c, t) & & \nonumber \\
+\frac{a(b)_3(a-c)_5 (c-b-4)_3}{(c-5)_6(c-4)_6} t^6 
 {}_2F_1(a-c+5, b-c+5, 6-c, t) {}_2F_1(-a+c+1, -b+c-1, c+2, t) & & \nonumber \\
= 1 + \frac{a(4b-2c+8)}{c(c-4)} t + \frac{a(a-b-1)(c-b-4)_3}{(c-4)_5} t^2 & & \nonumber  \\
+ \frac{a(b)_3(a-b+3c-7)}{(c-4)_5} \frac{t^2}{1-t} 
+ \frac{a(b)_3(a +b -c+3)}{(c-4)_5} \frac{t^3}{(1-t)^2} & & \nonumber
\end{eqnarray}
\end{lemma}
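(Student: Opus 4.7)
The plan is to extend the template from the proofs of Lemmas~\ref{hypergeom1}, \ref{hypergeom2}, and \ref{hypergeom3} to the case $n=4$. Define $f_k(t) = (\partial_t^k \Phi_0)^T Q \Psi_0$, where $\Phi_0, \Psi_0$ are the fundamental local solutions at $t=0$ of the hypergeometric equation and its dual, and $Q$ is the diagonal matrix from the proof of Lemma~\ref{hypergeom1}. The discussion after Lemma~\ref{hypergeom3} guarantees that for $n \geq 3$ the function $f_n$ is rational with poles of order at most $n-2$ at $t=0,1$ and a zero of order at least $n-2$ at $t=\infty$; hence
$$f_4(t) = \frac{A_2}{t^2} + \frac{A_1}{t} + \frac{B_2}{(1-t)^2} + \frac{B_1}{1-t}$$
for constants $A_1, A_2, B_1, B_2$. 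I then claim the LHS of the lemma equals $\sum_{k=0}^{4} \binom{4}{k} \frac{t^k}{(c-4)_k} f_k(t)$: since $Q_{11}=1$, the first-component contribution of each $f_k$ reproduces the $k$-th of the first five terms on the LHS via the hypergeometric derivative formula, while the second-component contribution collapses to the sixth (``last'') term via the auxiliary identity
$$\sum_{k=0}^{N} \binom{N}{k} \frac{t^k}{(c-N)_k}\, \partial_t^k\!\left[t^{1-c}\, {}_2F_1(\alpha,\beta,\delta;t)\right] = \frac{(\alpha)_N(\beta)_N}{(c-N)_N(\delta)_N}\, t^{N+1-c}\, {}_2F_1(\alpha+N,\beta+N,\delta+N;t),$$
which I would prove by matching Taylor coefficients and invoking Chu--Vandermonde: the inner sum evaluates to ${}_2F_1(-N,c-n-1;c-N;1) = (n+1-N)_N/(c-N)_N$, automatically killing the would-be singular contributions for $0 \leq n < N$. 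With $N=4$ and $(\alpha,\beta,\delta)=(a-c+1,b-c+1,2-c)$, the coefficient match reduces to the elementary identity $(b-c+1)_4/(c-b-1) = (c-b-4)_3$.

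With the LHS rewritten in this form, I substitute the explicit rational expressions for $f_0, f_1, f_2$ from the proofs of Lemmas~\ref{hypergeom1}--\ref{hypergeom3}, together with partial-fraction forms for $f_3$ (simple poles at $0$ and $1$) and $f_4$ as above. The combination then has a polynomial part plus poles only at $t=1$: the prefactors $t^k$ clear the poles at $t=0$ of orders $k-2$ in each $f_k$, while the poles at $t=1$ must be re-expressed in the basis $\{1, t, t^2, t^2/(1-t), t^3/(1-t)^2\}$ appearing on the RHS, using standard rearrangements such as $t^4/(1-t)^2 = t^2 - t^2/(1-t) + t^3/(1-t)^2$.

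The main obstacle is the explicit determination of the residues $A_1, A_2, B_1, B_2$ (and the analogous constants for $f_3$). The $A$'s follow from the local Frobenius expansion of $\Phi_0$ and $\Psi_0$ at $t=0$. The $B$'s require analytic continuation via $\Phi_0 = M_1\Phi_1$, $\Psi_0 = N_1\Psi_1$, and computation of the leading terms of $(\partial_t^4\Phi_1)^T (M_1^T Q N_1) \Psi_1$ near $t=1$. The resulting products of gamma functions simplify, exactly as in the proof of Lemma~\ref{hypergeom1}, via the identity $\sin(\alpha-\gamma)\sin(\gamma-\beta) + \sin\alpha\sin\beta = \sin\gamma\sin(\alpha+\beta-\gamma)$, which eliminates all transcendental factors. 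This step is mechanical but lengthy, and the principal consistency check is that the resulting pole coefficients reproduce $\frac{a(b)_3(a-b+3c-7)}{(c-4)_5}$ and $\frac{a(b)_3(a+b-c+3)}{(c-4)_5}$ on the RHS.
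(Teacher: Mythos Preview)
Your proposal is correct and follows exactly the template the paper sets up: the paper does not prove this lemma separately, but the discussion after the proof of Lemma~\ref{hypergeom3} makes clear that the intended argument is precisely to write the left-hand side as $\sum_{k=0}^{4}\binom{4}{k}\frac{t^k}{(c-4)_k}f_k(t)$ and invoke the already-established rational form of each $f_k$. Your auxiliary Chu--Vandermonde identity for collapsing the second-component contributions to a single $t^{6}$ term is a nice explicit justification of a step the paper leaves implicit, and your bookkeeping of the Pochhammer factors (in particular $(b-c+1)_4/(c-b-1)=(c-b-4)_3$ and $(c-4)_4(c)_4=(c-4)_8$) checks out.
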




\end{document}